\title{Bootstrapping the error of Oja's algorithm}
\author{Robert Lunde  \\ University of Michigan \\ \texttt{rlunde@umich.edu} 
        \and 
        \textbf{Purnamrita Sarkar}
        \\  University of Texas at Austin \\ \texttt{purna.sarkar@austin.utexas.edu}   
        \and 
        \\ 
        \textbf{Rachel Ward} \\ University of Texas at Austin \\ \texttt{rward@math.utexas.edu}}
\newcommand{\hd}{Hoeffding decomposition\xspace}
\newcommand{\bk}{\color{black}}
\newcommand{\trace}{\mathrm{trace}}
\newtheorem{remark}{Remark}
\newtheorem{theorem}{Theorem}
\newtheorem{lemma}{Lemma}
\newtheorem{proposition}{Proposition}
\newtheorem{corollary}{Corollary}
\newcommand{\bb}[1]{\left(#1\right)}
\newcommand{\bas}[1]{\begin{align*}#1\end{align*}}
\newcommand{\ba}[1]{\begin{align}#1\end{align}}
\newcommand{\E}{\mathbb{E}}
\newcommand{\var}{\mathrm{Var}}
\newcommand{\oja}{\hat{v}_1}
\newcommand{\norm}[1]{\left\lVert#1\right\rVert}
\newcommand{\vorth}{V_\perp}
\newcommand{\bM}{\mathbb{M}}
\newcommand{\vp}{\widehat{V}_\perp}
\newtheorem*{theorem*}{Theorem}
\newcommand{\bigvar}{\bar{\mathbb{V}}_n}
\newcommand{\sgn}{\text{sgn}}
\begin{document}

\maketitle

\begin{abstract}
 We consider the problem of quantifying uncertainty for the estimation error of the leading eigenvector from Oja's algorithm for streaming principal component analysis, where the data are generated IID from some unknown distribution.  By combining classical tools from the U-statistics literature with recent results on high-dimensional central limit theorems for quadratic forms of random vectors and concentration of matrix products, we establish a weighted $\chi^2$ approximation result for the $\sin^2$ error between the population eigenvector and the output of Oja's algorithm.
Since estimating the covariance matrix associated with the approximating distribution requires knowledge of unknown model parameters, we propose a multiplier bootstrap algorithm that may be updated in an online manner.  We establish conditions under which the bootstrap distribution is close to the corresponding sampling distribution with high probability, thereby establishing the bootstrap as a consistent inferential method in an appropriate asymptotic regime.       
\end{abstract}
\section{Introduction}
Since its discovery over a century ago \cite{pearson-pca}, principal component analysis (PCA) has been a cornerstone of data analysis.  In many applications, dimension reduction is paramount and PCA offers an optimal low-rank approximation of the original data.  PCA is also highly interpretable as it projects the dataset onto the directions that capture the most variance known as principal components. 

Important applications of PCA include image and document analysis, where the largest few principal components may be used to compress a large dimensional dataset to a manageable size without incurring much loss; for a discussion of some other applications of PCA, see for example, \cite{joliffe-pca}. In these settings, the original dimensionality, which could be the number of pixels in an image or the vocabulary size after removing stop-words,  is in the tens of thousands. An offline computation of the principal components would require the computation of eigenvectors of the sample covariance matrix. However, in high-dimensional settings, storing the covariance matrix and subsequent eigen-analysis can be challenging. Streaming PCA methods have gained significant traction owing to their ability to iteratively update the principal components by considering one data-point at a time.


\
One of the most widely used algorithms for streaming PCA is Oja's algorithm, proposed in the seminal work of~\cite{oja-simplified-neuron-model-1982}. 
   Oja's algorithm involves the following update rule:
\ba{\label{eq:oja}
w_{t+1}-w_t= \eta (w_t^T X_t) X_t; \qquad w_{t+1}^T w_{t+1}=1,
}
where $X_t\in \mathbb{R}^d$ is the $t^{th}$ data point and $w_t$ is the current estimate for the leading eigenvector of $\Sigma = \mathbb{E} X X^T$  after $t$ data-points have been seen.
 The parameter $\eta$ can be thought of as a learning rate, which can either be fixed or varied as a function of $t$. In this paper we fix the learning rate, similar to~\cite{jain2009}.

\noindent {\bf Contribution:}
 In the present work, we consider the problem of uncertainty quantification for the estimation error of the leading eigenvector from Oja's algorithm, which is one of the most commonly used streaming PCA algorithms. Our contributions may be summarized as follows:
 \begin{enumerate}
     \item  We derive a high-dimensional weighted $\chi^2$ approximation to the $\sin^2$ error for the leading eigenvector of Oja's algorithm.  We recover the optimal convergence rate $O(1/n)$ while allowing $d$ to grow at a sub-exponential rate under suitable structural assumptions on the covariance matrix, matching state-of-the-art theoretical results for consistency of Oja's algorithm.  Our result provides a distributional characterization of the $\sin^2$ error for Oja's algorithm for the first time in the literature.  The approximation holds for a wide range of step sizes.           
     \item Since the weighted $\chi^2$ approximation depends on unknown parameters, we propose an online bootstrap algorithm and establish conditions under which the bootstrap is consistent.  Our bootstrap procedure allows the approximation of important quantities such as the quantiles of the error associated with Oja's algorithm for the first time.

 \end{enumerate}
\noindent {\bf Prior analysis of Oja's algorithm.} While Oja's algorithm was invented in 1982 
it was not until recently that the theoretical workings of Oja's algorithm have been understood.  A number of papers in recent years have focused on proving guarantees of convergence of the iterative update in \eqref{eq:oja} toward the principal eigenvector of the (unknown) covariance matrix  $\E XX^T$, which can be recast as stochastic gradient descent (SGD) on the quadratic objective function
\ba{\label{eq:ojaobj}
	\min_{\substack{w\\w^Tw=1}} -\trace(w^T \Sigma w), \quad \quad \Sigma=\E XX^T,
}
projected onto the non-convex unit sphere.  We assume that the data-points are mean zero.
Despite being non-convex and thus falling outside the framework for which theory for stochastic gradient descent convergence is firmly established, the output of Oja's algorithm be viewed as a product of random matrices and shares similar structure to other important classes of non-convex problems, such as matrix completion~\cite{jain2013matcompl,keshavan2010completion}, matrix sensing~\cite{jain2013matcompl}, and subspace tracking~\cite{balzano2010sstracking}.
Thus, studying this optimization problem serves as a natural first step toward understanding the behavior of SGD in more general non-convex settings.  

%

Let $v_1$ denote the principal eigenvector of $\Sigma$, and let $\oja = w_n$ be the solution to the stochastic iterative method applying Eq~\ref{eq:oja}. Finally, let $\lambda_1 > \lambda_2$ be the first and second principal eigenvalues of $\Sigma$.   \bk Sharp rates of convergence for Oja's updates were established in ~\cite{jain2016streaming}. Under boundedness assumptions on $\|X_iX_i^T-\Sigma\|$, they show that with constant probability, the square of the sine of the angle between $v_1$ and $w$ satisfies:
\ba{\label{eq:jainoja}
1-(v_1^T \oja)^2=O\bb{\frac{1}{n}}
}
where the $O$ hides a constant which depends in the optimal way on the eigengap between the top two eigenvalues, and independent of $n$ or $d$, improving on previous error bounds for Oja's algorithm ~\cite{desa2014alecton,hardt2014noisy,bal2013pcastreaming,shamir2016pca,mitliagkas2013pca,pmlr-v49-balcan16a} which showed convergence rates that deteriorate with the ambient dimension $d$, and thus did not fully explain the efficiency of Oja's update. 
This sharp rate is remarkable, as it matches the error of the principal eigenvector of the sample covariance matrix, which is the batch or offline version of PCA. 
Other notable work include~\cite{Liu2018oja,liang2021optimality} for unbounded $X_i$, analysis of Oja's algorithm for computing top $k$ principal components~\cite{ALOW2017,huang2021streaming}.

 

\noindent {\bf The bootstrap.}
The bootstrap, proposed by \cite{efron1986},  
is one of the most widely used methods for uncertainty quantification in machine learning and statistics and accordingly has a vast literature. We refer the reader to \cite{hall-edgeworth, vandervaart-asymptotic}  for expositions on the classical theory of the bootstrap for IID data. Recently, since the groundbreaking work of \cite{cck2013,chernozhukov2017}, the bootstrap has seen a renewed surge of interest in the context of high-dimensional data where $d$ can be  potentially exponentially larger than $n$.  Of particular relevance to the present work are high-dimensional central limit theorems (CLTs) for quadratic forms, which have been studied by \cite{10.1214/15-EJS1090, 10.1093/biomet/asz020, giessing2020bootstrapping}.  In particular, our CLT for the estimation error of Oja's algorithm invokes a modest adaptation of \cite{10.1093/biomet/asz020} to independent but non-identically distributed random variables.    
%
In machine learning, bootstrap methods have been used to estimate the uncertainty of randomized algorithms such as bagging and random forests~\cite{lopes2019bagging}, sketching for large scale singular value decomposition (SVD)~\cite{lopes2020svd}, randomized matrix multiplication~\cite{lopes2019matmult}, and randomized least squares~\cite{pmlr-v80-lopes18a}.    

A standard notion of bootstrap consistency is that, conditioned on the data, the distribution of the suitably centered and scaled bootstrap functional approaches the true distribution with high probability in some norm on probability measures, typically the Kolmogorov distance, which is the supremum of the absolute pointwise difference between two CDFs. 
Bootstrap consistency is often established by deriving a Gaussian approximation for the sampling distribution and showing that the bootstrap distribution is close to the corresponding Gaussian approximation with high probability.

It may seem that if one knows that the approximating distribution of a statistic is Gaussian, this defeats the purpose of bootstrap. However, \emph{for most statistics, the parameters of the normal approximation depend on unknown model parameters, and have to be estimated if one intends to use the normal approximation}. Furthermore, the CLT only gives a \emph{first-order} correct approximation of the target distribution, i.e. with $O(1/\sqrt{n})$ error. In contrast, the bootstrap of a suitably centered and scaled statistic has been shown to be higher order correct for many functionals~\cite{gotze1996,hall-edgeworth,helmers1991}. 


\noindent {\bf Quantifying uncertainty for SGD.}
Behind the recent success of neural networks in a wide range of sub-fields of machine learning, the workhorse algorithm has become Stochastic gradient descent (SGD)~\cite{polyak1992sgd,nemirovsky2009,robbins1951}.  
For establishing consistency of bootstrap, one requires to establish asymptotic normality~\cite{fabian1968,polyak1992sgd,Ruppert1988EfficientEF,bach2011sgd}. There has also been many works on uncertainty estimation of SGD~\cite{chen2020SGD,Li2017SGDaistats,fang2018boot,su2018uncertainty}. 
However, \emph{all these works are for convex, and predominantly strongly convex loss functions.}  Only recently, ~\cite{yu2020nonconvexAnalysis} has established asymptotic normality for nonconvex loss functions under dissipativity conditions and appropriate growth conditions on the gradient, which are weaker conditions than strong convexity but not significantly so. 

Now, in Section~\ref{sec:notation} we present notation and do setup, present our main theoretical results in Section~\ref{sec:main}, followed by simulations in Section~\ref{sec:exp}.

\section{Preliminaries}\label{sec:notation}
We consider a row-wise IID triangular array, where the random vectors $\{X_i\}$ in the $n^{th}$ row take values in $\mathbb{R}^{d_n}$, with $\mathbb{E}[X_i] = 0$ and $\mathrm{Var}(X_i) = \Sigma_{n}$. Note that the triangular array allows $\{X_1,\dots,X_n\}$ to come from a different distribution for each $n$ and the setting where $d$ is fixed and $n$ grows is a special case. For readability, we drop the subscript $n$ from $\Sigma_n$.  We use $\| \cdot \|$ to denote the Euclidean norm for vectors and the operator norm for matrices and $\| \cdot \|_F$ to denote the Frobenius norm.   
\bk


Expanding out the recursive definition in Eq~\ref{eq:oja}, we see that Oja's iteration can be expressed as $w_{t+1}=(I_d+\eta X_tX_t^T) w_t$. Thus, after $n$ iterations the vector can be written as a matrix-vector product, where the matrix is a product of $n$ independent matrices. 
Expanding out the recursive definition, we get:
\ba{\label{eq:ojamatprod}
	B_n:=\prod_{i=1}^n (I_d+\eta  X_iX_i^T)\qquad \oja=\frac{B_nu_0}{\|B_n u_0\|},
}
where $I_d$ is a $d\times d$ identity matrix.
where $u_0$ is a random unit vector in $d$ dimensions. In the scalar case, when $\eta=1/n$, for large $n$, the numerator of Eq~\ref{eq:ojamatprod} behaves like $\exp(\sum_i X_i^2/n)$, which in turn converges to $\exp(E[X_1^2])$.  For matrices, one hopes that, by independence, a result of the same flavor will hold. 
And in fact if it does hold, then for $\eta=\frac{\log n}{n}$, the numerator in Eq~\ref{eq:ojamatprod} will concentrate around $\exp(\log n \Sigma)$. The spectrum of this matrix is dominated by the principal eigenvector, i.e. the ratio of the first eigenvalue to the second one is $\exp(\log n(\lambda_1-\lambda_2))$, where $\lambda_i$ is the $i^{th}$ eigenvalue of the covariance matrix $\Sigma$. This makes it clear that \emph{Oja's algorithm is essentially a matrix vector product of this matrix exponential (suitably scaled) and a random unit vector.}

However, the intuition from the scalar case  is nontrivial to generalize to matrices due to non-commutativity. Limits of products of random matrices have been studied in mathematics in the context of ergodic theory on Markov chains (see~\cite{FURMAN2002931,ledrappier2001asymptotic,randwalk-groups,emme2017matprod} etc.). However, until recent results of \cite{huang2020matrix}, which extended and improved results in  \cite{henriksen2019matprod}, there has not been much work on quantifying the exact \emph{rate of convergence}, or finite-sample large deviation bounds for how a random matrix product deviates from its expectation.

We reparametrize $\eta$ as $\eta_n/n$, where $\eta_n$ is chosen carefully to obtain a suitable error rate. Note that this is not a scheme where we decrease $\eta$ over time as in~\cite{henriksen2019adaoja}, but hold it as a constant which is a function of the total number of data-points. \bk


\subsection{The Hoeffding decomposition}
The Hoeffding decomposition, attributed to \cite{hoeffding1948}, is a key technical tool for studying the asymptotic properties of U-statistics. However, the idea generalizes far beyond U-statistics; see Supplement Section \ref{sec:HD} for further discussion.  In the present work, we use Hoeffding decompositions for matrix and vector-valued functions of independent random variables taking values in $\mathbb{R}^d$ to facilitate analysis for $B_n$. 

A concept closely related to the Hoeffding decomposition is the more well-known H\'{a}jek projection,  which gives the best approximation (in an $L_2$ sense) of a general function of $n$ independent random variables by a function of the form $\sum_i g_i(X_i)$, where $g_i$ are measurable functions satisfying a square integrability condition. The H\'{a}jek projection facilitates distributional approximations for complicated statistics since this linear projection is typically more amenable to analysis. However, establishing a central limit theorem requires showing the negligibility of a remainder term, which can be large if the projection is not accurate enough.     

The H\'{a}jek projection may be viewed as the first-order term in the Hoeffding decomposition, a general way of representing functions of independent random variables.  The Hoeffding decomposition consists of a sum of projections onto a linear space, quadratic space, cubic space, and so on. Each new space is chosen to be orthogonal to the previous space. Thus, the \hd can be thought of as a sum of terms of increasing levels of complexity.  Even if the remainder of the H\'{a}jek projection turns out to be small, the Hoeffding decomposition can be easier to work with due to the orthogonality of the projections.

\bigskip

\noindent {\bf The Hoeffding decomposition for the matrix product.}
Let  $Y_i = X_iX_i^T- \Sigma$ and let $S \subseteq \{1, \ldots n\}$.  By Corollary \ref{cor-hoeff-decomp-oja} of the Supplement Section \ref{sec:HD},  the Hoeffding Decomposition for $B_n$ is given by:
\begin{align}
\label{eq:hayek-proj-decomp}
B_n = \sum_{k=0}^n T_k,\qquad T_k =  
\sum_{|S|=k} H^{(S)}.
\end{align}
%
%
%
where  $H^{(S)} = \prod_{i=1}^n A_i^{(S)}$ and  $ A_i^{(S)}$ is given by:   
$\begin{aligned}
A_{i}^{(S)} &= \begin{cases}
\frac{\eta_n}{n} Y_i & \text{if} \ i \in S  \\ 
I+\frac{\eta_n}{n} \Sigma & \text{otherwise}
\end{cases}
\end{aligned}$.
%
%
 
 The above expansion has favorable properties that facilitate second-moment calculations.  In fact, as a consequence of the orthogonality property of Hoeffding projections, we have that
 \begin{align*}
 \mathbb{E}\left[\norm{B_n}_F^2 \right] &= \sum_{k=0}^n \sum_{|S| = k } \mathbb{E}\left[\|\prod_{i=1}^n H_i^{(S)}\|_F^2\right]\\  \mathbb{E}\left[ \|B_n x\|^2 \right] &= \sum_{k=0}^n \sum_{|S| = k} \mathbb{E}\left[\|\prod_{i=1}^n H_i^{(S)}x\|^2\right]  
 \end{align*}
 where the second statement holds for any $x \in \mathbb{R}^d$; see Proposition \ref{prop:orth-hoeffding-proj} in Supplement Section \ref{sec:HD}. \\  
%
%

%

%
\subsection{Online bootstrap for streaming PCA}


To approximate the sampling distribution, we consider a Gaussian multiplier bootstrap procedure. As observed by \cite{cck2013}, a Gaussian multiplier random variable eliminates the need to establish a Gaussian approximation for the bootstrap since conditional on the data, it is already Gaussian. It is not hard to see that this is a natural candidate for the online setting; the multiplier bootstrap has been used for bootstrapping the stochastic gradient descent estimator in ~\cite{fang2018boot}.

We present our bootstrap in Algorithm~\ref{alg:boot}. In our procedure, we update $m+1$ vectors at every iteration. The first one is $\hat{v}$, which will result in the final Oja estimate of the first principal component. The other vectors $\{v^{*(j)},j=1,\dots m\}$ are obtained by perturbing the basic Oja update (Eq~\ref{eq:oja}).

	
\begin{algorithm}
	\SetAlgoLined
	\KwInput{Datapoints $X_1,\dots, X_n$, stepsize $\eta$, number of bootstrap replicates $m$}     
	\KwOutput{Oja's solution $\oja$ and $m$ bootstrapped versions of it $v_1^{*(1)},\dots,v_1^{*(m)} $}
	Draw $g\sim N(0,I_d)$\\
	Create unit vector $u_0\leftarrow g/\|g\|$\\
	Initialize $\oja,v_1^{*(1)},\dots,v_1^{*(m)}\leftarrow u_0$ \\
	
	\For{t=2,\dots, n}{
		Update $\oja\leftarrow \oja +\eta (X_t^T \oja)\oja$\\
		Normalize $\oja$ to have unit norm;\\
		\For{i=1:m}{
			Draw $W_i\sim N(0,1/2)$;\\
			Let $h^{(i)} \leftarrow (X_t^T v_1^{*(i)})X_{t}$;\\
			Let $g^{(i)} \leftarrow (X_{t-1}^T v_1^{*(i)})X_{t-1}$;\\
			Update $v_1^{*(i)}\leftarrow	v_1^{*(i)}+\eta \bb{h^{(i)}+W_i (h^{(i)}-g^{(i)})} $;\\
			Normalize $v_1^{*(i)}$ to have unit norm;
	}
	}
	\caption{\label{alg:boot}Bootstrap for Oja's algorithm}
\end{algorithm}

The $W_i$'s are the multiplier random variables, which are scaled mean zero scaled Gaussians with variance $1/2$. The update of the $v^{*(j)}$ is novel because it preserves the mean and the variance of the original Oja estimator while not requiring access to the full sample covariance matrix. Consequently, we can make our updates online and attain both a point estimate and a confidence interval for the principal eigenvector, \emph{while increasing the computation and storage by only a factor of $m$.} 

\section{Main results}\label{sec:main}
In this section we present our main contributions: a CLT for the error of Oja's algorithm and consistency of an online multiplier bootstrap for error.  
\subsection{Central limit theorem for the error of Oja's algorithm}
 We start by stating a CLT for the error of Oja's algorithm.  To state this theorem, we will need to introduce some notation. 

Let  $\hat{v}_1$ denote the Oja vector and $V_\perp$ the $d\times d-1$ matrix with $2,\ldots,d$ eigenvectors of $\Sigma$ on its columns.  Note that $V_\perp$ is not uniquely defined, but $V_\perp V_\perp^T = I- v_1 v_1^T$ is if the leading eigenvalue is distinct and consequently, norms of the form $\|V_\perp^T x \|$ for $x \in \mathbb{R}^d$ are well-defined. Let $\lambda_1\geq \dots \geq \lambda_d$ denote the eigenvalues of $\Sigma$ and  $\Lambda_\perp$ be a diagonal matrix with $\Lambda_\perp(i,i)=(1+\eta_n\lambda_{i+1}/n)/(1+\eta_n\lambda_1/n)$, $i=1,\dots,d-1$.  Also let
 \ba{
 \mathbb{M} :=  \mathbb{E}\left[V_\perp^T(X_1^Tv_1)^2 X_1X_1^T V_\perp \right ]
 }
Now we define 
\begin{align}\label{eq:bigvar}
    \bigvar &= \frac{\eta_n}{n}\sum_i \E [\vorth \Lambda_{\perp}^{i-1}\vorth^T(X_i X_i^T-\Sigma)v_1v_1^T(X_i X_i^T-\Sigma)\vorth \Lambda_\perp^{i-1}\vorth^T]\notag\\
   &= \frac{\eta_n}{n}V_\perp \left(\sum_i \Lambda_\perp^{i-1}\bM\Lambda_\perp^{i-1}\right)V_\perp^T
\end{align}


We have the following result: 

\begin{theorem}
\label{thm-oja-clt}
Suppose that $u_0$ is drawn from the uniform distribution on $\mathcal{S}^{d-1}$, $\lambda_1 = O(1)$.  
Choose $\eta_n\rightarrow\infty$ such that
$nd  \cdot \exp(-\eta_n(\lambda_1-\lambda_2))\rightarrow 0$, 
$\frac{ (\eta_n \vee \ \log d) \ \eta_n^2  (M_d^2\vee 1)}{n} \rightarrow 0$, where $M_d = \mathbb{E}[\norm{X_iX_i^T - \Sigma}^2]$.  Further, let $\widetilde{Z}_n$ be a mean $0$ Gaussian matrix such that $\text{Var}(\widetilde{Z}_n) = \text{Var}( (X_1X_1^T - \Sigma)v_1 )$ and suppose that:
\begin{align}\label{eq:clt-lb}
\norm{\mathbb{M}}_F &\geq c >0
\end{align}
\begin{align}\label{eq:clt-norm-bound}
\frac{ \mathbb{E}\left[\bigl\|V_\perp^T\widetilde{Z}_n \bigr\|^6\right] \vee \mathbb{E}\left[\norm{V_\perp^T(X_1 X_1^T - \Sigma)v_1}^6\right]}{\norm{\mathbb{M}}_F^3} = o(n) 
\end{align} 
Then, for a sequence of Gaussian distributions $\{Z_n\}_{n \geq 1}$ with mean $0$ and covariance matrix $\bigvar$ (see Eq~\ref{eq:bigvar}), the following holds:
\begin{align}
\label{eq:kolmogorov-consistency}
\sup_{t \in \mathbb{R}}\left|P\left( n/\eta_n \ \sin^2(\hat{v}_1, v_1) \leq t\right) - P(Z_n^T Z_n \leq t)\right| \rightarrow 0  
\end{align}
\end{theorem}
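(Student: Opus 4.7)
The starting point is the identity $\sin^2(\oja, v_1) = \|\vorth^T \oja\|^2 = \|\vorth^T B_n u_0\|^2/\|B_n u_0\|^2$, into which I substitute the Hoeffding decomposition $B_n = \sum_{k=0}^n T_k$ from \eqref{eq:hayek-proj-decomp}. I will argue that the dominant behavior comes from the linear Hoeffding term $T_1$ in the numerator (when projected onto $\vorth$), and from the deterministic term $T_0 = (I+\eta_n \Sigma/n)^n$ in the denominator (when projected onto $v_1$). Since $\vorth^T T_0 u_0 = (1+\eta_n\lambda_1/n)^n \Lambda_\perp^n \vorth^T u_0$, whose Euclidean norm is at most $\exp(-\eta_n(\lambda_1-\lambda_2))$ after dividing by $(1+\eta_n\lambda_1/n)^n$, the deterministic contribution to $\vorth^T B_n u_0$ is dominated by the random fluctuation so long as $nd\exp(-\eta_n(\lambda_1-\lambda_2))\to 0$. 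For $k\geq 2$, the orthogonality property of Hoeffding projections (Proposition~\ref{prop:orth-hoeffding-proj}) lets me bound $\E[\|\vorth^T T_k u_0\|^2]$ term by term over $|S|=k$, and combining this with matrix-product concentration as in \cite{huang2020matrix} and the rate condition $(\eta_n\vee\log d)\eta_n^2(M_d^2\vee 1)/n\to 0$ makes these higher-order contributions negligible relative to $T_1$.

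Next I linearize the ratio. Because $(I+\eta_n\Sigma/n)^{i-1}u_0 / (1+\eta_n\lambda_1/n)^{i-1}$ is close to $(v_1^T u_0) v_1$ — with a geometrically decaying error in the $\vorth$ direction — the normalized numerator satisfies
\[
\frac{\vorth^T T_1 u_0}{(1+\eta_n\lambda_1/n)^n} \approx (v_1^T u_0)\cdot\frac{\eta_n}{n}\sum_{i=1}^n \Lambda_\perp^{n-i}\vorth^T Y_i v_1,
\]
while $\|B_n u_0\|/(1+\eta_n\lambda_1/n)^n \to |v_1^T u_0|$. Since $u_0$ is uniform on $\mathcal{S}^{d-1}$, $(v_1^T u_0)^2$ is bounded below with high probability by a standard anti-concentration bound for the sphere, so dividing out is safe. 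Multiplying by $n/\eta_n$ and re-indexing $j = n-i+1$, I arrive at
\[
\frac{n}{\eta_n}\sin^2(\oja,v_1) \ \approx\ \xi_n^T \xi_n,\qquad \xi_n := \sqrt{\tfrac{\eta_n}{n}}\sum_{j=1}^n \Lambda_\perp^{j-1}\vorth^T Y_{n-j+1} v_1,
\]
where $\xi_n$ is a sum of independent mean-zero vectors with $\var(\xi_n) = \Gamma_n := \frac{\eta_n}{n}\sum_{j=1}^n \Lambda_\perp^{j-1}\mathbb{M}\Lambda_\perp^{j-1}$. Since $\bigvar = \vorth \Gamma_n \vorth^T$ and $\vorth$ has orthonormal columns, $Z_n^T Z_n = \zeta^T\zeta$ in distribution for $\zeta\sim N(0,\Gamma_n)$, so the target is the Kolmogorov closeness of $\xi_n^T\xi_n$ to $\zeta^T\zeta$.

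For that final step I will invoke a high-dimensional CLT for quadratic forms of sums of independent but non-identically distributed random vectors — a modest non-IID extension of \cite{10.1093/biomet/asz020}, as the introduction previews — whose preconditions are precisely \eqref{eq:clt-lb} (preventing degeneracy of the quadratic form) and \eqref{eq:clt-norm-bound} (a sixth-moment Lyapunov-type condition). The main obstacle, in my view, is not any individual step but the bookkeeping required to convert the \emph{multiplicative} approximation errors in the numerator and denominator into an \emph{additive} Kolmogorov error on the rescaled target: every remainder contributed by the $T_{k\geq 2}$ terms, by the linearization $(I+\eta_n\Sigma/n)^{i-1}u_0 \to (1+\eta_n\lambda_1/n)^{i-1}(v_1^T u_0)v_1$, and by the lower tail of $(v_1^T u_0)^2$ must be shown to vanish faster than the CLT's own rate $1/\|\Gamma_n\|_F$ after comparison lemmas are applied. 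The three growth/moment conditions in the theorem statement are tailored precisely to make each of these remainders $o(1)$, and carefully tracking the dependence on $\eta_n$, $M_d$, and $d$ — in particular the interaction between the eigengap-driven exponential decay and the $\eta_n^2 M_d^2/n$ term — is where most of the technical effort will lie.
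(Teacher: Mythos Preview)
Your overall architecture matches the paper's: express the $\sin^2$ error as a quadratic form in $B_n u_0$, use the Hoeffding decomposition \eqref{eq:hayek-proj-decomp} to isolate the linear term $T_1$, reduce to a sum of independent vectors with covariance $\bigvar$, and finish with a high-dimensional CLT for quadratic forms adapted from \cite{10.1093/biomet/asz020}.  The linearization of $T_1$ and the anti-concentration handling of $|v_1^T u_0|$ are also as in the paper.

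There is, however, a genuine gap in how you control the contribution of the orthogonal component $V_\perp V_\perp^T u_0$ of the random start.  The paper does \emph{not} push this piece through the Hoeffding decomposition.  Instead it first splits $B_n u_0 = (v_1^T u_0)\, B_n v_1 + B_n V_\perp V_\perp^T u_0$ and, in a separate step, bounds the \emph{ratio} $\|V_\perp V_\perp^T B_n V_\perp V_\perp^T u_0\|/\|B_n u_0\|$ directly by a recursive argument in the spirit of \cite{jain2016streaming}, obtaining
\[
O_P\!\left(\sqrt{d}\,e^{-\eta_n(\lambda_1-\lambda_2)} + \sqrt{\tfrac{\eta_n^3 M_d^2\log d}{n^2}}\right).
\]
Only the $(v_1^T u_0)\,V_\perp V_\perp^T B_n v_1$ part is then analyzed via the Hoeffding terms $T_k$, and there the factor $(v_1^T u_0)$ in the numerator cancels the $|v_1^T u_0|$ in the denominator.

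Your plan instead bounds $\mathbb{E}\|V_\perp^T T_k u_0\|^2$ for $k\ge 2$ by orthogonality and then divides by $(v_1^T u_0)^2$.  This does not work under the stated hypotheses.  For $k\ge 2$ the inner blocks $(I+\eta_n\Sigma/n)^{m}$ are sandwiched between $Y$'s, not between $V_\perp^T$'s, so only the outermost $\Lambda_\perp$ factor contracts; a second-moment computation gives, after normalizing by $(1+\eta_n\lambda_1/n)^{2n}$, a remainder whose $V_\perp V_\perp^T u_0$ part is of order $\eta_n^2 M_d^2/n^2$ in squared norm.  Dividing by $(v_1^T u_0)^2\asymp 1/d$ and rescaling by $n/\eta_n$ leaves an error of order $d\,\eta_n M_d^2/n$, which the assumption $(\eta_n\vee\log d)\,\eta_n^2 (M_d^2\vee 1)/n\to 0$ does \emph{not} kill once $d\gg \eta_n\log d$ --- precisely the high-dimensional regime the theorem targets.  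The recursive argument avoids this because it tracks the ratio itself rather than numerator and denominator separately, so the $1/|v_1^T u_0|\sim\sqrt{d}$ blow-up never appears in the fluctuation term; that is what produces the $\log d$ (rather than $d$) dependence matching the hypothesis.  Your $T_1$ linearization error \emph{is} harmless --- there both $\Lambda_\perp^{n-i}$ and $\Lambda_\perp^{i-1}$ flank the $Y_i$, giving the full $\Lambda_\perp(1,1)^{n-1}$ contraction --- but for $k\ge 2$ you need the paper's separate treatment of $V_\perp V_\perp^T B_n V_\perp V_\perp^T u_0$.
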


Theorem~\ref{thm-oja-clt} is very general. We allow the dimension to grow with the number of observations, which is typical in the high-dimensional bootstrap literature. Note that the case of fixed $d$ and growing $n$ is also a special case of this setup.

We want to point out that while previous literature obtained sharp bounds on the $\sin^2$ error $1-(v_1^T\hat{v}_1)^2$, we go a step further. \emph{We establish an approximating distribution for $n/\eta_n (1-(v_1^T\hat{v}_1)^2)$. }

\begin{remark}[Condition on norm]
For simplicity, we assume $\lambda_1=O(1)$, which can be easily relaxed to grow slowly with $n$. 
We do not assume that the $\|X_iX_i^T-\Sigma\|_2$ is bounded almost surely. However, the 
norm of $X_iX_i^T-\Sigma$ comes into play implicitly via the assumption in Eq~\ref{eq:clt-norm-bound}. 
Consider the case where $X_i$ are drawn from some multivariate Gaussian distribution. We use this to build intuition about the assumptions in Eq~\ref{eq:clt-lb} and~\ref{eq:clt-norm-bound}. In this case, $X_1^T\vorth$ is a Gaussian of independent entries and thus $\mathbb{E}\left[\norm{V_\perp(X_1 X_1^T - \Sigma)v_1}^6\right] = \E\|X_1^Tv_1\|^6\E \bb{\sum_{j>1} (X_j^T v_j)^2}^3$. Note that $\sum_{j>1} ((X_j^T v_j)^2-\lambda_j)$ is a sub-exponential random variable with parameters $(c_1\sum_{j>1} \lambda_j, c_2)$. Furthermore, $\|\bM\|_F^2=\lambda_1\sum_{i>1}\lambda_i$. Thus Eq~\ref{eq:clt-norm-bound}  reduces to checking if
\bas{
\frac{\lambda_1^{3/2}(\sum_{j>1}\lambda_j)^3}{\left(\sum_i \lambda_i^2\right)^{3/2}} = o(n)
}
\end{remark}

\begin{remark}[Coordinates with summable sub-Gaussian parameters]
\label{remark:summable-subG}
Eq \ref{eq:clt-norm-bound} imposes a growth condition on the moments of both the data and a Gaussian analog.  One setting for which both growth rates are in fact bounded is if the coordinates of $X$ are sub-Gaussian and the sub-Gaussian parameters satisfy $\sum_{i=1}^d \nu_i < C < \infty$ following similar arguments to Proposition \ref{prop:vardecay}.

\end{remark}

\begin{remark}[Constant vs Adaptive Learning Rate]

Adaptive learning rates are also commonly studied in the literature on Oja's algorithm and have the advantage that they require no prior knowledge of the sample size.  It should be noted that our results hold for a wide range of learning rates, ranging from $\log(nd) \ll \eta_n \ll n^{1/3}$, so our results will still apply so long as in the initial guess of the sample size is not off by orders of magnitude. 
We leave a detailed study of the adaptive learning rate setting to future work.

\end{remark}

As a corollary of our main theorem, we obtain the following error bound on the $\sin^2$ error.
\begin{corollary}\label{cor:sinsq} Under the conditions in Theorem \ref{thm-oja-clt}, we have
\begin{align*}
 \sin^2(\oja,v_1) =  O_P\bb{\frac{\eta_n M_d}{n(\lambda_1-\lambda_2)}}
\end{align*}
\end{corollary}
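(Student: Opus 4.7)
The plan is to use Theorem \ref{thm-oja-clt} to translate tail bounds on the Gaussian quadratic form $Z_n^T Z_n$ into tail bounds on $n/\eta_n \, \sin^2(\hat{v}_1, v_1)$, and then control $\mathbb{E}[Z_n^T Z_n] = \trace(\bigvar)$ in closed form. Since the Kolmogorov distance between the sampling distribution of $(n/\eta_n)\sin^2(\hat{v}_1,v_1)$ and the distribution of $Z_n^T Z_n$ goes to $0$, it suffices to show $Z_n^T Z_n = O_P(M_d/(\lambda_1-\lambda_2))$; Markov's inequality then yields the corollary after rescaling by $\eta_n/n$.

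First I would compute $\trace(\bigvar)$. Using the cyclic property of trace, $V_\perp^T V_\perp = I_{d-1}$, and the fact that $\Lambda_\perp$ is diagonal,
\begin{align*}
\trace(\bigvar) &= \frac{\eta_n}{n}\sum_{i=1}^n \trace\bigl(\Lambda_\perp^{i-1}\bM\Lambda_\perp^{i-1}\bigr) = \frac{\eta_n}{n}\sum_{j=1}^{d-1} \bM(j,j)\sum_{i=1}^n \Lambda_\perp(j,j)^{2(i-1)}.
\end{align*}
The geometric series is bounded by $(1-\Lambda_\perp(j,j)^2)^{-1}$. Since $1-\Lambda_\perp(j,j) = \frac{\eta_n(\lambda_1-\lambda_{j+1})/n}{1+\eta_n\lambda_1/n} \geq \tfrac{1}{2}\eta_n(\lambda_1-\lambda_{j+1})/n$ for $n$ large (using $\lambda_1=O(1)$), and $1+\Lambda_\perp(j,j)\geq 1$, we get $\sum_{i=1}^n \Lambda_\perp(j,j)^{2(i-1)} \lesssim n/(\eta_n(\lambda_1-\lambda_{j+1}))$. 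Bounding $\lambda_1 - \lambda_{j+1} \geq \lambda_1 - \lambda_2$ uniformly over $j$ yields
\begin{align*}
\trace(\bigvar) \;\lesssim\; \frac{\trace(\bM)}{\lambda_1-\lambda_2}.
\end{align*}

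Next I would bound $\trace(\bM)$ in terms of $M_d$. Since $V_\perp^T v_1 = 0$, one can rewrite $\bM = V_\perp^T \,\mathrm{Var}\bigl((X_1X_1^T-\Sigma)v_1\bigr)\,V_\perp$. Then
\begin{align*}
\trace(\bM) \;\leq\; \trace\bigl(\mathrm{Var}((X_1X_1^T-\Sigma)v_1)\bigr) \;=\; \mathbb{E}\bigl[\|(X_1X_1^T-\Sigma)v_1\|^2\bigr] \;\leq\; \mathbb{E}\bigl[\|X_1X_1^T-\Sigma\|^2\bigr] \;=\; M_d,
\end{align*}
using $\|v_1\|=1$ and the submultiplicativity of the operator norm. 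Combining the two displays, $\mathbb{E}[Z_n^T Z_n] = \trace(\bigvar) \lesssim M_d/(\lambda_1-\lambda_2)$, so Markov gives $Z_n^T Z_n = O_P(M_d/(\lambda_1-\lambda_2))$.

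Finally, Theorem \ref{thm-oja-clt} implies that for any diverging sequence $t_n$, $P(n/\eta_n \sin^2(\hat{v}_1,v_1) > t_n) - P(Z_n^T Z_n > t_n) \to 0$, so the same stochastic order transfers to $n/\eta_n \, \sin^2(\hat{v}_1, v_1)$, giving $\sin^2(\hat{v}_1,v_1) = O_P(\eta_n M_d /(n(\lambda_1-\lambda_2)))$. The only slightly delicate point is the lower bound on $1-\Lambda_\perp(j,j)$, which requires the learning-rate regime from the theorem to ensure the denominator $1+\eta_n\lambda_1/n$ stays bounded; this is immediate under the assumption $\eta_n^3/n \to 0$ in Theorem \ref{thm-oja-clt}. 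No single step is a serious obstacle once the variance identity $\bM = V_\perp^T \mathrm{Var}((X_1 X_1^T-\Sigma)v_1)V_\perp$ is observed.
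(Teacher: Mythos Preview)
Your proposal is correct. The central computation---bounding $\trace(\bigvar)$ via the geometric sum $\sum_i \Lambda_\perp(j,j)^{2(i-1)} \lesssim n/(\eta_n(\lambda_1-\lambda_2))$ together with $\trace(\bM)\leq M_d$, then applying Markov---is exactly the argument the paper uses, and your identity $\bM = V_\perp^T\,\mathrm{Var}\bigl((X_1X_1^T-\Sigma)v_1\bigr)\,V_\perp$ is the clean way to get the $M_d$ bound.

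One minor stylistic difference: the paper derives the $O_P$ bound as a ``byproduct of our analysis'', i.e.\ from the intermediate decomposition (Steps~2--5 in the proof sketch) that controls $\sin^2(\hat v_1,v_1)$ by $\eta_n/n$ times the squared norm of the H\'ajek projection plus lower-order remainders, then bounds the expectation of that squared norm (which is $\trace(\bigvar)$). You instead invoke the final Kolmogorov-distance conclusion of Theorem~\ref{thm-oja-clt} and transfer tightness from $Z_n^TZ_n$ back to $(n/\eta_n)\sin^2(\hat v_1,v_1)$. Both routes land on the same trace calculation; yours is slightly slicker in that it treats Theorem~\ref{thm-oja-clt} as a black box, while the paper's route makes explicit that the rate comes from the H\'ajek term and does not actually require the distributional conditions~\eqref{eq:clt-lb}--\eqref{eq:clt-norm-bound} (though the corollary assumes them anyway).
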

\begin{remark}[Comparison with previous work]

As a byproduct of our analysis, we recover the sharpest convergence rates for Oja's algorithm in the literature. If we set $\eta_n=c_1\log n  d /(\lambda_1-\lambda_2)$, for large enough $c_1$, the dominating term in the error is $O_P\bb{\dfrac{M_d\log n d \bk }{n(\lambda_1-\lambda_2)^2} }$ under mild conditions on $d$. This matches the bound in~\cite{jain2016streaming}.
\end{remark}

\begin{remark}[Rate of convergence in Kolmogorov distance]
To simplify the theorem statement, we have stated Theorem \ref{thm-oja-clt} without giving an explicit rate of convergence in the Kolmogorov distance.  Convergence rates depend on the rate of decay of the remainder terms, which are worked out in Supplement Section \ref{sec:CLT-support}, and the magnitude of the quantity in Eq \ref{eq:clt-norm-bound}.  The contribution of the latter quantity to the rate is worked out in the IID case in \cite{10.1093/biomet/asz020}. 
\end{remark}

\begin{remark}[Lower bound on norm]
While our rate matches the sharp bounds in literature and our assumptions on norm upper bounds are similar or weaker than previous work, we do assume a lower bound on the Frobenius norm of the covariance matrix as in Eq~\ref{eq:clt-lb}. Note that if indeed all $X_i$'s were a scalar multiple of $v_1$, then the $\bigvar$ matrix in Eq~\ref{eq:bigvar} will be zero. This will lead to a perfect point estimate, but there will not be any variability from the data and hence there will be no non-degenerate approximation.
The lower bound on the norm is not resulting from loose analysis. Similar lower bounds on the variance are imposed in the high-dimensional CLT literature \cite{cck2013,chernozhukov2017}. 
\end{remark}
Now we provide a proof sketch of Theorem~\ref{thm-oja-clt} below.
\begin{proof}[Proof sketch for Theorem~\ref{thm-oja-clt}]
We provide the main steps in our derivation. The detailed calculations can be found in Supplement Section~\ref{sec:B}.
\begin{enumerate}[label={\arabic{enumi}}.,ref={Step \arabic{enumi}},leftmargin=*]
\item 
We start by expressing the $\sin^2$ error as a quadratic form:
\begin{align}
\begin{split}
\label{eq:residual-express}
\sin^2(v_1, \oja)  &=
1-\frac{u_0^T B_n^T v_1v_1^TB_n u_0}{u_0^TB_n^TB_n u_0} 
= \frac{u_0^T B_n^T (I-v_1v_1^T)B_n u_0}{u_0^TB_n^TB_n u_0} 
 \\ &= \frac{ (V_\perp V_\perp^T B_n u_0)^T (V_\perp V_\perp^T B_n u_0) }{\|B_n u_0\|^2}
\end{split}
\end{align} 
where in the last line we used the fact that $V_\perp V_\perp^T$ is idempotent.  Our proof strategy for the central limit theorem involves further approximating Eq \ref{eq:residual-express} with an inner product of the H\'{a}jek projection (first-order) term in Eq \ref{eq:hayek-proj-decomp}.     
\item Our second step is to show that $\|B_n u_0\|$ concentrates around its expectation $(1+\eta_n\lambda_1/n)^n|v_1^T u_0|$.
    \item Next we establish that  $\frac{\|\vorth\vorth B_n\vorth\vorth^T u_0\|_2}{\|B_n u_0\|}$ is $O_P\left( \sqrt{d} \cdot \exp\{- \eta_n(\lambda_1 - \lambda_2)\}  + \sqrt{\frac{\eta_n^3 M_d^2\log d}{n^2}} \right)$. This is achieved by using a similar recursive argument as in~\cite{jain2016streaming}, but with the crucial observation that the residual or common difference term is of a lower order because it can be replaced by a matrix product minus its expectation.
    \item Now we go back to the expansion in Eq~\ref{eq:hayek-proj-decomp}. 
    \bas{
    (v_1^Tu_0)\vorth\vorth^T B_n v_1 = (v_1^Tu_0)\sum_k \vorth\vorth^T T_k v_1
    }
    Since $T_0=(I+\eta_n/n\Sigma)^n$,  $\vorth\vorth^T T_0 v_1 $ is the zero vector. Now we examine the $(v_1^Tu_0)\vorth\vorth^T(B_n-T_1)v_1$ term. Here we use the structure of the higher order terms $T_k$. In particular, we use the fact that it is a matrix product interlaced with $k$ $X_iX_i^T-\Sigma$ matrices. For example, for $k=2$ we have  
    \bas{
    T_2=\frac{\eta_n^2}{n^2}\sum_{i<j}\bb{I+\frac{\eta_n}{n}\Sigma}^{i-1}Y_i\bb{I+\frac{\eta_n}{n}\Sigma}^{j-i-1}Y_j\bb{I+\frac{\eta_n}{n}\Sigma}^{n-j}
    }
     We show that the norm of $(v_1^Tu_0)\vorth\vorth^T(B_n-T_1)v_1$, normalized by the denominator, is $O(\eta_n^2 M_d^2/n^2)$. The fact that the summands of $T_k$ are uncorrelated and $T_k$ and $T_\ell$ are uncorrelated for $k\neq\ell$ makes this possible.
    \item Finally, we are left with $\vorth\vorth^T T_1 v_1(v_1^T u_0)$. Note that this is of the following form:
    \bas{
    \frac{\eta_n}{n}\frac{(v_1^T u_0)\vorth\vorth^T T_1 v_1}{|v_1^T u_0|(1+\lambda_1\eta_n/n)^n} = \frac{\eta_n\sgn(v_1^Tu_0)}{n}\sum_{i=1}^n \vorth\Lambda_\perp^{i-1}\vorth^T (X_iX_i^T-\Sigma)v_1
    }
    It is not hard to see that this is a sum of independent random vectors with covariance matrix $\eta_n/n \bigvar$ (see Eq~\ref{eq:bigvar}). 
    \item We  adapt a result of distributional convergence of squared norm of sums of IID random vectors in~\cite{10.1093/biomet/asz020} to squared norm of sums of independent random vectors. Under the assumptions~\ref{eq:clt-norm-bound} and~\ref{eq:clt-lb}, the conditions of distributional convergence are satisfied. 
    \item Finally, all the error terms are combined along with an anti-concentration argument for $\chi^2$ to establish the final result. The full proof and accompanying lemmas are in Section~\ref{sec:B} of the Supplement.
\end{enumerate}
\end{proof}

\subsection{Bootstrap consistency}
Using the weighted $\chi^2$ approximation for inference requires estimating the eigenvalues of $\Sigma$ and other population quantities; however, accurate estimates may not be available in a streaming setting. Instead,  we propose a streaming bootstrap procedure that mimics the properties of the original Oja algorithm. While a similar structure leads to error terms that are similar to the CLT, the analysis of the bootstrap presents its own technical challenges.  In what follows let $P^*$ denote the bootstrap measure, which is conditioned on the data, and let $\mathbb{E}^*[\cdot]$ denote the corresponding expectation operator. 

A common strategy for establishing consistency of the Gaussian multiplier bootstrap is to invoke a Gaussian comparison lemma.  Since the multipliers are themselves Gaussian and the data is treated as fixed, the idea is that one can use specialized results for comparing the distributions of two Gaussians (bootstrapped $Z_n^*$ and approximating $Z_n$ from the CLT) that only depend on how close the covariance matrices $\E^*[ Z_n^*Z_n^{*T}]$ and $\E [Z_nZ_n^T]$ are in an appropriate metric.  Using a Gaussian comparison lemma for quadratic forms (see Supplement Section \ref{sec:gausscompare}), we have the following result for the bootstrapped $\sin^2$ error:

\begin{lemma}\label{lem:covdiff}[Bounding the difference between the bootstrap covariance and true covariance]
Let:
\ba{\label{eq:zstar}
Z_n^*=\sgn(v_1^Tu_0)\sqrt{\frac{\eta_n}{n}} \sum_i W_i \vorth \Lambda_{\perp}^{i-1}\vorth^T(X_iX_i^T-X_{i-1}X_{i-1}^T)v_1.
}
Recall the definition of $\bigvar$ from Eq~\ref{eq:bigvar}.
We have,
\bas{
|\trace(\E^*[Z_n^*Z_n^{*T}]-\bigvar)|,\|\E^*[Z_n^*Z_n^{*T}]-\bigvar\|_F&=O_P\bb{\sqrt{\frac{\E\|X_1X_1^T-\Sigma\|^4}{n(\lambda_1-\lambda_2)}}} 
}
\end{lemma}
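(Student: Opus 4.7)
The plan is to exploit the fact that, thanks to the variance choice $\mathrm{Var}(W_i)=1/2$ and the successive-difference form $X_iX_i^T-X_{i-1}X_{i-1}^T$, the bootstrap covariance is exactly an unbiased estimator of $\bigvar$, after which the bound reduces to controlling the fluctuation of a sum of weakly dependent matrix terms. Concretely, write $Y_i=X_iX_i^T-\Sigma$ and $M_i=\sqrt{\eta_n/(2n)}\,V_\perp \Lambda_\perp^{i-1} V_\perp^T (Y_i-Y_{i-1})v_1$, so that integrating out the independent Gaussian multipliers gives
\[
\E^*[Z_n^* Z_n^{*T}] \;=\; \sum_{i=1}^n A_i, \qquad A_i := M_i M_i^T.
\]
Expanding $(Y_i-Y_{i-1})v_1v_1^T(Y_i-Y_{i-1})$ and using independence of $Y_i,Y_{i-1}$ with $\E[Y_i]=0$, the four cross products collapse to $2\E[Y_1 v_1 v_1^T Y_1]$, so that $\E[A_i]=\frac{\eta_n}{n}V_\perp \Lambda_\perp^{i-1}V_\perp^T \mathbb{M}\, V_\perp \Lambda_\perp^{i-1}V_\perp^T$ (after recognising $V_\perp^T\mathbb{E}[Y_1 v_1 v_1^T Y_1] V_\perp = \mathbb{M}$ on the orthogonal subspace), which sums to $\bigvar$. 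Thus $D := \E^*[Z_n^* Z_n^{*T}]-\bigvar = \sum_i (A_i - \E A_i)$, and the task reduces to a variance calculation for this centered sum.

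The next step is to exploit one-dependence. Since $A_i$ depends only on $(Y_{i-1},Y_i)$, the centered terms $A_i-\E A_i$ and $A_j-\E A_j$ are independent whenever $|i-j|\geq 2$. Hence
\[
\E\|D\|_F^2 \;\leq\; 3 \sum_{i=1}^n \E\|A_i-\E A_i\|_F^2 \;\leq\; 3\sum_{i=1}^n \E\|M_i\|^4,
\]
and analogously $\Var(\trace D) \leq 3\sum_i \Var(\|M_i\|^2) \leq 3\sum_i \E\|M_i\|^4$. Now use the spectral decomposition $V_\perp \Lambda_\perp^{i-1}V_\perp^T = \sum_k \Lambda_\perp(k,k)^{i-1} v_{k+1}v_{k+1}^T$ together with $\|\Lambda_\perp\|_{op}=\rho := (1+\eta_n\lambda_2/n)/(1+\eta_n\lambda_1/n)$ to get $\|M_i\|^2 \leq (\eta_n/(2n))\rho^{2(i-1)}\|V_\perp^T(Y_i-Y_{i-1})v_1\|^2$, whence
\[
\E\|M_i\|^4 \;\leq\; C\,\frac{\eta_n^2}{n^2}\,\rho^{4(i-1)}\,\E\|Y_1\|^4.
\]
Summing the geometric series gives $\sum_i \rho^{4(i-1)} \leq 1/(1-\rho^4) \asymp n/(\eta_n(\lambda_1-\lambda_2))$, since $1-\rho \asymp \eta_n(\lambda_1-\lambda_2)/n$ under the standing assumption $\eta_n\lambda_1/n \to 0$. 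Combining, $\E\|D\|_F^2$ and $\Var(\trace D)$ are both bounded by $C\,\E\|Y_1\|^4/(n(\lambda_1-\lambda_2))$ up to the explicit $\eta_n$-factor that one then absorbs into the constant or rate (alternatively, a more careful bound that keeps coordinate-wise rather than operator-norm control of $\Lambda_\perp^{i-1}$ applied to $\mathbb{M}$ tightens this further). Markov's inequality then yields the claimed $O_P$ bound on both the trace and the Frobenius norm of $D$.

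The main obstacle is really bookkeeping rather than a single hard step: one must carefully verify the unbiasedness identity (making precise the cancellation between $V_\perp^T\E[Y_1 v_1 v_1^T Y_1]V_\perp$ and $\mathbb{M}$, including on the $v_1$-component where both sides vanish because of the $V_\perp$ sandwich), and then ensure that the geometric decay $\rho^{4(i-1)}$ is preserved after the Cauchy--Schwarz/Jensen steps used to reduce $\E\|A_i-\E A_i\|_F^2$ to $\E\|Y_1\|^4$. The trace bound follows from the same ingredients after replacing the Frobenius-norm computation with scalar variance, since $\trace(A_i)=\|M_i\|^2$ is a scalar 1-dependent sequence.
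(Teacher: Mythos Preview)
Your approach is essentially the natural one and is almost certainly the same route the paper takes: (i) use $\mathrm{Var}(W_i)=1/2$ and the successive-difference structure to verify that $\E^*[Z_n^*Z_n^{*T}]$ is an unbiased estimator of $\bigvar$; (ii) exploit the $1$-dependence of the summands $A_i=M_iM_i^T$ to reduce $\E\|D\|_F^2$ and $\mathrm{Var}(\trace D)$ to $\sum_i \E\|M_i\|^4$; (iii) bound each $\E\|M_i\|^4$ via the geometric factor $\rho^{4(i-1)}$ and sum; (iv) conclude with Markov. Your verification of unbiasedness is correct, including the key identity $V_\perp^T\E[Y_1v_1v_1^TY_1]V_\perp=\mathbb{M}$ (which follows from $V_\perp^T\Sigma v_1=0$).

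The one loose end is the extra $\eta_n$ factor you flag. Your chain gives
\[
\E\|D\|_F^2 \;\leq\; C\,\frac{\eta_n^2}{n^2}\Bigl(\sum_{i}\rho^{4(i-1)}\Bigr)\,\E\|Y_1\|^4 \;\asymp\; C\,\frac{\eta_n}{n(\lambda_1-\lambda_2)}\,\E\|Y_1\|^4,
\]
which carries an additional $\eta_n$ relative to the stated rate. You cannot ``absorb $\eta_n$ into the constant'' since $\eta_n\to\infty$, and your proposed fix of replacing the operator-norm bound on $\Lambda_\perp^{i-1}$ by coordinate-wise control does not help: working entrywise one gets $\E[D_{kl}^2]\lesssim (\eta_n^2/n^2)\sum_i(\rho_k\rho_l)^{2(i-1)}\E[(\tilde C_i)_{kl}^2]$, and the geometric sum still contributes $\asymp n/(\eta_n(2\lambda_1-\lambda_{k+1}-\lambda_{l+1}))$, so after summing over $k,l$ the extra $\eta_n$ remains. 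In other words, the factor is intrinsic to this second-moment argument, not an artifact of the crude operator-norm step. Either the lemma as stated is missing an $\eta_n$ (which is harmless downstream given the paper's standing conditions $\eta_n^3/n\to 0$, etc.), or an additional structural assumption beyond $\E\|Y_1\|^4$ is used in the supplement; your argument as written does not deliver the rate without it. Everything else in your outline is correct.
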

 With this lemma in hand, we are ready to state our bootstrap result.  
  
\begin{theorem}[Bootstrap Consistency]
\label{thm-oja-boot}
Suppose that the conditions of Theorem \ref{thm-oja-clt} are satisfied.  Furthermore, let $\alpha_n$ be a sequence such that $P(\mathcal{A}_n^c) \rightarrow 0$, where $\mathcal{A}_n$ is defined as $\mathcal{A}_n =  \left\{\max_{i \leq i \leq n } \|X_i\|^2 \leq \alpha_n \right\}$. Further suppose that $\frac{M_d \log^2{d} \ \eta_n^2}{n} \rightarrow 0$, $\frac{( \alpha_n^3 \vee M_d \log d) \ \alpha_n \eta_n^3}{n} \rightarrow 0$, $\frac{\alpha_n M_d \eta_n^2}{n(\lambda_1-\lambda_2)} \rightarrow 0 $, and $\frac{\mathbb{E}[\norm{X_1X_1^T- \Sigma}^4]}{n(\lambda_1-\lambda_2)} \rightarrow 0 $.
Then,
\begin{align*}
 \sup_{t \in \mathbb{R}} \left|P^*(n/\eta_n \sin^2( v_1^*, \hat{v}_1) \leq t) - P(n/\eta_n \sin^2( \hat{v}_1, v_1) \leq t)  \right| \xrightarrow{P} 0   
\end{align*}
\end{theorem}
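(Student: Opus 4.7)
The plan is to parallel the CLT proof of Theorem~\ref{thm-oja-clt} but carry it out conditionally on the data for the bootstrap iterates, and then match the resulting conditional Gaussian limit to the unconditional one via a covariance comparison. Writing the bootstrap updates in matrix-product form gives $v_1^{*} = B_n^{*} u_0/\|B_n^{*} u_0\|$ with
\begin{align*}
B_n^{*} = \prod_{i=2}^n \Bigl(I + \tfrac{\eta_n}{n}\bigl(X_i X_i^T + W_i(X_i X_i^T - X_{i-1} X_{i-1}^T)\bigr)\Bigr),
\end{align*}
and because the $W_i$ are independent mean-zero Gaussians of variance $1/2$, we have $\mathbb{E}^{*}[B_n^{*}] = B_n$. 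A Hoeffding-type decomposition of $B_n^{*}$ in the multipliers yields $B_n^{*} = B_n + T_1^{*} + R^{*}$, where $T_1^{*}$ is the linear-in-$W$ piece and $R^{*}$ collects the orthogonal higher-order terms. Expressing $\sin^2(v_1^{*},\hat{v}_1)$ as the quadratic form $\|\widehat V_\perp \widehat V_\perp^T B_n^{*} u_0\|^2/\|B_n^{*} u_0\|^2$, where $\widehat V_\perp \widehat V_\perp^T = I-\hat v_1\hat v_1^T$, the $B_n$-piece is nearly annihilated because Oja's consistency (Corollary~\ref{cor:sinsq}) forces $B_n u_0$ to be almost aligned with $\hat v_1$.

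Next, I would run the same five-stage reduction as in the sketch of Theorem~\ref{thm-oja-clt} but under $P^{*}$, restricted to the event $\mathcal{A}_n = \{\max_{i\le n}\|X_i\|^2 \le \alpha_n\}$: (i) concentration of $\|B_n^{*} u_0\|$ around $\|B_n u_0\|$, which in turn concentrates on $(1+\eta_n \lambda_1/n)^n |v_1^T u_0|$; (ii) a recursive bound on $\|\widehat V_\perp \widehat V_\perp^T B_n^{*} \widehat V_\perp \widehat V_\perp^T u_0\|$, for which the growth conditions $M_d \log^2 d\,\eta_n^2/n \to 0$ and $(\alpha_n^3 \vee M_d \log d)\alpha_n \eta_n^3/n \to 0$ replace Theorem~\ref{thm-oja-clt}'s unconditional norm control by a truncated conditional analogue at level $\alpha_n$; (iii) orthogonality of the $W$-Hoeffding projections to show the quadratic form built from $R^{*}$ is $o_{P}(\eta_n/n)$; (iv) replacement of $\hat v_1, \widehat V_\perp$ by their population counterparts $v_1, V_\perp$ in the surviving linear piece via Corollary~\ref{cor:sinsq} and a Davis--Kahan perturbation bound, absorbed by $\alpha_n M_d \eta_n^2/(n(\lambda_1 - \lambda_2)) \to 0$. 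What remains after rescaling by $n/\eta_n$ is exactly $(Z_n^{*})^T Z_n^{*}$ with $Z_n^{*}$ as in Eq.~\eqref{eq:zstar}, which is conditionally Gaussian.

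To close the proof I would assemble a three-term Kolmogorov triangle inequality: Theorem~\ref{thm-oja-clt} for the sampling distribution; the conditional CLT just established for the bootstrap; and a Gaussian comparison lemma for squared norms (Supplement Section~\ref{sec:gausscompare}) applied to the Gaussian pair $(Z_n^{*}, Z_n)$. Lemma~\ref{lem:covdiff} supplies the needed covariance control, bounding $|\operatorname{tr}(\mathbb{E}^{*}[Z_n^{*} Z_n^{*T}] - \bar{\mathbb{V}}_n)|$ and $\|\mathbb{E}^{*}[Z_n^{*} Z_n^{*T}] - \bar{\mathbb{V}}_n\|_F$ by $o_{P}(1)$ under $\mathbb{E}[\|X_1 X_1^T - \Sigma\|^4]/(n(\lambda_1 - \lambda_2)) \to 0$. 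Combined with the lower bound Eq.~\eqref{eq:clt-lb} which keeps $\|\bar{\mathbb V}_n\|_F$ bounded below, the comparison lemma delivers $\sup_t|P^{*}((Z_n^{*})^T Z_n^{*} \le t) - P(Z_n^T Z_n \le t)| \xrightarrow{P} 0$, and the triangle inequality finishes the proof.

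The main obstacle is step (iv): the bootstrap residuals live most naturally in $\widehat V_\perp$-coordinates, so substituting population eigenvectors without degrading the $\eta_n/n$ rate requires invoking Corollary~\ref{cor:sinsq} at exactly the right order and tracking the cross-terms between the perturbation and the linear summands $V_\perp \Lambda_\perp^{i-1} V_\perp^T(X_i X_i^T - X_{i-1} X_{i-1}^T)v_1$. A secondary difficulty is that bootstrap concentration steps hold only on $\mathcal{A}_n$, which forces the $\alpha_n$-dependent growth conditions in the hypothesis to replace the purely moment-based conditions of Theorem~\ref{thm-oja-clt}; this is precisely where the assumption $(\alpha_n^3 \vee M_d \log d)\alpha_n \eta_n^3/n \to 0$ enters.
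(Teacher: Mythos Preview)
Your proposal is correct and follows essentially the same route as the paper's proof: a conditional Hoeffding decomposition of $B_n^{*}$ in the multipliers, matrix-product concentration (from \cite{huang2020matrix}) to replace the data products by $(I+\eta_n\Sigma/n)^i$, reduction to the conditionally Gaussian $Z_n^{*}$ of Eq.~\eqref{eq:zstar}, and then the Gaussian comparison via Lemma~\ref{lem:covdiff} combined with Theorem~\ref{thm-oja-clt} in a Kolmogorov triangle inequality. Two small corrections: $\widehat V_\perp\widehat V_\perp^T B_n u_0$ is \emph{exactly} zero (since $\hat v_1 = B_n u_0/\|B_n u_0\|$ by definition), not merely ``nearly annihilated''; and Davis--Kahan is not the right tool in step~(iv) because $\hat v_1$ is not an eigenvector of any perturbed covariance---Corollary~\ref{cor:sinsq} alone furnishes the needed bound on $\|\hat v_1\hat v_1^T - v_1 v_1^T\|$.
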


\begin{proof}[Proof sketch of Theorem~\ref{thm-oja-boot}]
The proof follows a similar route to Theorem~\ref{thm-oja-boot}. We provide a detailed analysis in Supplementary Section. We use a bootstrap version of the  Hoeffding decomposition conditioned on the data, stated in Supplement Section. In step one we have $B_n^*$ replace $B_n$, where $B_n^*$ is given by:
\bas{
B_n^*=\prod_{i=1}^n \bb{I+\eta_n/n (X_iX_i^T+W_i(X_iX_i^T-X_{i-1}X_{i-1}^T)}
}
We work out Step 1 using concentration of matrix products~\cite{huang2020matrix}. For steps 2-3, we see that $T_k^*$ has the same structure as $T_k$ with the difference that $(I+\eta_n\Sigma/n)^i$ is replaced by its sample counterpart which is a product of $i$ independent matrices of the form $I+\eta_n/n X_jX_j^T$.  Concentration of these terms in operator norm are established with results from~\cite{huang2020matrix}. Finally for step 4, we see that the main term that approximates the bootstrap residual $\vp\vp^T B_n^* u_0$ is given by $\sqrt{\eta_n/n}Z_n^*$, where $Z_n^*$ is given in Eq~\ref{eq:zstar}.
Conditioned on the data, this is already Normally distributed since the multiplier random variables $W_i$ are themselves Gaussian. We then invoke the Gaussian comparison result Lemma \ref{lem:covdiff} to obtain  convergence to the weighted  $\chi^2$ approximation.
\end{proof}

We now make a couple of points regarding our analysis. It should be noted that the terms in the product are weakly dependent, which is different from the CLT and would seem to complicate concentration arguments used to establish bootstrap consistency.  However, the dependence is not strong and second-moment methods may be used.  We also operate on a good set in which the norms of the the updates are not too large, which is far less restrictive than assuming an almost sure bound on the norm.   

In theorem above, we have stated the good set $\mathcal{A}_n$ in an abstract manner, but one may wonder how stringent the condition is in various problem settings.  Below, we describe a general setup with sub-Gaussian entries of $X_i$  in which $\alpha_n$ grows as $\log n$; under milder forms of various decay, all we need is for $\alpha_n$ to grow slowly with $n$. Here $\norm{\cdot }_{\psi_1}$ is the sub-Exponential Orlicz norm and $\norm{\cdot }_{\psi_2}$ is the sub-Gaussian Orlicz norm  (see, for example~\cite{verhsynin-high-dimprob}).
\begin{proposition}[The effect of variance decay on the norm]\label{prop:vardecay}
For each $1 \leq j \leq p$, suppose that $X_{1j}$ satisfies $\norm{X_{1j}}_{\psi_2} \leq \nu_j$ $\sum_{j=1}^p \nu_j \leq C_1 < \infty$.  Then, for some universal constant $ C_2>0$, $\norm{\sum_{j=1}^p(X_{1j}^2 - \mathbb{E}[X_{1j}^2]) }_{\psi_1} < C_2$, and for some $c_1, c_2> 0 $,
\begin{align*}
P\left( \max_{1 \leq i \leq n} \norm{X_{i}}^2 > c_1 \log n \right) \leq \frac{c_2}{n}     
\end{align*}
\end{proposition}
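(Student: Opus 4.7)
The plan is to prove the two conclusions in sequence: first establish the sub-Exponential Orlicz bound on the centered norm, then derive the maximum bound by combining a sub-Exponential tail inequality with a union bound. The decay condition $\sum_j \nu_j \leq C_1$ is crucial because we will actually need control on $\sum_j \nu_j^2$, which follows from $\nu_j^2 \leq C_1 \nu_j$.

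For the first claim, I would argue coordinate-by-coordinate. Since $\|X_{1j}\|_{\psi_2} \leq \nu_j$, the standard product property of Orlicz norms (see e.g.\ \cite{verhsynin-high-dimprob}) gives $\|X_{1j}^2\|_{\psi_1} \leq c \nu_j^2$ for a universal constant $c$, and the centering property yields $\|X_{1j}^2 - \mathbb{E}[X_{1j}^2]\|_{\psi_1} \leq 2c\nu_j^2$. Applying the triangle inequality for $\|\cdot\|_{\psi_1}$ and then using $\nu_j^2 \leq C_1 \nu_j$ I would conclude
\begin{align*}
\Bigl\|\sum_{j=1}^p (X_{1j}^2 - \mathbb{E}[X_{1j}^2])\Bigr\|_{\psi_1} \;\leq\; 2c\sum_{j=1}^p \nu_j^2 \;\leq\; 2c C_1 \sum_{j=1}^p \nu_j \;\leq\; 2c C_1^2 \;=:\; C_2.
\end{align*}

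For the second claim, write $\|X_i\|^2 = S_i + \mu$ where $S_i := \sum_{j=1}^p (X_{ij}^2 - \mathbb{E}[X_{ij}^2])$ and $\mu := \sum_{j=1}^p \mathbb{E}[X_{ij}^2]$. Since $\mathbb{E}[X_{ij}^2] \lesssim \nu_j^2$, the summable-decay hypothesis gives $\mu \leq c' C_1^2$, a finite universal constant, so it suffices to control $\max_i |S_i|$. By the first part, each $S_i$ is sub-Exponential with $\|S_i\|_{\psi_1} \leq C_2$, hence the standard tail bound yields $P(|S_i| > t) \leq 2 \exp(-t/C_2)$ for all $t>0$. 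Setting $t = c_1 \log n$ and taking a union bound over $i = 1, \dots, n$,
\begin{align*}
P\Bigl(\max_{1 \leq i \leq n} |S_i| > c_1 \log n\Bigr) \;\leq\; 2n \cdot n^{-c_1/C_2}.
\end{align*}
Choosing $c_1 > 2 C_2$ makes the right-hand side at most $2/n$, and absorbing the bounded additive constant $\mu$ into a slightly larger $c_1$ gives the desired bound on $\max_i \|X_i\|^2$.

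There is no real obstacle here; the argument is a direct application of classical Orlicz-norm machinery. The only subtlety worth flagging is that the hypothesis is stated in terms of $\sum \nu_j$ rather than $\sum \nu_j^2$, so one should be explicit about using $\nu_j \leq C_1$ to bootstrap from the given $\ell^1$ control to the $\ell^2$ control actually needed after squaring. Everything else (centering, triangle inequality, tail bound, union bound) is standard and can be stated in a few lines in the supplement.
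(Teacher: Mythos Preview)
Your argument is correct and is essentially the only natural route: square sub-Gaussians to get sub-exponentials, sum via the triangle inequality for $\|\cdot\|_{\psi_1}$, then apply a tail bound plus union bound. This matches the paper's approach, which relies on the same Orlicz-norm machinery from \cite{verhsynin-high-dimprob}; your explicit remark that $\sum_j \nu_j \leq C_1$ forces $\nu_j \leq C_1$ and hence $\sum_j \nu_j^2 \leq C_1^2$ is exactly the point that makes the $\ell^1$ hypothesis sufficient.
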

We now present experimental validation of our bootstrap procedure below.  
\section{Experimental validation of the online multiplier bootstrap}
\label{sec:exp}
We draw $Z_{ij}\stackrel{IID}{\sim} \mathrm{Uniform}(-\sqrt{3},\sqrt{3})$, for $i=1,\dots, n$ and $j=1,\dots d$.  Consider a PSD matrix $K_{ij}=\exp(-|i-j|c)$ with $c=0.01$. We create a covariance matrix such that $\Sigma_{ij}=K(i,j)\sigma_i\sigma_j$. We consider $\sigma_i=5 i^{-\beta}$ for $\beta = 0.2$ and $\beta = 1$. Now we transform the data to introduce dependence by letting $X_i =\Sigma^{1/2} Z_i$.  By construction, we have that $\E[X_{i} X_{i}^T] = \Sigma$ for all $1 \leq i \leq n$.  Our goal is to simply demonstrate that the bootstrap distribution of $\sin^2$ errors closely match that of the sampling distribution. To this effect, we fix $u_0$ and draw $500$ datasets and run streaming PCA on each and then construct an empirical CDF ($F$) from the $\sin^2$ error with the true $v_1$. This is the point of comparison for the bootstrap distribution ($F^*$), for which we fix a dataset $X$.  We then invoke algorithm~\ref{alg:boot} to obtain 500 bootstrap replicates  $\hat{v}^{*}_1$ as well as the Oja vector for the dataset $\oja$. The bootstrap distribution is the empirical CDF of $1-(\oja^T\hat{v}^{*}_1)^2$. We use $\eta_n=\log n$.
\begin{figure}[h]
    \centering
    \begin{tabular}{cc}
         \includegraphics[width=0.4\textwidth]{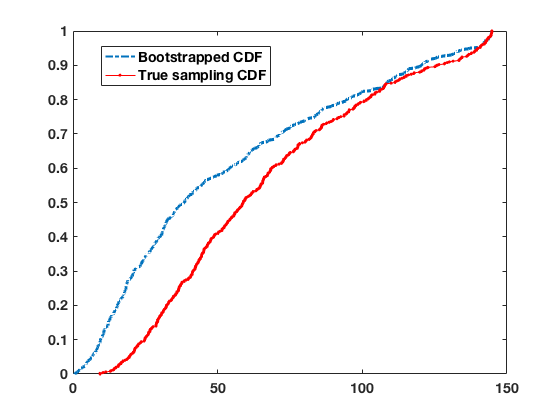}& \includegraphics[width=0.4\textwidth]{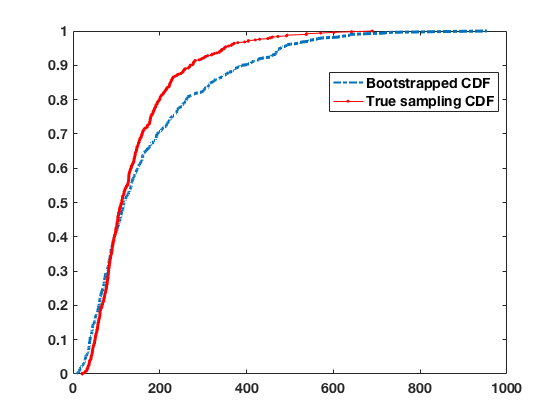} \\
         (A)&(B)\\
         \includegraphics[width=0.4\textwidth]{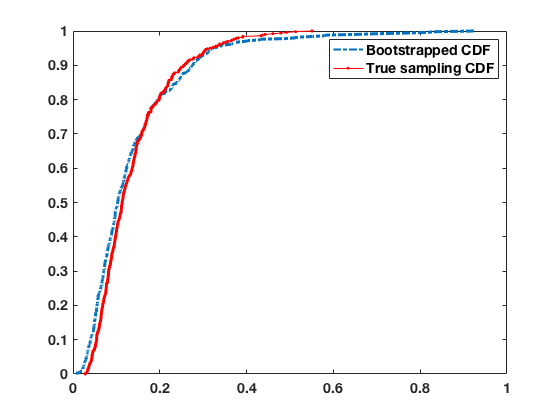}& \includegraphics[width=0.4\textwidth]{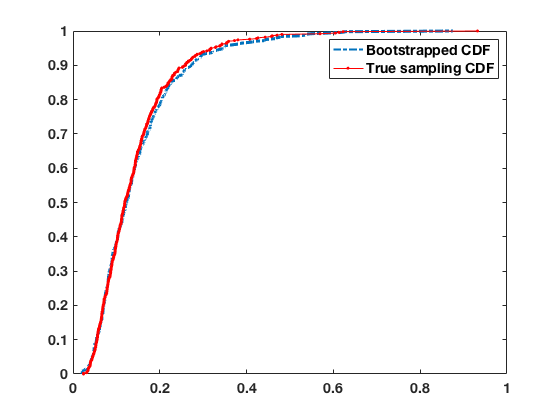} \\
         (C)&(D)
    \end{tabular}
    \caption{\label{fig:chisq}  Bootstrapped and sampling CDF for $n=1000,d=500$ in (A) and (C) and for $n=10,000,d=500$ in (B) and (D). (A) and (B) use $\beta=0.2$ whereas (C) and (D) use $\beta=1$.}
\end{figure}
In Figure~\ref{fig:chisq},
we see that for $\beta=0.2$ (see (A) and (B)), where the variance decay is slow and therefore the error bounds of the residual terms are expected to be large, the quality of approximation is poorer compared to (C) and (D), where $\beta=1$. However, even for $\beta=0.2$, increasing $n$ improves performance. Also note that, for (A) and (B) the variance decay does not satisfy our theorem’s conditions and thus, the normalized error does not behave like a $O_P(1)$ random variable. However, for (C) and (D) the variance decay satisfies the conditions and in this case the normalized error is $O_P(1)$, which happens to be in the [0,1] range for this example.

\section{Discussion}
Modern tools in non-asymptotic random matrix theory have given rise to recent breakthroughs in establishing pointwise convergence rates for  stochastic iterative methods in optimizing certain nonconvex objectives, including the classic Oja's algorithm for online principal component analysis.  By synthesizing modern random matrix theory tools with classic results from the  U-statistics literature and recently developed high-dimensional central limit theorems, we extend the error analysis of Oja's algorithm from pointwise convergence rates to  \emph{distributional} convergence and moreover establish an efficient online bootstrap method for Oja's algorithm to quantify the error on the fly.   Our results are a first step toward incorporating uncertainty estimation into the general framework of stochastic optimization algorithms, but we acknowledge the present limitations of our analysis: new tools will be needed to extend the current analysis to estimating higher-dimensional principal subspaces, and additional tools will be needed to account for non-independent matrix products which appear beyond the setting of online PCA. 

\section*{Acknowledgment}
P.S. and R.L. are supported in part by NSF 2019844 and NSF HDR-1934932.
R.W. is supported in part by AFOSR MURI FA9550-19-1-0005, NSF DMS 1952735, NSF HDR-1934932, and NSF 2019844.


 \bibliographystyle{abbrv}
\bibliography{references}

\begin{thebibliography}{10}

\bibitem{ALOW2017}
Z.~{Allen-Zhu}, Y.~Li, R.~Oliveira, and A.~Wigderson.
\newblock {Much Faster Algorithms for Matrix Scaling}.
\newblock In {\em Proceedings of the 58th Symposium on Foundations of Computer
  Science}, FOCS~'17, 2017.

\bibitem{pmlr-v49-balcan16a}
M.-F. Balcan, S.~S. Du, Y.~Wang, and A.~W. Yu.
\newblock An improved gap-dependency analysis of the noisy power method.
\newblock In V.~Feldman, A.~Rakhlin, and O.~Shamir, editors, {\em 29th Annual
  Conference on Learning Theory}, volume~49 of {\em Proceedings of Machine
  Learning Research}, pages 284--309, Columbia University, New York, New York,
  USA, 23--26 Jun 2016. PMLR.

\bibitem{bal2013pcastreaming}
A.~Balsubramani, S.~Dasgupta, and Y.~Freund.
\newblock The fast convergence of incremental {PCA}.
\newblock In C.~J.~C. Burges, L.~Bottou, M.~Welling, Z.~Ghahramani, and K.~Q.
  Weinberger, editors, {\em Advances in Neural Information Processing Systems
  26}, pages 3174--3182. Curran Associates, Inc., 2013.

\bibitem{balzano2010sstracking}
L.~{Balzano}, R.~{Nowak}, and B.~{Recht}.
\newblock {Online Identification and Tracking of Subspaces from Highly
  Incomplete Information}.
\newblock {\em arXiv e-prints}, page arXiv:1006.4046, June 2010.

\bibitem{randwalk-groups}
Y.~Benoist and J.-F. Quint.
\newblock {\em Random Walks on Reductive Groups}.
\newblock Springer International Publishing, June 2016.

\bibitem{chen2020SGD}
X.~Chen, J.~D. Lee, X.~T. Tong, and Y.~Zhang.
\newblock Statistical inference for model parameters in stochastic gradient
  descent.
\newblock {\em Ann. Statist.}, 48(1):251--273, 02 2020.

\bibitem{cck2013}
V.~Chernozhukov, D.~Chetverikov, and K.~Kato.
\newblock {Gaussian approximations and multiplier bootstrap for maxima of sums
  of high-dimensional random vectors}.
\newblock {\em The Annals of Statistics}, 41(6):2786 -- 2819, 2013.

\bibitem{chernozhukov2017}
V.~Chernozhukov, D.~Chetverikov, and K.~Kato.
\newblock Central limit theorems and bootstrap in high dimensions.
\newblock {\em Ann. Probab.}, 45(4):2309--2352, 07 2017.

\bibitem{efron1986}
B.~Efron and R.~Tibshirani.
\newblock Bootstrap methods for standard errors, confidence intervals, and
  other measures of statistical accuracy.
\newblock {\em Statist. Sci.}, 1(1):54--75, 02 1986.

\bibitem{emme2017matprod}
J.~{Emme} and P.~{Hubert}.
\newblock {Limit laws for random matrix products}.
\newblock {\em arXiv e-prints}, page arXiv:1712.03698, Dec. 2017.

\bibitem{fabian1968}
V.~Fabian.
\newblock On asymptotic normality in stochastic approximation.
\newblock {\em Ann. Math. Statist.}, 39(4):1327--1332, 08 1968.

\bibitem{fang2018boot}
Y.~Fang, J.~Xu, and L.~Yang.
\newblock Online bootstrap confidence intervals for the stochastic gradient
  descent estimator.
\newblock {\em J. Mach. Learn. Res.}, 19(1):3053–3073, Jan. 2018.

\bibitem{pearson-pca}
K.~P. F.R.S.
\newblock Liii. on lines and planes of closest fit to systems of points in
  space.
\newblock {\em The London, Edinburgh, and Dublin Philosophical Magazine and
  Journal of Science}, 2(11):559--572, 1901.

\bibitem{FURMAN2002931}
A.~Furman.
\newblock Chapter 12 random walks on groups and random transformations.
\newblock volume~1 of {\em Handbook of Dynamical Systems}, pages 931 -- 1014.
  Elsevier Science, 2002.

\bibitem{giessing2020bootstrapping}
A.~Giessing and J.~Fan.
\newblock Bootstrapping $\ell_p$-statistics in high dimensions, 2020.

\bibitem{gotze1996}
F.~Götze and H.~R. Künsch.
\newblock Second-order correctness of the blockwise bootstrap for stationary
  observations.
\newblock {\em Ann. Statist.}, 24(5):1914--1933, 10 1996.

\bibitem{hall-edgeworth}
P.~Hall.
\newblock {\em The Bootstrap and Edgeworth Expansion}.
\newblock Springer-Verlag, New York, 1992.

\bibitem{hardt2014noisy}
M.~Hardt and E.~Price.
\newblock The noisy power method: A meta algorithm with applications.
\newblock In Z.~Ghahramani, M.~Welling, C.~Cortes, N.~D. Lawrence, and K.~Q.
  Weinberger, editors, {\em Advances in Neural Information Processing Systems
  27}, pages 2861--2869. Curran Associates, Inc., 2014.

\bibitem{helmers1991}
R.~Helmers.
\newblock On the edgeworth expansion and the bootstrap approximation for a
  studentized $u$-statistic.
\newblock {\em Ann. Statist.}, 19(1):470--484, 03 1991.

\bibitem{henriksen2019adaoja}
A.~{Henriksen} and R.~{Ward}.
\newblock {AdaOja: Adaptive Learning Rates for Streaming PCA}.
\newblock {\em arXiv e-prints}, page arXiv:1905.12115, May 2019.

\bibitem{henriksen2019matprod}
A.~{Henriksen} and R.~{Ward}.
\newblock {Concentration inequalities for random matrix products}.
\newblock {\em arXiv e-prints}, page arXiv:1907.05833, July 2019.

\bibitem{hoeffding1948}
W.~Hoeffding.
\newblock A class of statistics with asymptotically normal distribution.
\newblock {\em Annals of Mathematical Statistics}, 19(3):293--325, 09 1948.

\bibitem{huang2020matrix}
D.~Huang, J.~Niles-Weed, J.~A. Tropp, and R.~Ward.
\newblock Matrix concentration for products, 2020.

\bibitem{huang2021streaming}
D.~Huang, J.~Niles-Weed, and R.~Ward.
\newblock Streaming {k-PCA}: Efficient guarantees for {Oja's} algorithm, beyond
  rank-one updates, 2021.

\bibitem{jain2016streaming}
P.~Jain, C.~Jin, S.~Kakade, P.~Netrapalli, and A.~Sidford.
\newblock Streaming {PCA}: Matching matrix bernstein and near-optimal finite
  sample guarantees for {Oja's} algorithm.
\newblock In {\em Proceedings of The 29th Conference on Learning Theory
  (COLT)}, June 2016.

\bibitem{jain2009}
P.~{Jain}, D.~{Nagaraj}, and P.~{Netrapalli}.
\newblock {SGD without Replacement: Sharper Rates for General Smooth Convex
  Functions}.
\newblock {\em arXiv e-prints}, page arXiv:1903.01463, Mar. 2019.

\bibitem{jain2013matcompl}
P.~Jain, P.~Netrapalli, and S.~Sanghavi.
\newblock Low-rank matrix completion using alternating minimization.
\newblock In {\em Proceedings of the Forty-Fifth Annual ACM Symposium on Theory
  of Computing}, STOC '13, page 665–674, New York, NY, USA, 2013. Association
  for Computing Machinery.

\bibitem{joliffe-pca}
I.~T. Jolliffe and J.~Cadima.
\newblock Principal component analysis: a review and recent developments.
\newblock {\em Phil. Trans. R. Soc. A.}, 2016.

\bibitem{keshavan2010completion}
R.~Keshavan, A.~Montanari, and S.~Oh.
\newblock Matrix completion from a few entries.
\newblock {\em Information Theory, IEEE Transactions on}, 56:2980 -- 2998, 07
  2010.

\bibitem{ledrappier2001asymptotic}
F.~Ledrappier.
\newblock {\em Some asymptotic properties of random walks on free groups},
  pages 117--152.
\newblock 06 2001.

\bibitem{Liu2018oja}
C.~J. Li, M.~Wang, H.~Liu, and T.~Zhang.
\newblock Near-optimal stochastic approximation for online principal component
  estimation.
\newblock {\em Math. Program.}, 167(1):75--97, 2018.

\bibitem{Li2017SGDaistats}
T.~{Li}, L.~{Liu}, A.~{Kyrillidis}, and C.~{Caramanis}.
\newblock {Statistical inference using SGD}.
\newblock {\em arXiv e-prints}, page arXiv:1705.07477, May 2017.

\bibitem{liang2021optimality}
X.~Liang.
\newblock On the optimality of the {Oja's} algorithm for online {PCA}, 2021.

\bibitem{pmlr-v80-lopes18a}
M.~Lopes, S.~Wang, and M.~Mahoney.
\newblock Error estimation for randomized least-squares algorithms via the
  bootstrap.
\newblock volume~80 of {\em Proceedings of Machine Learning Research}, pages
  3217--3226, Stockholmsmässan, Stockholm Sweden, 10--15 Jul 2018. PMLR.

\bibitem{lopes2019bagging}
M.~E. Lopes.
\newblock Estimating the algorithmic variance of randomized ensembles via the
  bootstrap.
\newblock {\em Ann. Statist.}, 47(2):1088--1112, 04 2019.

\bibitem{lopes2020svd}
M.~E. {Lopes}, N.~B. {Erichson}, and M.~W. {Mahoney}.
\newblock {Error Estimation for Sketched SVD via the Bootstrap}.
\newblock In {\em Proceedings of the International Conference of Machine
  Learning}, 2020.

\bibitem{lopes2019matmult}
M.~E. Lopes, S.~Wang, and M.~W. Mahoney.
\newblock A bootstrap method for error estimation in randomized matrix
  multiplication.
\newblock {\em Journal of Machine Learning Research}, 20(39):1--40, 2019.

\bibitem{mitliagkas2013pca}
I.~Mitliagkas, C.~Caramanis, and P.~Jain.
\newblock Memory limited, streaming {PCA}.
\newblock In {\em Proceedings of the 26th International Conference on Neural
  Information Processing Systems - Volume 2}, NIPS'13, page 2886–2894, Red
  Hook, NY, USA, 2013. Curran Associates Inc.

\bibitem{bach2011sgd}
E.~Moulines and F.~R. Bach.
\newblock Non-asymptotic analysis of stochastic approximation algorithms for
  machine learning.
\newblock In J.~Shawe-Taylor, R.~S. Zemel, P.~L. Bartlett, F.~Pereira, and
  K.~Q. Weinberger, editors, {\em Advances in Neural Information Processing
  Systems 24}, pages 451--459. Curran Associates, Inc., 2011.

\bibitem{nemirovsky2009}
A.~Nemirovski, A.~B. Juditsky, G.~Lan, and A.~Shapiro.
\newblock Robust stochastic approximation approach to stochastic programming.
\newblock {\em SIAM J. Optimization}, 19(4):1574--1609, 2009.

\bibitem{oja-simplified-neuron-model-1982}
E.~Oja.
\newblock Simplified neuron model as a principal component analyzer.
\newblock {\em Journal of Mathematical Biology}, 15(3):267--273, Nov. 1982.

\bibitem{polyak1992sgd}
B.~T. Polyak and A.~B. Juditsky.
\newblock Acceleration of stochastic approximation by averaging.
\newblock {\em SIAM J. Control Optim.}, 30(4):838–855, July 1992.

\bibitem{10.1214/15-EJS1090}
D.~Pouzo.
\newblock {Bootstrap consistency for quadratic forms of sample averages with
  increasing dimension}.
\newblock {\em Electronic Journal of Statistics}, 9(2):3046 -- 3097, 2015.

\bibitem{robbins1951}
H.~Robbins and S.~Monro.
\newblock A stochastic approximation method.
\newblock {\em Ann. Math. Statist.}, 22(3):400--407, 09 1951.

\bibitem{Ruppert1988EfficientEF}
D.~Ruppert.
\newblock Efficient estimations from a slowly convergent {Robbins-Monro}
  process.
\newblock 1988.

\bibitem{desa2014alecton}
C.~D. Sa, K.~Olukotun, and C.~R{\'{e}}.
\newblock Global convergence of stochastic gradient descent for some nonconvex
  matrix problems.
\newblock {\em CoRR}, abs/1411.1134, 2014.

\bibitem{shamir2016pca}
O.~Shamir.
\newblock Convergence of stochastic gradient descent for pca.
\newblock In {\em Proceedings of the 33rd International Conference on
  International Conference on Machine Learning - Volume 48}, ICML'16, page
  257–265. JMLR.org, 2016.

\bibitem{su2018uncertainty}
W.~J. Su and Y.~Zhu.
\newblock Uncertainty quantification for online learning and stochastic
  approximation via hierarchical incremental gradient descent, 2018.

\bibitem{vandervaart-asymptotic}
A.~{van der Vaart}.
\newblock {\em Asymptotic statistics}.
\newblock {Cambridge University Press}, 2000.

\bibitem{verhsynin-high-dimprob}
R.~Vershynin.
\newblock {\em High-Dimensional Probability}.
\newblock Cambridge University Press, Cambridge, UK, 2018.

\bibitem{10.1093/biomet/asz020}
M.~Xu, D.~Zhang, and W.~B. Wu.
\newblock {Pearson’s chi-squared statistics: approximation theory and
  beyond}.
\newblock {\em Biometrika}, 106(3):716--723, 04 2019.

\bibitem{yu2020nonconvexAnalysis}
L.~Yu, K.~Balasubramanian, S.~Volgushev, and M.~A. Erdogdu.
\newblock An analysis of constant step size sgd in the non-convex regime:
  Asymptotic normality and bias, 2020.

\end{thebibliography}


\begin{thebibliography}{1}

\bibitem{bentkus-symmetric-edgeworth}
V.~Bentkus, F.~G{\"o}tze, and W.~R. van Zwet.
\newblock {An Edgeworth expansion for symmetric statistics}.
\newblock {\em The Annals of Statistics}, 25(2):851 -- 896, 1997.

\bibitem{gotzeCOV}
F.~G{\"o}tze, A.~Naumov, V.~Spokoiny, and V.~Ulyanov.
\newblock {Large ball probabilities, Gaussian comparison and
  anti-concentration}.
\newblock {\em Bernoulli}, 25(4A):2538 -- 2563, 2019.

\bibitem{huang2020matrix}
D.~Huang, J.~Niles-Weed, J.~A. Tropp, and R.~Ward.
\newblock Matrix concentration for products, 2020.

\bibitem{jain2016streaming}
P.~Jain, C.~Jin, S.~Kakade, P.~Netrapalli, and A.~Sidford.
\newblock Streaming {PCA}: Matching matrix bernstein and near-optimal finite
  sample guarantees for {Oja's} algorithm.
\newblock In {\em Proceedings of The 29th Conference on Learning Theory
  (COLT)}, June 2016.

\bibitem{Stein-Shakarchi-Analysis}
E.~M. Stein and R.~Shakarchi.
\newblock {\em Real Analysis: Measure Theory, Integration, and {Hilbert}
  Spaces}.
\newblock Princeton University Press, Princeton,NJ, 2009.

\bibitem{vandervaart-asymptotic}
A.~{van der Vaart}.
\newblock {\em Asymptotic statistics}.
\newblock {Cambridge University Press}, 2000.

\bibitem{verhsynin-high-dimprob}
R.~Vershynin.
\newblock {\em High-Dimensional Probability}.
\newblock Cambridge University Press, Cambridge, UK, 2018.

\bibitem{10.1093/biomet/asz020}
M.~Xu, D.~Zhang, and W.~B. Wu.
\newblock {Pearson’s chi-squared statistics: approximation theory and
  beyond}.
\newblock {\em Biometrika}, 106(3):716--723, 04 2019.

\end{thebibliography}

 \makeatletter\@input{zzz.tex}\makeatother

\end{document}


\maketitle

\section*{Supplementary Material} 
\setcounter{section}{0}
\setcounter{equation}{0}
\renewcommand{\thesection}{\Alph{section}}
\renewcommand{\theequation}{S.\arabic{equation}}
\renewcommand{\thefigure}{S.\arabic{figure}}
\renewcommand{\thetable}{S.\arabic{table}}

\setcounter{theorem}{0}
\setcounter{proposition}{0}
\setcounter{corollary}{0}
\setcounter{lemma}{0}

\renewcommand{\thetheorem}{\Alph{section}.\arabic{theorem}}
\renewcommand{\thelemma}{\Alph{section}.\arabic{lemma}}

\renewcommand{\theproposition}{\Alph{section}.\arabic{proposition}}

\renewcommand{\thecorollary}{\Alph{section}.\arabic{corollary}}

In this document we provide the detailed proofs of results presented in the main manuscript. 
 In Section~\ref{sec:HD}, we provide a proof for the Hoeffding expansion of the matrix product in Eq~5 of the main document.  We also provide the Hoeffding decomposition for the bootstrap in Proposition~\ref{prop:boothoeff}.  In Section~\ref{sec:B} we provide all results needed for a complete proof of Theorem~\thmojaclt.  In Sections~\ref{sec:ojaclt}, ~\ref{sec:CLTadapt}, and~\ref{sec:CLT-support} we provide the proof of Theorem~\thmojaclt, the adaptation of high dimensional CLT of~\cite{10.1093/biomet/asz020} to our setting and all supporting lemmas, respectively. 
    
In Section~\ref{sec:C} we provide all details of the proof of the Bootstrap consistency, i.e. Theorem~\ref{thm-oja-boot}. To be specific, Section~\ref{sec:bootproof} has the proof of Theorem~2; Section~\ref{sec:lemcovdiff} has the proof of Lemma~\ref{lem:covdiff}, Section~\ref{sec:gausscompare} has the statement and proof of the Gaussian comparison lemma, and Section~\ref{sec:bootsupplem} has all the supporting lemmas. Finally, in Section~\ref{sec:D}, we provide a proof of Proposition~\ref{prop:vardecay}.

\section{On the Hoeffding decomposition}\label{sec:HD}
We discuss Hoeffding decompositions for a function $f$ of $n$ independent random variables $X_1, \ldots X_n$, where the random variables take values in an arbitrary space and the function takes values\footnote{The math generalizes to Hilbert spaces due to the Hilbert projection theorem but we specialize to these cases for concreteness.} in $\mathbb{R}^{d\times d}$ or $\mathbb{R}^d$. The following exposition largely follows \cite{vandervaart-asymptotic}.

With Hoeffding decompositions, we project  $T(X_1,\ldots, X_n)$ onto spaces of increasing complexity that are orthogonal to each other.  In our setup, orthogonality means $\langle f, g \rangle_{L^2} = 0$ where $\langle f,g \rangle_{L^2} = \int \langle f, g \rangle dP$.  Here,  $\langle f,g \rangle = \mathrm{Trace}(f^Tg)$ in the matrix case and $\langle f,g \rangle = f^Tg$ in the vector case. The first-order projection, also known as a H\'{a}jek projection, involves projecting our function onto a space of functions of the form
\begin{align*}
g^{(i)}(X_i)    
\end{align*}
where $g^{(i)}$ satisfies $E[g^{(i)}] = 0$.  We will let $H^{(i)}(X_i)$ denote the corresponding projection. Since the functions $g^{(i)}, g^{(j)}$ are mutually orthogonal for $i \neq j$, the sum of the projections is equivalent to the projection onto the space spanned by functions of the form:
\begin{align*}
\sum_{i=1}^n g^{(i)}(X_i) 
\end{align*}
The higher-order spaces have the form:
\begin{align*}
 g^{(S)}( X_i : i \in S)   
\end{align*}
where $S \subseteq \{1, \ldots, n \}$ and the functions satisfy $\mathbb{E}[g^{(S)} \ | \ X_i : i \in R] = 0$ for any $R \subset S$, including $R = \emptyset$, which implies $\mathbb{E}[g^{(S)}] = 0$.  If  $ R \not \subset S$ and $S \not \subset R$, $\langle g^{(S)}, g^{(R)} \rangle_{L^2}=0$ since, by conditional independence given $\{X_i : i \in R \cap S \}$:
\begin{align} 
\label{eq-orthogonality-spaces}
\mathbb{E}[\mathbb{E}[\langle g^{(S)}, g^{(R)} \rangle \ | \  X_i : i \in  R \cap S  ] \ ]  = 
 \mathbb{E}\left[ \bigl\langle \mathbb{E}[ g^{(S)} \ | \ X_i : i \in R \cap S ]  , \  \mathbb{E} [ g^{(R)} \ | \  X_i : i \in  R \cap S ] \ \bigr\rangle  \right] = 0
\end{align}
Combining these projections leads to the following representation, known as the Hoeffding decomposition:
\begin{align*}
T(X_1, \ldots, X_n) &= \sum_{k=0}^n \sum_{|S| =k } H^{(S)}( X_i : i \in S)   
\end{align*}

While the following proposition is stated for real-valued functions in \cite{vandervaart-asymptotic}[Lemma 11.11], it turns out that the proof there generalizes to our setting without difficulty due to machinery for projections in  Hilbert spaces. 

\begin{proposition}[Hoeffding projections]
Let $X_1, \ldots, X_n$ be arbitrary random variables and let suppose $\langle T,T \rangle_{L_2} < \infty$.  Then the projection on the the space of functions of the form $g^{(S)}( X_i : i \in S)$ with $\mathbb{E}[g^{(S)} \ | \ X_i : i \in R] = 0$ for any $R \subset S$ has the form:
\begin{align*}
H^{(S)}(T) = \sum_{R \subseteq S} (-1)^{|S|-|R|} \ \mathbb{E}\left[T \ | \ X_i: i \in R \right]
\end{align*}
\end{proposition}

For completeness, we provide a proof of the proposition below. 
\begin{proof}
We begin by verifying that the space of all random matrices (vectors) satisfying  $\|A\|_{L^2} < \infty$ forms a Hilbert Space.  First, it is clear that $\langle \cdot , \cdot \rangle_{L^2}$ is indeed an inner product.  Linearity follows from linearity of the inner product $\langle \cdot , \cdot \rangle$ and linearity of expectations and conjugate symmetry follows from this property holding pointwise in $\Omega$ for $\langle \cdot , \cdot \rangle$.  Positive definiteness again follows from the fact that this property holds pointwise in $\Omega$; then a standard contradiction argument yields that if $\langle x,x\rangle_{L^2} =0$, but $x$ is not equal to 0 almost surely, there exists some $M$ such that for some $\delta > 0$, $P(\|x\| > \frac{1}{M}) \geq \delta $ and hence $\int \langle x,x \rangle dP \geq \delta/M > 0$, a contradiction.

One can again adapt standard arguments for completeness of $L_2$ spaces to our setting; namely, show that Cauchy sequences converging in $L_2$ implies convergence almost everywhere, and then invoke completeness of the Hilbert space over matrices/vectors along with integral convergence theorems; see for example, the proof of Theorem 1.2, page 159 in \cite{Stein-Shakarchi-Analysis}.

Now to verify that this function is indeed the projection, we invoke the Hilbert Projection Theorem; see for example, Lemma 4.1 of \cite{Stein-Shakarchi-Analysis}.  To use this theorem, we need to check that the space spanned by functions of the form $g^{(S)}$ satisfying the condition $\mathbb{E}[g^{(S)} \ | \ X_i: i \in R] = 0$ for any $R \subset S$ is a closed subspace.  Linearity of the space follows from the fact that the sum of such functions satisfies the constraint; therefore it is a subspace.  To check closure, let $\| f \|^2 = \langle f,f \rangle$ and consider some (convergent) sequence in this subspace $(g_\alpha^{(S)})_{\alpha \geq 1}$ where $g_\alpha^{(S)} \rightarrow g^{(S)}$ and observe that, for any $R \subset S$:
\begin{align*}
\mathbb{E}[\|g_\alpha^{(S)} - g^{(S)}      \|^2] &= \mathbb{E}[ 
\ \mathbb{E}[ \| g_\alpha^{(S)} - g^{(S)}      \|^2 \ | \ X_i : i \in R] \ ]
\\ & \geq \mathbb{E}\left[\|\mathbb{E}[g_\alpha^{(S)} - g^{(S)} \ | \ X_i : i \in R]      \|^2  \right]
\\ & \geq \mathbb{E}\left[ \|\mathbb{E}[g^{(S)} \ | \ X_i : i \in R]\|^2\right]
\end{align*}
where above we used the fact that $\mathbb{E}[g_\alpha^{(S)} \ | \ X_i : i \in R] = 0$ for all $\alpha$ by assumption. Since the LHS converges to 0, it follows that $\mathbb{E}[g^{(S)} \ |  X_i : i \in R]$ must be equal to $0$ almost surely. Since the limit satisfies $\mathbb{E}[g^{(S)} \ |  X_i : i \in R] =0$ for all $R \subset S$, it belongs in the space, proving closure.

Now, we show that the stated expression is indeed the Hoeffding projection.  First, to show that belongs in this space, we have, following analogous reasoning to \cite{vandervaart-asymptotic}, for any $C \subset A$,
\begin{align*}
\mathbb{E}[ H^{(A)}(T) \ | \ X_i : i \in C] &= \sum_{B \subseteq A} (-1)^{|A| - |B|} \mathbb{E}[ T \ | \ X_i: i \in B \cap C ]
\\ &=  \sum_{D \subseteq C} \sum_{j=0}^{|A|-|C|} (-1)^{|A| - (|D|+j) } {|A|-|C| \choose j } \mathbb{E}[ T \ | \ X_i: i \in D ]
\\ &= \sum_{D \subseteq C} (-1)^{|C| - |D|} \  \mathbb{E}[ T \ | \ X_i: i \in D ] \ (1-1)^{|A|- |C|} = 0
\end{align*}
%
where the last line follows from the Binomial Theorem. Now as a consequence of the Hilbert Projection Theorem, it suffices to show that $ H^{(A)}(T)$ satisfies the property:
\begin{align*}
\langle  T- H^{(A)}(T), g^{(A)} \rangle_{L^2} = 0   
\end{align*}
for any $g^{(A)}$ in the space.  In the matrix case, we have
\begin{align*}
\langle T - H^{(A)}(T) , g^{(A)} \rangle_{L^2} =&  \sum_{j=1}^d \sum_{k=1}^d \mathbb{E}[(T_{jk} - \mathbb{E}[T_{jk} \ | \ X_i : i \in A]) \cdot g_{jk}^{(A)}]
%
 \\ + & \sum_{j=1}^d \sum_{k=1}^d \sum_{B \subset A} \mathbb{E}\left[ (-1)^{|A|-|B|}  \mathbb{E}[T_{jk} \ | \ X_i : i \in B ] \cdot \mathbb{E}[g_{jk}^{(A)} \ | \ X_i : i \in B]  \right]
\end{align*}
The first term above is $0$ since conditional expectations may be viewed as an orthogonal projection in the Hilbert Space with inner product $\int f g \ dP$ into the closed subspace of $\sigma( X_i : i \in A)$-measurable functions.  The second term is zero since $\mathbb{E}[g_{jk}^{(A)} \ | \ X_i : i \in B] = 0$ for any $B \subset A $.  The vector case is analogous.

Since this property holds, it must be the unique (up to measure 0 sets) minimizer and projection. 
\end{proof}
%
Now an immediate corollary for our setting follows.
\begin{proposition}[Orthogonality of Hoeffding projections]\label{prop:orth-hoeffding-proj}
Let:
\begin{align*}
B_n = \sum_{k=0}^n \sum_{|S|=k} H^{(S)} 
\end{align*}
where $A^{(S)}$ is the Hoeffding projection corresponding to the set $S \subseteq \{1, \ldots, n \}$.  Then,
 \begin{align*}
 \mathbb{E}\left[\norm{B_n}_F^2 \right] &= \sum_{k=0}^n \sum_{|S| = k } \mathbb{E}\left[\|A^{(S)}\|_F^2\right]\\  \mathbb{E}\left[ \|B_n x\|^2 \right] &= \sum_{k=0}^n \sum_{|S| = k } \mathbb{E}\left[\| A^{(S)}x\|^2\right]  
 \end{align*}
 where the last inequality holds for all $x \in \mathbb{R}^d$.
 \end{proposition}
 \begin{proof}
  Letting $ g^{(S)} =  H^{(S)}$ and $g^{(R)} = H^{(R)}$ in Eq \ref{eq-orthogonality-spaces}, we have that $\langle H^{(S)}, H^{(R)} \rangle_{L^2} = 0$ for all $R \neq S$ and the result follows. 
 \end{proof}
 %
 It remains to be shown that Hoeffding decomposition has the form stated in Eq \ref{eq:hayek-proj-decomp}.  Deriving all projections in the Hoeffding decomposition for a general function is typically non-trivial, but the product structure facilitates our proof below.
Before establishing the Hoeffding decomposition, following for example, \cite{bentkus-symmetric-edgeworth} observe that the following inverse relation holds:
\begin{proposition}[Conditional expectation and Hoeffding projections]
\label{prop-cond-exp-hoeffding}
 \begin{align*}
\mathbb{E}\left[ T \ | \ X_i : i \in S  \right] = \sum_{R \subseteq S} H^{(R)}(T)     
 \end{align*}
 \begin{proof}
 Observe that:
 \begin{align*}
\mathbb{E}[T \ | X_i : i \in S ] &=\sum_{k=0}^n \sum_{|R| =k }  \mathbb{E}[H^{(R)}(T) \ | \  X_i : i \in S]       
 \end{align*}
 Since the conditional expectation is zero for $R \not \subseteq S$ and for $R \subseteq S$, the Hoeffding projection is fixed, the result follows.
 \end{proof}
\end{proposition}
Now we are ready to establish the form of the Hoeffding projection for any $S \subseteq \{1, \ldots,n \}$.  We in fact prove a slightly stronger statement, which makes the induction argument more natural.  In what follows let $S[i]$ denote the $i$th element in $S$. We will also use $H^{(S)}$ instead of $H^{(S)}(T)$ when it is clear from the context.
\begin{theorem}[\label{thm:oja-hoeffding}Hoeffding projections for Oja's algorithm]
\label{thm-bootstrap-hoeff-proj}
Define:
\begin{align*}
T_{-j} = \prod_{i=j+1}^n \left( I+ \frac{\eta_n }{n} X_i X_i^T \right) , \ \ \ T = T_{-0} =  \prod_{i=1}^n \left( I+ \frac{\eta_n }{n} X_i X_i^T \right),
\end{align*}

Then for any $S \subseteq \{1, \ldots, n\}$ and for all $0 \leq j < S[1]$, we have the Hoeffding projection of $T_{-j}$ onto $\{ X_i : i \in S\}$ may be expressed as:
\begin{align}\label{eq:hminusj}
H_{-j}^{(S)} = \prod_{i=j+1}^n A_i^{(S)}, \ \ H^{(S)} = H_{-0}^{(S)} 
\end{align} 
%
where:
\begin{align*}
A_i^{(S)}  = \begin{cases}
\frac{\eta_n}{n} (X_i X_i^T - \Sigma) & i \in S \\ 
I+ \frac{\eta_n}{n} \Sigma  & i \not\in S
\end{cases}
\end{align*}
\end{theorem}
\begin{proof}
We will conduct (strong) induction on $k = |R|$, where $R \subseteq S$.  We will start with the base case $k=1$;  $k=0$ is simply the expectation.  For the base case $|R| =1$, a direct calculation is possible, since:
%
\begin{align*}
H_{-j}^{(R)} &= \mathbb{E}[ T_{-j} \ | \ X_i : i \in R] - \mathbb{E}[T_{-j}],
\end{align*}
which has the stated form.  Now, we will suppose that the inductive hypothesis holds.  In what follows, let $S[1] = k$ and define the conditional expectation for any set $S$ as:
\begin{align*}
\mathbb{E}\left[T_{-j} \ | \ X_i : i \in S \right] = \prod_{i=j+1}^n E_i^{(S)},
\end{align*}
where:
\begin{align*}
E_i^{(S)} = \begin{cases}
I + \frac{\eta_n}{n} X_i X_i^T  & i \in S \\ 
I +  \frac{\eta_n}{n} \Sigma &  i \not \in S
\end{cases}
\end{align*}
%
We will now add and subtract  a product where an entry corresponding to $S[1]$ in $\mathbb{E}[T_{-j} \ | \ X_i : i \in S ]$ is  replaced by $(I+ \frac{\eta_n}{n} \Sigma)$.  Doing, so we have  
\begin{align*}
\mathbb{E}[T_{-j} \ | \ X_i : i \in S] = & \  \mathbb{E}\left[T_{-j} \ | \ X_i : i \in  S \right]  - (I+ \frac{\eta_n}{n} \Sigma)^{k-j}  \times \prod_{i=k+1}^n E_i^{(S)}  \\ 
 & + (I+ \frac{\eta_n}{n} \Sigma)^{k-j} \times \prod_{i=k+1}^n E_i^{(S)}
\end{align*}
%
We recognize the second summand as $\mathbb{E}[T_{-j} \ | \ X_i :  i \in S_{-k} ]$, where $S_{-k} = \{ i \in S, i \neq k\}$.   Now for the first summand, taking the difference we have the term
\begin{align*}
& (I+ \frac{\eta_n}{n} \Sigma)^{k-j-1} \times \frac{\eta_n}{n} (X_k X_k^T - \Sigma)  \times \prod_{i=k+1}^n E_i^{(S)}
\\ = & \ (I+ \frac{\eta_n}{n} \Sigma)^{k-j-1} \times \frac{\eta_n}{n} (X_k X_k^T - \Sigma) \times \mathbb{E}\left[T_{-k} \ | \ X_i : i \in S_{-k} \right] 
\end{align*}
By Proposition \ref{prop-cond-exp-hoeffding}, we may represent a conditional expectation as:
\begin{align}\label{eq:bentkus_repr}
\mathbb{E}\left[ T_{-k} \ |  \ X_i : i \in S_{-k} \right] &= \sum_{ R \subseteq S_{-k} } H_{-k}^{(R)}
\end{align} 

Furthermore, by the inductive hypothesis, each $ H_{-k}^{(R)}$ takes the form in Eq~\ref{eq:hminusj}. Now, combining the two parts, we have 
\begin{align*}
 \ \mathbb{E}[T_{-j} \ | \ X_i : i \in S ] =& \ \sum_{ R \subseteq S_{-k} } (I+ \frac{\eta_n}{n} \Sigma)^{k-j-1} \times \frac{\eta_n}{n} (X_k X_k^T - \Sigma) \times  H_{-k}^{(R)}
\\  + &  \ \sum_{ R \subseteq S_{-k} } (I+ \frac{\eta_n}{n} \Sigma)^{k-j} \times  H_{-k}^{(R)}
\\ = & \  \prod_{i=j+1}^n A_i^{(S)} +  \sum_{ R \subset S } H_{-j}^{(R)}
\end{align*}
For the last step, notice that with the exception of $ R = S_{-k}$ in the first sum, each product in the sum corresponds to a Hoeffding projection of  some set of size less than $k$ by the inductive hypothesis. The first term must be the Hoeffding projection onto $S$ (with $S[1]=k> j$) by the same argument as Eq~\ref{eq:bentkus_repr}, i.e. 
\bas{
H_{-j}^{(S)}=\prod_{i=j+1}^n A_i^{(S)},
}
proving the desired result. 
\end{proof}
Now, since the Hoeffding decomposition is a sum of Hoeffding projections by definition, we have the following corollary.
\begin{corollary}[Hoeffding decomposition for Oja's algorithm]
\label{cor-hoeff-decomp-oja}
\begin{align*}
B_n = \sum_{k=0}^n \sum_{|S| = k} H^{(S)}    
\end{align*}
where $A^{(S)}$ is given by $H^{(S)}$ in Eq \ref{eq:hminusj}. 
\end{corollary}

It turns out that the bootstrap Hoeffding decomposition can be proved using the same strategy in Theorem \ref{thm-bootstrap-hoeff-proj}, where $X_1, \ldots, X_n$ is treated as fixed in the bootstrap measure.  We state the result below.

\begin{proposition}[Hoeffding decomposition for the bootstrap]
\label{prop:boothoeff}
\begin{align*}
B_n^* = \sum_{k=0}^n \sum_{|S| = k} \alpha^{(S)}    
\end{align*}
where $\alpha^{(S)} = \prod_{i=1}^n \alpha_i^{(S)}$ and  $\alpha_{i}^{(S)}$ is given by: 
\begin{align*}
\alpha_{i}^{(S)} &= \begin{cases}
 \frac{\eta_n}{n} W_i \cdot (X_iX_i^T - X_{i-1}X_{i-1}^T ) & \text{if} \ i \in S  \\ 
 I + \frac{\eta_n}{n} X_iX_i^T & \text{otherwise}
\end{cases}
\end{align*}

\end{proposition}

\section{Central limit theorem for Oja's algorithm}
\label{sec:B}

\subsection{Proof of Theorem~\thmojaclt}\label{sec:ojaclt}

\begin{proof}[Proof of Theorem \thmojaclt]
Our strategy will be to approximate $\sin^2$ distance for estimated eigenvector with a quadratic form, and invoke a high-dimensional central limit theorem result.  The remainder terms will be bounded using an anti-concentration result for weighted $\chi^2$ random variables due to \cite{10.1093/biomet/asz020}.

Observe that $\sin^2(\hat{v}_1, v_1)$ has the representation:
\begin{align*}
1- \left(v_1^T\frac{B_n u_0}{\norm{B_n u_0}} \right)^2 = \frac{u_0^T B_n^T(I - v_1v_1^T)B_n u_0}{\norm{B_nu_0}^2}  
\end{align*}

Let $V_{\perp}V_{\perp}^T = I - v_1v_1^T$.  Clearly, $V_{\perp}V_{\perp}^T$ is idempotent and is a projection matrix, implying that it is also symmetric.  Therefore,
\begin{align}
\label{eq-inner-prod-form}
\frac{n}{\eta_n} \cdot \sin^2(u_n, v_1) =  \frac{(\sqrt{n/\eta_n} V_{\perp}V_{\perp}^T B_n u_0)^T(\sqrt{n/\eta_n} V_{\perp}V_{\perp}^T B_n u_0)}{\norm{B_n u_0}^2}
\end{align} 

Let $a_1 = (v_1^T u_0)$ denote the scalar projection of $u_0$ so that $u_0 =  a_1 v_1 + w$, where $w$ is in the orthogonal complement of $v_1$.

Our first reduction of (\ref{eq-inner-prod-form}) is to approximate the denominator with a more convenient quantity.  By Lemma \ref{lemma-norm-concentration}, we have that (\ref{eq-inner-prod-form}) may be written as
%
\begin{align*}
\frac{(\sqrt{n}/\eta_n \cdot V_{\perp}V_{\perp}^T B_n u_0)^T(\sqrt{n}/\eta_n \cdot V_{\perp}V_{\perp}^T B_n u_0)}{a_1^2(1 + \frac{\eta_n}{n} \lambda_1)^{2n}} \cdot R_1
\end{align*} 
where
\begin{align*}
R_1 = \frac{\norm{B_n u_0}^2}{a_1^2(1 + \frac{\eta_n}{n}  \lambda_1)^{2n}} = 1 - O_P\left(\sqrt{d} \ \mathrm{exp} \left(-\frac{\eta_n}{2}(\lambda_1-\lambda_2)\right) +   \sqrt{\frac{\eta_n^2 M_d \log d}{n}}\right)
\end{align*}
While the aforementioned Lemma is stated for $\frac{\norm{B_n u_0}}{|a_1|(1 + \frac{\eta_n}{n} \lambda_1)^{n}}$, the relationship holds for the squared quantity since with high probability for $n$ large enough, $|\frac{\norm{B_n u_0}}{|a_1|(1 + \frac{\eta_n}{n} \lambda_1)^{n}}| \leq 2$ and $|x^2-1^2| \leq 3 |x-1|$ for all $-2 \leq x \leq 2$.  

%
We will further approximate the quantity $\sqrt{n}/\eta_n \cdot V_{\perp}V_{\perp}^T B_n u_0$.  First we will bound the contribution of $V_{\perp}V_{\perp}^T B_n V_\perp V_\perp^T$.  By Lemma \ref{lem:CLTVperp} we have that:
\begin{align*}
R_2 : = \sqrt{\frac{n}{\eta_n}} \cdot \frac{V_\perp V_\perp^T B_n V_\perp V_\perp^T u_0}{|a_1|(1+\frac{\eta_n \lambda_1}{n})^{n}} = O_P\left( \sqrt{\frac{n d}{\eta_n}} \cdot \exp\{- \eta_n(\lambda_1 - \lambda_2)\}  + \sqrt{\frac{\eta_n^2 M_d^2\log d}{n}} \right) 
\end{align*}
Now it remains to bound the term $V_\perp V_\perp^T B_n v_1 (v_1^Tu_0)$. First, by Corollary \ref{cor-hoeff-decomp-oja}, $B_n$ can be decomposed as:
%
\begin{align*}
B_n = \sum_{k=0}^n T_k
\end{align*}
where for $S \subseteq \{1, \ldots, n\}$, $T_k$ is defined as:
\begin{align}
T_k = \sum_{ |S| = k } A^{(S)}
\end{align}
with $A^{(S)}$ taking the form in Eq \ref{eq:hminusj}.

Since $v_1$ is orthogonal to $V_{\perp}$:
\begin{align*}
\sqrt{\frac{n}{\eta_n}} \cdot \frac{V_{\perp}V_{\perp}^T T_0 \ v_1 a_1}{|a_1|(1+\eta_n/n \lambda_1)^n} = \sqrt{\frac{n}{\eta_n}} \cdot  \textrm{sign}(a_1) (I-v_1v_1^T)v_1 = 0.  
\end{align*}
%

%
Furthermore, by Lemma \ref{lemma-clt-hayek}, since $\frac{\eta_n^3M_d^2}{n} \rightarrow 0$
by assumption, 
\begin{align}
R_3 := \sqrt{\frac{n}{\eta_n}} \cdot \frac{V_{\perp}V_{\perp}^T (B_n-T_1)v_1 a_1}{|a_1|(1+\eta_n/n \lambda_1)^n} = O_P\left(\sqrt{ \frac{\eta_n^3M_d^2}{n}}\right)
\end{align}
%
Now our term of interest is given by:
%
\begin{align}
\label{eq-inner-product-expression}
\frac{(\sqrt{n/\eta_n} \cdot V_{\perp}V_{\perp}^T T_1 v_1)^T(\sqrt{n/\eta_n} \cdot  V_{\perp}V_{\perp}^T T_1 v_1)}{(1 + \frac{\eta_n}{n} \lambda_1)^{2n}}
\end{align} 
%
Now, observe that $ (I + \frac{\eta_n}{n} \Sigma)$ and $v_1 v_1^T$ share a common eigenspace and therefore commute.  Therefore, the terms in the product to the left of $T_1$ may be written as:
\begin{align}\label{eq:di}
\frac{V_\perp V_\perp^T (I + \frac{\eta_n}{n} \Sigma)^{i-1}}{(1+\frac{\eta_n}{n} \lambda_1)^{i-1}} = \sum_{j=2}^d \left(\frac{1 + \frac{\eta_n}{n} \lambda_j }{1 + \frac{\eta_n}{n} \lambda_1}\right)^{i-1} v_j v_j^T := D_{i-1}, \ \ \ \text{say}. 
\end{align}
Hence,
\begin{align*}
 \sqrt{\frac{n}{\eta_n}} \cdot \frac{V_{\perp}V_{\perp}^T T_1 v_1}{(1 + \frac{\eta_n}{n} \lambda_1)^{n}} &= \sqrt{\frac{\eta_n}{n}} \sum_{i=1}^n \left(1 + \frac{\eta_n}{n} \lambda_1\right)^{-1} D_{i-1}(X_iX_i^T - \Sigma)v_1
 \\ &=  \ S_n = \sqrt{n} \left(1 + \frac{\eta_n}{n} \lambda_1\right)^{-1} \frac{1}{n}\sum_{i=1}^n U_i, \ \ \ \text{say},      
\end{align*}
%
where
\begin{align}\label{eq:Ui}
    U_i=D_{i-1}(X_iX_i^T - \Sigma)v_1.
\end{align}
Observe that $S_n$ is a sum of independent but non-identically distributed random variables with mean $0$.  Therefore, if the conditions of Proposition \ref{prop:clt-adaptation} are satisfied, we may approximate $S_n^T S_n$ with $Z_n^TZ_n$, where $\mathbb{E}[Z_n] = 0$, $\mathrm{Var}(Z_n) = \mathrm{Var}(S_n)$.  Below define $\tilde{Z}_i$ to be a Gaussian vector with $\var(\tilde{Z}_i)=\var( (X_iX_i^T-\Sigma)v_1)$. Now define $Z_i=D_{i-1}\tilde{Z}_i$. 
We now verify these conditions.

First, we derive a lower bound on  $\norm{\bigvar}_F$  that will be used in all of the following bounds. Observe that $\norm{\bigvar}_F=\frac{\eta_n}{n}\norm{\sum_i \Lambda_\perp^{i-1}\bM\Lambda_\perp^{i-1}}_F$ and the $kl$th entry of $\sum_i \Lambda_\perp^{i-1}\bM\Lambda_\perp^{i-1}$ is lower bounded by:
\begin{align}
\begin{split}
\label{eq-lower-bound-V-bar}
&\frac{\eta_n}{n}\sum_{i\geq 1}\left(\frac{1+\eta_n\lambda_{k+1}/n}{1+\eta_n\lambda_1/n}\right)^{i-1}\left(\frac{1+\eta_n\lambda_{\ell+1}/n}{1+\eta_n\lambda_1/n}\right)^{i-1}\bM(k,\ell) 
\\ & \geq \frac{1 - \exp( -2\eta_n (\lambda_1 - \lambda_2)) \left(1- \frac{\eta_n^2 \lambda_1^2}{n} \right)^{-2}}{{2\lambda_1-(\lambda_{k+1}+ \lambda_{k+1}) + \frac{\eta_n}{n}(\lambda_1^2 - \lambda_k\lambda_l) }} \bM(k,\ell) 
\\ & \geq \frac{1 - \exp( -2\eta_n (\lambda_1 - \lambda_2)) \left(1- \frac{\eta_n^2 \lambda_1^2}{n} \right)^{-2}}{{2\lambda_1  + \frac{\eta_n}{n}\lambda_1^2 }} \bM(k,\ell) 
\\ & \geq \frac{c}{\lambda_1} \ \bM(k,\ell) 
\end{split}
\end{align}
for some $c > 0 $ and $n$ large enough since $\exp(-\eta_n(\lambda_1 - \lambda_2)) \rightarrow 0$.

For the first term of $L_q$, $q=3$ we have
\begin{align*}
    L_{3,1}^U&\leq \frac{1}{\sqrt{n}}\max_i \frac{\E(U_i^T \bigvar U_i)^{3/2}}{\|\bigvar\|_F^{3}}\\
    &\leq \frac{M_d^{3/2}}{\sqrt{n}}\frac{\E\|V_\perp^T(X_iX_i^T-\Sigma)v_1\|^3}{\|\bigvar\|_F^3}\qquad \mbox{ Since $\|\bigvar\|\leq M_d \eta_n$ from Eq~\ref{eq:bigvar}}\\
    &\leq C\frac{M_d^{3/2}\eta_n^3 \lambda_1^3}{\sqrt{n}}\E\left(\frac{\|V_\perp^TX_1X_1^T v_1\|}{\|\bM\|_F}\right)^3\\
\end{align*}

Similarly, for the Gaussian analog, we have that:
\begin{align*}
    L_{3,1}^Z&\leq \frac{1}{\sqrt{n}}\max_i \frac{\E(Z_i^T \bigvar Z_i)^{3/2}}{\|\bigvar\|_F^{3}}\\
    &\leq \frac{M_d^{3/2}\eta_n^{3/2}}{\sqrt{n}}\max_i\frac{\E\|Z_i\|^3}{\|\bigvar\|_F^3}\\
     &\leq \frac{M_d^{3/2}\eta_n^{3/2}}{\sqrt{n}}\frac{\E\|\tilde{Z}_i\|^3}{\|\bigvar\|_F^3}\\
    &\leq C\frac{M_d^{3/2}\eta_n^3 \lambda_1^3}{\sqrt{n}}\E\left(\frac{\|\tilde{Z}_1\|}{\|\bM\|_F}\right)^3\\
\end{align*}


%
For the second term, using the definition of $U_i$ in Eq~\ref{eq:Ui} we have:
%
\begin{align*}
    L_{3,2}^U&\leq \frac{1}{n}\max_{i<j} \frac{E|U_i^T U_j|^3}{\|\bigvar\|_F^{3}}\\
    &=\frac{1}{n}\max_{i<j} \frac{E|v_1^T (X_iX_i^T-\Sigma)D_{i+j-2}(X_jX_j^T-\Sigma)v_1|^3}{\|\bigvar\|_F^{3}}\\
    &\leq \frac{1}{n}\frac{\left(\E\|V_\perp^T(X_iX_i^T-\Sigma)v_1\|^3\right)^2}{\|\bigvar\|_F^3}\leq \frac{\eta_n^3 \lambda_1^3}{ n}\frac{\left(\E\|V_\perp^T(X_iX_i^T)v_1\|^3\right)^2}{\|\bM\|_F^3}\\
\end{align*}
%

For $K_3$, we have:
\begin{align*}
    K_3^{3} &= \frac{1}{n} \sum_{i=1}^n \E\left|\frac{U_i^TU_i - E(U_i^TU_i)}{f} \right|^3\\
    &\leq \max_i \frac{\E (U_i^T U_i)^3+(E U_i^T U_i)^3}{f^3}\leq 2\max_i \frac{\E (U_i^T U_i)^3}{\|\bigvar\|_F^3}\\
    &\leq 2\eta_n^3\lambda_1^{3}\frac{\E \|V_\perp^T(X_iX_i^T-\Sigma)v_1\|^6}{\|\bM\|_F^3}\\
\end{align*}
%
Finally, for $J_1$ we have:
\begin{align*}
     J_n &= \frac{\sum_{i=1}^n \var(U_i^T U_i)}{(nf)^2} \leq \frac{\sum_{i=1}^n\E (U_i^T U_i)^2}{n^2f^2} \\
     &\leq \frac{\eta_n^2\lambda_1^2}{n} \frac{\E[\|V_\perp(X_1X_1^T-\Sigma)v_1\|^4]}{\|\bM\|_F^2}
\end{align*}
%
The first makes $L_{3,2}$, $K_3^3/n$ and $J_n$ go to zero. The two conditions also imply $\frac{\E[\|V_\perp(X_1X_1^T-\Sigma)v_1\|^3]}{\|\bM\|_F^3}=o(\sqrt{n})$, which implies $L_{3,1}\rightarrow 0$.

Finally, we collect remainder terms  and show that their contribution to the inner product is negligible using anti-concentration. Observe that,
 \begin{align}
 \label{eq-kolomogorov-bound}
 \begin{split}
& \ \sup_{t \in \mathbb{R}} \left| P(n/\eta_n \sin^2(w,v) \leq t ) - P(Z_n^TZ_n \leq t) \right|
\\ = & \ \sup_{t \in \mathbb{R}} \left| P\left(R_1 \cdot \frac{(S_n + R_2 + R_3)^T(S_n + R_2 + R_3)}{ f }   \leq t \right) - P\left( \frac{Z_n^TZ_n}{f}  \leq t \right)  \right|
\end{split}
\end{align}
Now will will lower bound the above quantity.  Observe that  
\begin{align}
\begin{split}
\label{eq-clt-lower-bound-1}
 & \  P\left(R_1 \cdot \frac{(S_n + R_2 + R_3)^T(S_n + R_2 + R_3)}{ f }   \leq t \right) \\
 \geq & \  P\left( R_1 \cdot  \frac{S_n^TS_n}{f} \left(1 + \frac{2 \norm{R_2} + 2\norm{R_3}_2 }{\sqrt{S_n^TS_n}  }\right) + \frac{R_1 \cdot \norm{ R_2 +R_3}^2}{f}  \leq t \right) \\ 
 = & \  P\left(  R^\prime \cdot \frac{S_n^TS_n}{f}  + \widetilde{R} \leq t \right), \ \text{say}.
 \end{split}
\end{align}

Now, for $\delta_n= o(\sqrt{f})$,
we have that:%
\begin{align}
\label{eq:delta_denom-bound}
P\left( S_n^TS_n \leq \delta_n^2 \right) \leq \sup_{t \in \mathbb{R}} \left|P( S_n^TS_n \leq t) - P(Z_n^T Z_n \leq t)\right| + P(Z_n^T Z_n \leq \delta_n^2) \rightarrow 0
\end{align}
 Note that $\delta_n = o(1)$ suffices since $f$ is bounded away from zero under Eq \ref{eq:clt-lb} as shown in Eq \ref{eq-lower-bound-V-bar}.

Now, choose $\epsilon_n$ satisfying $\epsilon_n = o(1)$ $\epsilon_n = \omega\left(\sqrt{\frac{\eta_n^3 M_d^2 \log d}{n}} \right)$, define the set:
\begin{align*}
\mathcal{G} = \biggl\{\left|R^\prime - 1\right| \leq \epsilon_n, \  \bigl|\widetilde{R} \bigr| \leq \epsilon_n \biggr\}      
\end{align*}
so that $P(\mathcal{G}^c) \rightarrow 0$ with the choice of $\delta_n$ in Eq. \ref{eq:delta_denom-bound}.  
By using the fact that, for any two sets $A$ and $B$, $1 \geq P(A) + P(B) - P(A \cap B)$ and hence $P(A \cap B) \geq P(A) - P(B^c)$, we have that:
\begin{align}
\label{eq-clt-lower-bound-2}
\begin{split}
& \ P\left(  R^\prime \cdot \frac{S_n^TS_n}{f}  + \widetilde{R} \leq t \right) \\ 
= &  \ P\left( R^\prime \cdot  S_n^TS_n/f  + \widetilde{R} \leq   t \ \cap \ \mathcal{G}  \right) +  P\left( R^\prime \cdot  S_n^TS_n/f  + \widetilde{R}  \leq  t \ \cap \  \mathcal{G}^c  \right) \\ 
\geq & \ P\left( \frac{S_n^TS_n}{f} \leq \frac{t}{1+\epsilon_n} - \epsilon_n \right) - P(\mathcal{G}^c) 
\end{split}
\end{align}
Therefore, 
\begin{align}
\label{eq-clt-lower-bound-3}
\begin{split}
& \  P\left(\frac{n/\eta_n \sin^2(w,v)}{f} \leq t \right) - P\left(\frac{Z_n^TZ_n}{f} \leq t\right)  \\
\geq  & \   P\left( \frac{S_n^TS_n}{f} \leq \frac{t}{1+\epsilon_n} - \epsilon_n \right) - P\left( \frac{Z_n^TZ_n}{f} \leq  \frac{t}{1+\epsilon_n} - \epsilon_n  \right) \\ 
 & \ + P\left( \frac{Z_n^TZ_n}{f} \leq  \frac{t}{1+\epsilon_n} - \epsilon_n  \right) - P\left( \frac{Z_n^TZ_n}{f} \leq t \right) - P(\mathcal{G}^c) = I + II - III
 \end{split}
\end{align}
Now, we may upper bound $III \rightarrow 0$ arising from our choice of $\delta_n$, and $II$ goes to $0$ if the conditions of Proposition \ref{prop:clt-adaptation} are satisfied, and $I \rightarrow 0$ due to Proposition \ref{prop-chisq-ac}.

Now for the upper bound, since $\norm{R_i}_2 \geq 0$, observe that we may bound Eq \ref{eq-kolomogorov-bound} with:
 \begin{align*}
& \  P\left(R_1 \cdot \frac{(S_n + R_2 + R_3)^T(S_n + R_2 + R_3)}{ f }   \leq t \right) \\ 
 \leq & \  P\left(R_1 \cdot \frac{S_n^TS_n}{f} \left(1 -\frac{ 2\norm{ R_2} + 2\norm{ R_3}}{ \sqrt{S_n^TS_n}}\right) - \frac{R_1 \cdot \norm{R_2} \norm{R_3}  }{f}   \leq t \right)
\end{align*}
We may now lower bound the negative terms and arrive at an identical expression to the lower bound. The result follows. 
\end{proof}

With the central limit theorem in hand, we are now ready to give the proof for Corollary~\ref{cor:sinsq}.  
\bigskip 

\noindent \textit{Proof of Corollary~\ref{cor:sinsq}.}
Observe that the approximating distribution $Z_n^TZ_n$ has expectation $\mathrm{trace}(\bar{\mathbb{V}}_n)$ and variance $f =  \norm{\bar{\mathbb{V}}_n}_F$.  Therefore, for any $M >0$, it follows that:
\begin{align*}
&  \ P\left( \frac{n/\eta_n \sin^2(\hat{v}_1, v_1) - \mathrm{trace}(\bar{\mathbb{V}}_n)}{f} > M \right) \\ 
\leq  & \ \sup_{t \in \mathbb{R}} \left|P\left(n/\eta_n \sin^2(\hat{v}_1, v_1) > t \right) - P\left( Z_n^T Z_n > t \right) \right| + P\left(\frac{ Z_n^TZ_n -\mathrm{trace}(\bar{\mathbb{V}}_n)}{f}  > M \right)
\end{align*}
The first term goes to zero under the conditions of Theorem 1. Chebychev's inequality implies that there exists $M >0$ such that the latter probability can be made smaller than $\epsilon/2$ for any $\epsilon > 0$.  Hence,
\begin{align*}
\frac{n/\eta_n \sin^2(\hat{v}_1, v_1) - \mathrm{trace}(\bar{\mathbb{V}}_n)}{f} = O_P(1).    
\end{align*}
Therefore, under the conditions in Theorem 1,
\begin{align*}
\sin^2(\hat{v}_1, v_1) = \frac{\eta_n}{n} \left[ \mathrm{trace}(\bar{\mathbb{V}}_n) + O_P\left(  \norm{\bar{\mathbb{V}}_n}_F \right)\right]
\end{align*}
We now derive bounds for $\mathrm{trace}(\bar{\mathbb{V}}_n)$ and $\norm{\bar{\mathbb{V}}_n}_F$.     
Let  $\Lambda_\perp$ be a diagonal matrix with $\Lambda_\perp(i,i)=(1+\eta_n\lambda_{i+1}/n)/(1+\eta_n\lambda_1/n)$, $i=1,\dots,d-1$.  Recall that:
 \ba{
 \mathbb{M} :=  \mathbb{E}\left[V_\perp^T(X_1^Tv_1)^2 X_1X_1^T V_\perp \right ].
 }
 %
\begin{align*}
    \bigvar 
   &= \frac{\eta_n}{n}V_\perp \left(\sum_i \Lambda_\perp^{i-1}\bM\Lambda_\perp^{i-1}\right)V_\perp^T
\end{align*}
So now observe that,
\bas{
\norm{\bigvar}_F&=\frac{\eta_n}{n}\norm{\sum_i \Lambda_\perp^{i-1}\bM\Lambda_\perp^{i-1}}_F\\
\trace(\bigvar)&= \frac{\eta_n}{n}\trace\bb{\sum_i \Lambda_\perp^{i-1}\bM\Lambda_\perp^{i-1}}
}
A direct calculation shows that the $k,\ell^{th}$ entry of the sum $\sum_i \Lambda_\perp^{i-1}\bM\Lambda_\perp^{i-1}$ is:

\begin{align}
\begin{split}
\label{eq-cov-mat-derivation}
&\sum_{i\geq 1}\left(\frac{1+\eta_n\lambda_{k+1}/n}{1+\eta_n\lambda_1/n}\right)^{i-1}\left(\frac{1+\eta_n\lambda_{\ell+1}/n}{1+\eta_n\lambda_1/n}\right)^{i-1}\bM(k,\ell)\\
&\leq\frac{n \bM(k,\ell)}{\eta_n}\frac{(1+\frac{\lambda_1\eta_n}{n})^2}{2\lambda_1-(\lambda_{k+1}+ \lambda_{k+1}) + \frac{\eta_n}{n}(\lambda_1^2 - \lambda_k\lambda_l) }\\
&\leq \frac{n}{\eta_n}\frac{C \bM(k,\ell)}{\lambda_1 - \lambda_2 }
\end{split}
\end{align}
for some $0 < C < \infty$.

Therefore, by Eq~\ref{eq:bigvar}, we have 
\bas{\trace(\bigvar) &\leq C\frac{\trace(\bM)}{\lambda_1-\lambda_2}\leq C\frac{M_d}{\lambda_1-\lambda_2}\\
\|\bigvar\|_F&\leq \frac{C\|\bM\|_F}{\lambda_1-\lambda_2}\leq C'\frac{M_d}{\lambda_1-\lambda_2}
}
The last step is true since:
\bas{
\trace(\bM)&=\trace(\mathbb{E}\left[V_\perp^T(X_1^Tv_1)^2 X_1X_1^T V_\perp \right ]) \\
&= \trace(\mathbb{E}\left[V_\perp^T(X_1X_1^T-\Sigma)v_1v_1^T (X_1X_1^T-\Sigma) V_\perp \right ])\\
& = \E\bb{\trace\left[V_\perp^T(X_1X_1^T-\Sigma)v_1v_1^T (X_1X_1^T-\Sigma) V_\perp \right ]}\\
& = \E\|V_\perp^T(X_1X_1^T-\Sigma)v_1\|^2 \leq  M_d
}
Similarly, 
\bas{
\norm{\bM}_F&=\left\|\mathbb{E}\left[V_\perp^T(X_1^Tv_1)^2 X_1X_1^T V_\perp \right ]\right\|_F \\
&=  \norm{\mathbb{E}\left[V_\perp^T(X_1X_1^T-\Sigma)v_1v_1^T (X_1X_1^T-\Sigma) V_\perp \right ]}_F\\
& \leq \E\|X_1X_1^T-\Sigma\|_{op}^2=  M_d    
}
where in the last line we used the fact that $\norm{xx^T }_{op} = \norm{xx^T}_F$ for $x \in \mathbb{R}^d$ since $xx^T$ is rank 1. \qed \\

\subsection{Adaptation of high-dimensional central limit theorem}\label{sec:CLTadapt}
Let $U_1, \ldots, U_n,$ be independent random vectors in $\mathbb{R}^p$ such that $E(U_i) = 0$ and $\mathrm{Var}(U_i) = \mathbb{V}_i$. Define a Gaussian analog of $Y_i$, denoted $Z_i$, which satisfies $E(Z_i) = 0$ and $\mathrm{Var}(Z_i) = \mathbb{V}_i$.  Furthermore, let $\bar{\mathbb{V}}_n = \frac{1}{n} \sum_{i=1}^n \mathbb{V}_i$, $g_i = \mathrm{Var}(U_i^T U_i)$, $f_1 = \mathrm{trace}(\bar{\mathbb{V}}_n)$, and $f = \norm{\bar{\mathbb{V}}_n}_F$.  For $0 < \delta \leq 1$, $q = 2 + \delta$, and $\beta \geq 2$ define the following quantities:
\begin{align*}
L_q^U &=          \frac{1}{n}\sum_{i=1}^n  \frac{E(U_i^T \bar{\mathbb{V}}_n U_i)^{q/2}}{n^{\delta/2} f^q}          +   \frac{1}{{n \choose 2}}\sum_{1\leq i<j 
\leq n}\frac{E(|U_i^TU_j|^q)}{n^\delta f^q  } \\ 
L_q^Z &= \frac{1}{n}\sum_{i=1}^n \frac{E(Z_i^T \bar{\mathbb{V}}_n Z_i)^{q/2}}{n^{\delta/2} f^q}  \\ 
K_\beta^{\beta} &= \frac{1}{n} \sum_{i=1}^n E\left|\frac{U_i^TU_i - E(U_i^TU_i)}{f} \right|^\beta \\ 
 J_n &= \frac{\sum_{i=1}^n g_i}{(nf)^2} 
\end{align*} 

The following proposition is an adaptation of ~\cite{10.1093/biomet/asz020}, which is stated for IID random variables, to independent but non-identically distributed random variables.  While the changes are minor, we provide a proof below detailing the adaptation for completeness.   

\begin{proposition}\label{prop:clt-adaptation}
Suppose that $L_q^U \rightarrow 0$, $L_q^Z \rightarrow 0$, $J_n \rightarrow 0$, $ n^{1-\beta}K_\beta^\beta \rightarrow 0$.  Then,
\begin{align*}
\sup_{t \in \mathbb{R}} \left|P\left(  n\bar{U}_n^T\bar{U}_n \leq t  \right) - P\left(  n\bar{Z}_n^T\bar{Z}_n \leq t  \right) \right| \rightarrow 0
\end{align*}
\end{proposition}
\begin{proof}
Since a Lindeberg argument is easier with diagonals removed, we will show that the removal of these terms is negligible.  Observe that:
\begin{align*}
  & \ \sup_{t \in \mathbb{R}} \left| P( n \bar{U}_n^T \bar{U}_n \leq t) - P(n \bar{Z}_n^T \bar{Z}_n \leq t)  \right |  \\ 
 \leq  & \ \sup_{t^\prime \in \mathbb{R}} \left| P\left(\frac{n \bar{U}_n^T \bar{U}_n - f_1}{f} \leq t^\prime  \right) - P\left( \frac{\sum_{i\neq j} U_i^T U_j}{nf} \leq t^\prime \right) \right|
\\ + & \ \sup_{t^\prime \in \mathbb{R}} \left| P\left( \frac{\sum_{i\neq j} U_i^T U_j }{nf} \leq t^\prime \right) - P\left(\frac{\sum_{i\neq j} Z_i^T Z_j}{nf} \leq t^\prime \right) \right|
\\ + &  \  \sup_{t^\prime \in \mathbb{R}} \left| P\left(\frac{\sum_{i\neq j} Z_i^T Z_j }{nf} \leq t^\prime \right)- P\left( \frac{n \bar{Z}_n^T \bar{Z}_n - f_1}{f} \leq t^\prime \right)\right| 
\\ =& \ I +II + III, \  \text{say}.
\end{align*}
We will start by bounding $III$. First note that $\frac{1}{\sqrt{n}} \sum_{i=1}^n Z_i \sim  \mathcal{N}(0, \bar{\mathbb{V}}_n)$.  Let $\bar{\mathbb{V}}_n =  Q^TD Q$ denote the eigendecomposition, with diagonal entries of $D$ given by $\lambda_1 \geq  \ldots \geq  \lambda_d$ and let $g \sim \mathcal{N}(0, \mathrm{I}_d)$. It follows that:
\begin{align*}
n \bar{Z}_n^T\bar{Z}_n & \stackrel{d}{=} (Q D^{1/2} Q^T g)^T (Q D^{1/2} Q^Tg)
\\ &\stackrel{d}{=} g^T D g 
\end{align*}
Notice that $V :=  g^T D g \sim \sum_{r=1}^d \lambda_r \eta_r$, where $\eta_1, \ldots, \eta_d \sim \chi^2(1)$.  Now define $R_n^Z = \frac{\frac{1}{n}\sum_{i=1}^nZ_i^T Z_i- f_1}{f}$.
Notice that:
\begin{align}
\label{eq:z-anticoncentration}
\begin{split}
& \  P\left( \frac{n \bar{Z}_n^T\bar{Z}_n - f_1 }{f} \leq t \right) - P\left( \frac{\sum_{i \neq j} Z_i^T Z_j}{f} \leq t \right)
\\ = & \ P\left( \frac{n \bar{Z}_n^T\bar{Z}_n - f_1 }{f} \leq t \right) -  P\left( \frac{n \bar{Z}_n^T\bar{Z}_n - f_1 }{f} - R_n^Z \leq t \right)
\\  \leq & \  P(t^\prime \leq V \leq  t^\prime+h_n) + P( |R_n^Z| > h_n)
\end{split}
\end{align}
Under the conditions $J_n \rightarrow 0$, $n^{1-\beta}K_\beta^\beta \rightarrow 0$, Nagaev's inequality implies that one may choose $h_n \rightarrow 0$ such that $P( |R_n^Z| > h_n) \rightarrow 0$.  The desired anti-concentration for the first term in the previous display follows from Lemma S2 of \cite{10.1093/biomet/asz020}.  We may also derive the lower bound $ P(t^\prime \leq V \leq  t^\prime+h_n) - P( |R_n^Z| > h_n)$ in a similar manner.

To adapt $II$, consider the smoothed indicator function:
%
\begin{align*}
g_{\psi,t}(x) = \left[1-\min \{1, \max(x-t, 0) \}^4\right]^4.
\end{align*}
This function satisfies:
\begin{align*} & \max_{x,t} \{ |g_{\psi,t}^\prime(x)| + |g_{\psi,t}^{\prime \prime}(x)| + |g_{\psi,t}^{\prime \prime \prime}(x)| \} < \infty \\ 
& \mathbbm{1}_{x \leq t} \leq g_{\psi,t} \leq \mathbbm{1}_{x \leq t +\psi^{-1}}.
\end{align*}
Therefore, we may bound the approximation error with smoothed indicator function by again using anti-concentration of the weighted $\chi^2$.   In what follows, let:
\begin{align*}
S_n^U = \frac{1}{nf}\sum_{i \neq j} U_i^T U_j, \ \ \ S_n^Z = \frac{1}{nf}\sum_{i \neq j} Z_i^T Z_j
\end{align*}
We have that:
\begin{align*}
\begin{split}
\ & \  P(S_n^U \leq t) - P(S_n^Z \leq t)  \\ 
\leq & \ P(S_n^U \leq t) - P(S_n^Z \leq t + \psi^{-1}) +  P(S_n^Z \leq t+ \psi^{-1}) - P(S_n^Z \leq t) \\ 
\leq & \  Eg_{\psi,t}(S_n^U) -Eg_{\psi,t}(S_n^Z) + III + P(t \leq V \leq t+ \psi^{-1}). 
\end{split}
\end{align*}
An analogous argument establishes a lower bound of $g_{\psi,t}(S_n^U) -Eg_{\psi,t}(S_n^Z) - III - P(t-\psi^{-1} \leq V \leq t)$. Choosing $\psi_n \rightarrow \infty$, the last term goes to zero. A Lindeberg telescoping sum argument leads to the following bound for the leading term:
\begin{align*}
 \left|Eg_{\psi,t}(S_n^U) -Eg_{\psi,t}(S_n^Z)\right| \leq \sum_{i=1}^n c_q (E|\Delta_i|^q + E|\Gamma_i|^q), 
\end{align*}
%
where:
%
\begin{align*}
H_i = \sum_{j=1}^{i=1} U_i + \sum_{j=i+1}^{n} Z_i, \ \  \Delta_i = \frac{U_i^TH_i}{nf}, \ \ \ \Gamma_i =  \frac{Z_i^TH_i}{nf}.   
\end{align*}
We may use analogous reasoning to bound these terms. Let $\xi \sim N(0,1)$. Conditioning on $U_1 = u_i$, by Rosenthal's inequality:
\begin{align}
\label{eq:delta-bound}
\begin{split}
 \mathbb{E}\left[\bigr|\Delta_i\bigr|^q \ \rvert \  U_i \right] \leq & \ \sum_{j=1}^{i-1} \frac{\mathbb{E}[|U_j^T u_i|^q]}{n^q f^q} + \sum_{j=i+1}^n\frac{\mathbb{E}[|Z_j^Tu_i|^q]}{n^q f^q} + n^{q/2} \frac{\left(u_i^T\bar{\mathbb{V}}_n u_i \right)^{q/2}}{n^q f^q} \\  
\leq & \ \sum_{j=1}^{i-1} \frac{\mathbb{E}[|U_j^T u_i|^q]}{n^q f^q} + \sum_{j=i+1}^n \|\xi\|_q^q \frac{\left(u_i^T \mathbb{V}_j u_i \right)^{q/2}}{n^qf^q}  + \frac{\left(u_i^T\bar{\mathbb{V}}_n u_i \right)^{q/2}}{n^{q/2} f^q} 
\end{split}
\end{align}
Taking expectations, it follows that: 
\begin{align*}
\sum_{i=1}^n\mathbb{E}\left[\bigr|\Delta_i\bigr|^q \right] \lesssim \frac{1}{{n \choose 2}} \sum_{1 \leq i < j \leq n} \frac{\mathbb{E}\left[|U_i^TU_j|^q \right]}{n^{\delta}f^q} +  \frac{1}{n} \sum_{i=1}^n \frac{\mathbb{E}\left|U_i^T\bar{\mathbb{V}}_n U_i \right|^{q/2}}{n^{\delta/2}f^q}  
\end{align*}
 Now, for $\Gamma_i$, we may use Rosenthal's inequality so that:
\begin{align*}
\sum_{i=1}^n\mathbb{E}\left[\bigr|\Gamma_i\bigr|^q \right] \leq \frac{1}{n} \sum_{i=1}^n \frac{\mathbb{E}\left|U_i^T\bar{\mathbb{V}}_n U_i \right|^{q/2}}{n^{\delta}\delta f^q} + \frac{1}{n} \sum_{i=1}^n \frac{\mathbb{E}\left[\left|Z_i^T\bar{\mathbb{V}}_n Z_i \right|^{q/2}\right]}{n^{\delta}\delta f^q} + \frac{1}{n} \sum_{i=1}^n \frac{\mathbb{E}\left(Z_i^T\bar{\mathbb{V}}_n Z_i] \right)^{q/2}}{n^{q/2} f^q} 
\end{align*}

While omitted in the original proof, in the IID case, the latter terms may be bounded by using an eigendecomposition along with properties of the Gaussian.  However, since the $Z_i$ do not have variance matrix $\mathbb{V}_n$, we instead oppose the additional condition for $L_q^Z$.   By the assumptions made in theorem, it follows that $II \rightarrow 0$.

Finally, for $I$, we have that:
\begin{align*}
 \ & \ P\left(\frac{n \bar{U}_n^T \bar{U}_n - f_1}{f} \leq t  \right) - P\left( \frac{\sum_{i\neq j} U_i^T U_j}{nf} \leq t \right) \\
 \leq & \  P(S_n^X \leq t+ h_n ) -  P(S_n^U \leq t+ h_n) + P(|R_n^X| > h_n) 
  \\  & \  +   P( t \leq V \leq t+ 2h_n) + P(|S_n^Z| > h_n) 
\end{align*}
 Using bounds from $II$ and $III$ along with anti-concentration properties, we may conclude that $I \rightarrow 0$.


%

\end{proof}

\subsection{Supporting lemmas for CLT}\label{sec:CLT-support}
In several of our lemmas, we use the following technique from \cite{jain2016streaming} that facilitates analysis for initializations from a uniform distribution on $\mathcal{S}^{d-1}$ particularly when $d$ is large. 

\begin{proposition}[Trace trick]\label{prop:trace}
Suppose that $u$ is drawn from a uniform distribution on $\mathcal{S}^{d-1}$.  Then, for any $A \in \mathbb{R}^{d \times d}$ and  $v \in \mathbb{R}^d$ satisfying $\norm{v}=1$, with probability at least $1- C\delta$, for some $C>0$ independent of $A$ and $0< \delta < 1$, 
\begin{align*}
\frac{u^TA^TAu}{(v^Tu)^2} \leq \frac{\log(1/\delta) \ \mathrm{trace}(AA^T) }{\delta^2 }    
\end{align*}
\end{proposition}
%
\begin{proof}
First, we recall the well-known fact that $u= g/\norm{g}$, where $g \sim N(0, I_d)$. Therefore, $\norm{g}$ cancels as follows:
\begin{align*}
\frac{u^TA^TAu}{(v^Tu)^2} =  \frac{g^TA^TAg}{(v^Tg)^2}
\end{align*}

Furthermore, observe that $g^TA^TAg$ may be viewed as a weighted sum of independent $\chi^2(1)$ random variables.  In particular, by an eigendecomposition argument, for $\eta_1, \ldots \eta_r \sim \chi^2(1)$ and $A = VDV^T$, 
\begin{align*}
g^T(VDV^T)(VDV^T)g &= g^TVD^2 V^Tg \\ 
&\stackrel{d}{=} g^TD^2g 
\\ &= \sum_{r=1}^p \lambda_r^2 \eta_r = \psi, \text{ say}
\end{align*} 
where above we used the fact that $V^Tg \sim N(0,I_d)$.  Now observe that $\mathbb{E}[\psi] = \sum_{r=1}^p \lambda_r^2 =  \norm{A}_F^2$ and that $\eta_r$ is sub-Exponential.  Therefore, by by Bernstein's inequality (see for example Theorem 2.8.2 of \cite{verhsynin-high-dimprob}), for some $K >0$, $C_1 > 0$, $0 < \delta < 1$, 
\begin{align*}
P\left( \psi - \mathbb{E}[\psi] > (\log(1/\delta)-1) \norm{A}_F^2 \right) &\leq     \exp\left\{- \min\left(\frac{\log^2(1/\delta) \norm{A}_{\mathcal{S}_2}^4}{4K^2\norm{A}_{\mathcal{S}_4}^4} , \frac{\log(1/\delta) \norm{A}_{\mathcal{S}_2}^2}{2K\norm{A}_{\mathcal{S}_\infty}^2} \right)  \right\}\\ 
& \leq   \exp\left\{- \min\left(\frac{\log^2(1/\delta) }{4K^2}, \frac{\log(1/\delta)}{2K} \right) \right\} \leq C_1 \delta
\end{align*}
where above $\norm{\cdot}_{\mathcal{S}_p}$ is the $p$th Schatten-Norm, defined as $(\sum_{r=1}^d s_r^p)^{1/p}$, where $s_r$ is the $r$th singular value and satisfies $\norm{\cdot}_{\mathcal{S}_q} \leq \norm{\cdot}_{\mathcal{S}_p}$ for $p \leq q$.  
Now for the denominator, since $v^Tg \sim N(0,1)$ and $(v^Tg)^2 \sim \chi^2(1)$, Proposition \ref{prop-chisq-ac} yields:
%
\begin{align*}
P( (v^Tg)^2 \leq \delta^2) \leq \frac{2\delta}{\sqrt{\pi}} 
\end{align*} 
The result follows. 
\end{proof}
The following anti-concentration result for weighted $\chi^2$ distributions is also used in several places. 
\begin{proposition}[Weighted $\chi^2$ anti-concentration, \cite{10.1093/biomet/asz020}] 
\label{prop-chisq-ac}
Let $a_1 \geq \cdots \geq a_p \geq 0$ such that $\sum_{r=1}^p a_i^2 =1$ and suppose that $\xi_1, \ldots, \xi_p \sim \chi^2(1)$. Then,
\begin{align*}
\sup_{t \in \mathbb{R}} P\left( t \leq \sum_{r=1}^p a_r \xi_r \leq t+h \right) \leq \sqrt{\frac{4h}{\pi}}
\end{align*}
\end{proposition}

We now present a concentration result for matrix products that follow immediately from  Corollary 5.4 of ~\cite{huang2020matrix}. 
\begin{lemma}[Expectation bounds for operator norms of of matrix products]
\label{lem:ward}

Let $\bc_k=\prod_{j=1}^k (I+\eta_n X_jX_j^T/n)$.
We have,
\ba{\label{eq:ward-op-err}
\E\|\bc_k-\E \bc_k\|^2 \leq  \frac{M_d e
\eta_n^2(1+2\log d)k}{n^2}(1+\eta_n\lambda_1/n)^{2k}.
}
For the expectation, we have,  if $\frac{(1 + 2 \log d)M_d\eta_n^2}{n} \leq 1$:
\ba{\label{eq:ward-op-exp}
\E\|\bc_k\|^2\leq \exp\bb{2\sqrt{2M_d\frac{k\eta_n^2}{n^2}\bb{2M_d\frac{k\eta_n^2}{n^2} \vee \log d}}}\bb{1+\eta_n\lambda_1/n}^{2k}.
}
\end{lemma}
\begin{proof}
 We invoke Corollary 5.4 in ~\cite{huang2020matrix}  with $\|\E (I+\eta_n/n X_iX_i^T)\|\leq 1+\eta_n\lambda_1/n$,  $\sigma_i^2=M_d\frac{\eta_n^2}{n^2}$, and $\nu=M_d\frac{k\eta_n^2}{n^2}$. Note that for a random matrix $M$ with Schatten norm $\|M\|_{\mathcal{S}_p}$,
$\E\|M\|\leq \sqrt{\E\|M\|_{\mathcal{S}_p}^2} $ and hence the same argument as in their proof invoking Eq~5.5 and 5.6 works.
\end{proof}



\begin{lemma}[Concentration of the norm for the CLT]
\label{lemma-norm-concentration}
For some $C >0$, and any $\epsilon >0$, $ 0 < \delta < 1$,
\begin{align*}
& \ P\bb{\left|\frac{\|B_nu_0\|}{|a_1|(1+\eta_n\lambda_1/n)^{n}}-1\right|\geq \epsilon }
\\ & \leq  \frac{ d\exp\left(-\eta_n(\lambda_1 - \lambda_2) + \frac{\eta_n^2}{n} (\lambda_1^2 + M_d) \right) + \frac{\eta_n^2}{n} M_d \exp\left( \frac{\eta_n^2}{n}  \right) }{ 4\log^{-1}(1/\delta) \delta^2 \epsilon^2\left(1+\frac{\eta_n^2 \lambda_1^2}{n} \right) } + \frac{e^2 \eta_n^2 M_d(1+ \log d) }{n\epsilon ^2} + C \delta
\end{align*}
\end{lemma}
\begin{proof}
Consider the bound:
\begin{align*}
\left|\frac{\|B_nu_0\|}{|a_1|(1+\eta_n\lambda_1/n)^{n}}-1\right|  & \leq  \left|\frac{\|B_n v_1 a_1\| - \|a_1 T_0 v_1\|}{|a_1|(1+\eta_n\lambda_1/n)^{n}} \right| +   \frac{\|B_n V_\perp (V_\perp^T u_0) \|}{|a_1|(1+\eta_n\lambda_1/n)^{n}}  
\end{align*}
We will start by bounding the second term. 

Using Proposition~\ref{prop:trace}, 
observe that, with probability at least $1 - C \delta$, 
\begin{align*}
  & \ \frac{\|(B_n V_\perp V_\perp^Tg\|^2}{|v_1^Tg|^2(1+\eta_n\lambda_1/n)^{2n}} \leq  \  \frac{\log(1/\delta) \mathrm{trace}(V_\perp B_n B_n V_\perp^T) )}{\delta^2  (1+\eta_n\lambda_1/n)^{2n}} \\ 
\end{align*}
Let $\mathcal{G}$ denote the good set for which the upper bound above holds. Markov's inequality on the good set, together with Lemma 5.2 of \cite{jain2016streaming} with $\mathcal{V}_n \leq M_d$ yields that:
\begin{align*}
& \ P\left(\frac{\|B_n V_\perp V_\perp^Tg\|}{(1+\eta_n\lambda_1/n)^{n}} \geq \epsilon/2 \ \cap \ \mathcal{G} \right) \\ & \leq \frac{ d\exp\left(-\eta_n(\lambda_1 - \lambda_2) + \frac{\eta_n^2}{n} (\lambda_1^2 + M_d) \right) + \frac{\eta_n^2}{n} M_d \exp\left( \frac{\eta_n^2}{n}  \right) }{4\delta^2 \log^{-1}(1/\delta) \ \epsilon^2 \left(1+\frac{\eta_n^2 \lambda_1^2}{n} \right) }     
\end{align*}

 


Now we will bound the first summand.
By Lemma~\ref{lem:ward} Eq~\ref{eq:ward-op-err}, we have by Markov's inequality, 
\begin{align*}
& \ P\left( \frac{\|(B_n - T_0)\|_{op}}{ (1+\eta_n\lambda_1/n)^{n}}  > \epsilon/2 \right) \leq   \ \frac{e^2 M_d(1+ \log d) }{n\epsilon^2} 
\end{align*}

Combining the two bounds and the probability of $\mathcal{G}^c$, the result follows. 






\end{proof}

   

%
 


\begin{lemma}[Negligibility of  $V_\perp$ for the CLT]
\label{lem:CLTVperp}
Let $V_\perp$ denote the matrix of eigenvectors orthogonal to $v_1$. Also let $\lambda_i$ denote the $i^{th}$ largest eigenvalue of $\Sigma$. For some $C >0$, and any $\epsilon >0$, $ 0 < \delta < 1$, 
\begin{align*}
& \  P\left(\sqrt{\frac{n}{\eta_n}} \  \frac{\norm{V_\perp V_\perp^T B_n V_\perp V_\perp^T u_0}}{|a_1|(1+\frac{\eta_n \lambda_1}{n})^n} \geq \epsilon \right) \\
\leq & \  \frac{nd \log(1/\delta) \exp\bigl\{-2\eta_n(\lambda_1-\lambda_2)+\eta_n^2(\lambda_1^2+M_d)/n\bigr\} }{\eta_n \epsilon^2 \delta^{2}} +\frac{eM_d^2(1+2\log d)\eta_n^2 \epsilon^{-2} \log(1/\delta) \delta^{-2}}{n2(\lambda_1-\lambda_2)+\eta_n^2(\lambda_1^2-\lambda_2^2-M_d)} + C \delta   
\end{align*}
\end{lemma}


\begin{proof}

We consider bounding the squared quantity.  
We have, with probability at least $1 - C\delta$, using Proposition~\ref{prop:trace}, this quantity is upper bounded by:
\begin{align*}
 & \  \frac{\norm{(V_\perp V_\perp^T B_n V_\perp V_\perp)g}^2} { (v_1^T g)^2 (1+\eta_n\lambda_1/n)^{2n}}\\
\leq   & \   \frac{ \trace\bb{(V_\perp V_\perp^T B_n V_\perp V_\perp^T)(V_\perp V_\perp^T B_n V_\perp V_\perp^T)^T}}{\delta_n (v_1^T g)^2 (1+\eta_n\lambda_1/n)^{2n}}\\
 = & \   \frac{\trace\bb{V_\perp^TB_n V_\perp V_\perp^T B_n V_\perp }}{\delta_n^{3}(1+\eta_n\lambda_1/n)^{2n}} 
\end{align*}
Now we will bound the expectation of the numerator.

We will denote $\eta=\frac{\eta_n}{n}$ for simplicity. Let $U_i=I+\eta X_iX_i^T$ and $Y_i=X_iX_i^T-\Sigma$. We have that:
\begin{align}
    \alpha_n& :=\E\matprod{ B_n\vorth\vorth^T B_n^T}{\vorth\vorth^T }\notag\\
    &=\E\matprod{B_{n-1} \vorth\vorth^T B_{n-1}^{T}}{U_n \vorth\vorth^T U_n^T}\notag\\
    &=\matprod{\E B_{n-1} \vorth\vorth^TB_{n-1}^{T}}{\E U_n \vorth\vorth^T U_n^T}\label{eq:bnrec2}
    %
\end{align}
Now we have:
\ba{\label{eq:esecond2}
\E U_n \vorth\vorth^T U_n^T&=\E \bb{I+\eta \Sigma}\vorth\vorth^T\bb{I+\eta \Sigma}^T+\eta^2\E Y_n \vorth\vorth^T Y_n^T\notag\\
&\preceq (1+2\eta\lambda_2+\lambda_2^2\eta^2)\vorth\vorth^T+\eta^2 M_d (\vorth\vorth^T+v_1v_1^T)\notag\\
&\preceq (1+2\eta\lambda_2+\lambda_2^2\eta^2+\eta^2M_d^2)\vorth\vorth^T+\eta^2 M_d v_1v_1^T
}



Finally, using Eqs~\ref{eq:bnrec2} and~\ref{eq:esecond2}, we have:
\ba{\label{eq:alphanv2}
\alpha_n
\leq \bb{1+2\eta\lambda_2+\eta^2(\lambda_2^2+M_d)}\alpha_{n-1}+\eta^2M_d\matprod{\E B_{n-1} \vorth\vorth^TB_{n-1}^{T}}{v_1v_1^T}
}
We will use the fact that,
\bas{
\langle (I+\eta\Sigma)^{n-1}V_\perp V_\perp^T (I+\eta\Sigma)^{n-1},v_1v_1^T\rangle&=0.
}

Thus, for some $N$ such that the condition $\eta_n^2 M_d \bk (1+2\log d)/n\leq 1$  holds for all rows of the triangular array with index $n > N$, we have by \bk Lemma \ref{lem:ward},
\bas{
&\matprod{\E B_{n-1} \vorth\vorth^TB_{n-1}^{T}}{v_1v_1^T}\\
&=\matprod{\E (B_{n-1} - (I+\eta\Sigma)^{n-1}) \vorth\vorth^T (B_{n-1} - (I+\eta\Sigma)^{n-1})^T}{v_1v_1^T}\\
&\leq \|\E (B_{n-1} - (I+\eta\Sigma)^{n-1}) \vorth\vorth^T (B_{n-1} - (I+\eta\Sigma)^{n-1})^T\| \\
&\leq \E \|B_{n-1} - (I+\eta\Sigma)^{n-1}\|^2 \\
&\leq  M_d e
\eta^2n(1+2\log d)(1+\eta_n\lambda_1/n)^{2(n-1)}.
}

Thus, Eq~\ref{eq:alphanv2} gives:
\begin{align*}
\alpha_n
&\leq \underbrace{\bb{1+2\eta\lambda_2+\eta^2(\lambda_2^2+M_d)}}_{c_1}\alpha_{n-1}+\eta^4 M_d^2  e
(1+2\log d)\underbrace{(n-1)(1+\eta\lambda_1)^{2(n-1)}}_{(n-1)c_2^{n-1}}\\
&=c_1\alpha_{n-1}+\eta^4 M_d^2  e
(1+2\log d) (n-1)c_2^{n-1}\\
&=c_1^n \alpha_0 +\eta^4 M_d^2  e
(1+2\log d) \sum_i c_1^{i-1}(n-i)c_2^{n-i}\\
&\leq c_2^n \bb{d(c_1/c_2)^n+\frac{eM_d^2(1+2\log d)\eta^4 n }{c_2-c_1}}\\
& \leq  (1+\eta_n\lambda_1/n)^{2n} \biggl( d (1-\lambda_1^2\eta_n^2/n)  \exp\{-2\eta_n(\lambda_1-\lambda_2)+\eta_n^2 (\lambda_1^2+M_d)/n\}  \\ 
&  \ \ \ \ \ \ \ \ \  \ \ \ \ \  \ \  +  \frac{eM_d^2(1+2\log d)\eta_n^3/n^2}{2(\lambda_1-\lambda_2)+\eta_n^2/n(\lambda_1^2-\lambda_2^2-M_d)}\biggr)
\end{align*}
\bk

where above we used the fact $e^x(1-\frac{x^2}{n}) \leq (1 + \frac{x}{n})^n \leq e^x$ for $|x| \leq n$ to bound $(c_1/c_n)^n$.


\end{proof}

\begin{lemma}[Negligibility of higher-order Hoeffding projections for the CLT]
\label{lemma-clt-hayek}
Let $ \beta_n =  \frac{\eta_n^2M_d}{n}$ and suppose that $0 \leq \beta_n \leq 1$. Then, for some $C>0$ and any $\epsilon >0$,  
\begin{align*}
P\left( \frac{\sqrt{\frac{n}{\eta_n}}  \norm{V_{\perp} V_{\perp}^T \sum_{k >1} T_k v_1}}{(1+ \frac{\eta_n\lambda_1}{n})^{n}} > \epsilon \right)  \leq  \frac{C\beta_n \eta_n }{(1-\beta_n)\epsilon^2}
\end{align*}
\end{lemma}
\begin{proof}
By Markov's inequality, it follows that: 
\begin{align*}
P\left( \frac{\frac{\sqrt{n}}{\eta_n}  \norm{V_{\perp} V_{\perp}^T \sum_{k >1} T_k v_1}}{(1+ \frac{\eta_n\lambda_1}{n})^{n}} > \epsilon \right) 
 & \leq \frac{\frac{n}{\eta_n^2}  \mathbb{E}\left[\norm{V_{\perp} V_{\perp}^T \sum_{k >1} T_k v_1}^2\right] }{\epsilon^2(1+ \frac{\eta_n\lambda_1}{n})^{2n}}   
\end{align*}
Now, by submultiplicativity of the operator norm and the fact that $\mathbb{E}[(P_{S_1}T)^T (P_{S_2})T] = 0$ for any two Hayek projections, the numerator is upper bounded by:

\begin{align*}
\left(\frac{n}{\eta_n} \right) \sum_{k=2}^n \left(\frac{\eta_n}{n}\right)^{2k} \sum_{|S| =k} \mathbb{E}\left[(v^\prime A_S u_0)^2 \right] 
 &\leq \left(\frac{n}{\eta_n} \right) \sum_{k=2}^n \sum_{|S|=k} \left(\frac{\eta_n}{n}\right)^{2k}  \mathbb{E}\left[\norm{A_S}_{op}^2\right] %
 \\  &  \leq \left(\frac{n}{\eta_n} \right) \sum_{k=2}^n \left(\frac{\eta_n}{n}\right)^{2k} \sum_{|S| = k}  \left(1 + \frac{\eta_n \lambda_{1}}{n}\right)^{2(n-k)} M_d^{k}
 \\ & \leq \eta_n M_d \left(1 + \frac{\eta_n \lambda_{1}}{n}\right)^{2n} \sum_{k=2}^n   \left(\frac{M_d\eta_n^2}{n}\right)^{k-1}
 \\ & \leq \left(1 + \frac{\eta_n \lambda_{1}}{n}\right)^{2n} \frac{\beta_n \eta_nM_d}{1-\beta_n}
\end{align*}
The result follows. 

\end{proof}

\section{Consistency of the online bootstrap}\label{sec:C}
In this section, we provide the detailed proof of Bootstrap consistency, i.e Theorem~2. 
\subsection{Proof of bootstrap consistency}\label{sec:bootproof}
\begin{proof}[Proof of Theorem~\ref{thm-oja-boot}]
Similar to the CLT, we will establish the negligibility of remainder terms and then use anti-concentration terms to argue that the contribution to the Kolmogorov distance is small. We then show that the bootstrap covariance of the main term approaches the weighted $\chi^2$ approximation in Theorem 1 with high probability.
%
Let $\widehat{v}_1$ denote the leading eigenvector estimated from Oja's algorithm and let $\widehat{V}_\perp$ denote its orthogonal complement.  Again, we have that:
 \begin{align*}
\frac{n}{\eta_n} \sin^2(v_1^*, \hat{v}_1)  = & \  \frac{n}{\eta_n}\frac{(B_n^*u_0)^T \widehat{V}_\perp\widehat{V}_\perp^T(B_n^* u_0)}{\norm{B_n^* u_0}^2} 
\\ = & \ \frac{(\sqrt{n/\eta_n}\widehat{V}_\perp\widehat{V}_\perp^TB_n^* u_0)^T (\sqrt{n/\eta_n} \widehat{V}_\perp\widehat{V}_\perp^TB_n^* u_0)}{\norm{B_n^* u_0}^2}
\end{align*}

We aim to show that the bootstrap distribution conditional on the data is close to the weighted $\chi^2$ approximation with high probability; therefore we may work the good set $\mathcal{A}_n$.  With the a slight abuse of notation, in the remainder terms below,  $O_P$ will be on the measure restricted to $\mathcal{A}_n$.

%

We first approximate the norm using Lemma \ref{lem:norm-concentration-lemma-boot}.  Analogous to the CLT, the corresponding remainder is given by:
\begin{align*}
R_1^* = \frac{\norm{B_n^* u_0}^2}{a_1^2(1 + \frac{\eta_n}{n}  \lambda_1)^{2n}} = 1 - O_{P}\left(\sqrt{d} \ \mathrm{exp} \left(-\frac{\eta_n}{2}(\lambda_1-\lambda_2) \right) +   \sqrt{\frac{\eta_n^2 M_d \log d}{n}} + \frac{\eta_n \alpha_n}{\sqrt{n}} \right)
\end{align*}
Next, we bound the contribution of the higher-order Hoeffding projections.  This step is different from the CLT in the sense that we handle both $v_1$ and $V_\perp$, using the fact that on the good set, even the Frobenius norm of certain terms are well-behaved.  By Lemma \ref{lemma-boot-hayek-remainder} we have that:
\begin{align*}
R_3^* := \sqrt{\frac{n}{\eta_n}} \cdot \frac{\widehat{V}_{\perp} \widehat{V}_{\perp}^T (B_n^*-T_1^*)u_0}{|a_1|(1+\eta_n/n \lambda_1)^n} = O_{P}\left(\exp\left(\sqrt{\frac{C M_d^2 \eta_n^2 \log d}{n}}  \right) \sqrt{\frac{\alpha_n^4 \eta_n^3}{n}}  \right) 
\end{align*}
Next, we bound the contribution of $V_\perp$ to the H\'{a}jek projection using Lemma~\ref{lem:hajekremboot}, as long as $\lambda_1M_d(\log d)^2\frac{\eta_n^2}{n}\rightarrow 0$,
\begin{align*}
R_2^* =  \ \sqrt{\frac{n}{\eta_n}} \cdot \frac{\widehat{V}_{\perp}\widehat{V}_{\perp}^T T_1^* V_\perp V_\perp^Tu_0}{|a_1|(1+\eta_n/n \lambda_1)^n}  = \  O_{P} \bb{\sqrt{\frac{\alpha_nM_d\eta_n^2}{n(\lambda_1-\lambda_2)}}}
\end{align*}
The final remainder term arises from the disparity between the orthogonal complements and the residuals of matrix products from their expectation. By Lemma~\ref{lem:boot-res}, with $\Delta_i=X_iX_i^T-X_{i-1}X_{i-1}^T$,
\begin{align*}
R_4^* = \sqrt{\frac{n}{\eta_n}}\left\|\frac{\vp\vp^T T_1^* v_1(v_1^T u_0)}{|v_1^T u_0|(1+\eta_n\lambda_1/n)^n} - \frac{\eta_n}{n}\sum_i W_i D_{i-1}\Delta_i v_1\right\| = O_P\bb{\sqrt{\frac{M_d\alpha_n\eta_n^3\log d}{n}}}
\end{align*}

Now, define:
\begin{align*}
S_n^* = \sqrt{\frac{n}{\eta_n}} \  \frac{V_\perp V_\perp^T T_1^* v_1}{(1+\frac{\eta_n \lambda_1}{n})^{n}}   
\end{align*}

Consider the following bound:
 \begin{align}
 \begin{split}
& \  P\left\{ \sup_{t \in \mathbb{R}} \left| P^*(n/\eta_n \sin^2(v_1^*, \widehat{v}_1) \leq t ) - P(Z^TZ \leq t) \right| > \epsilon \right\}
\\ = & \ P_{\mathcal{A}_n}\left\{ \sup_{t \in \mathbb{R}} \left| P^*\left(R_1^* \cdot \frac{(S_n^* + R_2^* + R_3^* + R_4^* )^T(S_n^* + R_2^* + R_3^* + R_4^*)}{ f }   \leq t \right) - P\left( \frac{Z^TZ}{f}  \leq t \right)  \right| > \epsilon  \right\}
\\ + & P_{\mathcal{A}_n^c}\left\{ \sup_{t \in \mathbb{R}} \left| P^*\left(R_1^* \cdot \frac{(S_n^* + R_2^* + R_3^* + R_4^* )^T(S_n^* + R_2^* + R_3^* + R_4^*)}{ f }   \leq t \right) - P\left( \frac{Z^TZ}{f}  \leq t \right)  \right| > \epsilon  \right\}
\end{split}
\end{align}
The second term is easily upper-bounded by $P(\mathcal{A}_n^c) \rightarrow 0 $, so we will bound the first term. To lower bound the Kolmogorov metric, we may follow the same reasoning used in Eqs \ref{eq-clt-lower-bound-1}, \ref{eq-clt-lower-bound-2}, \ref{eq-clt-lower-bound-3}, to deduce, on the good set $\mathcal{A}_n$, we have the lower bound:
 \begin{align*}
& \ P^*\left( \frac{S_n^{*T}S_n^*}{f} \leq  \frac{t}{1+\epsilon_n} - \epsilon_n  \right) -  P\left( \frac{Z^TZ}{f} \leq  \frac{t}{1+\epsilon_n} - \epsilon_n \right) \\ 
& \ +   P\left( \frac{Z^TZ}{f} \leq  \frac{t}{1+\epsilon_n}  - \epsilon_n \right)-  P\left( \frac{Z^TZ}{f} \leq t \right)  - P^*\left(G_{boot} \cap \mathcal{A}_n\right) = I^* + II^* + III^*
\end{align*}
where $G_{boot}$ satisfies $P(G_{boot}^c) = 0$ and for some $\epsilon_n \rightarrow 0$, is defined as:
\begin{align*}
G_{boot} = \{ |R_1^* - 1| \leq \epsilon_n, |R_2^*|,|R_3^*|, |R_4^*|  \leq \epsilon_n \ \}    
\end{align*}
For $I$, we may use Lemma~ \ref{lem:covdiff}, which establishes that bootstrap version of the covariance matrix, which consists of empirical covariances, is close to the Gaussian approximation, implying, by our Gaussian comparison result Lemma \ref{lem:gauss-compare}:
\begin{align*}
I^* = O_P\left( \left(\frac{\mathbb{E}[\norm{X_i X_i^T - \Sigma}^4] }{n (\lambda_1 - \lambda_2) \norm{M}_F^2}\right)^{1/4} \right)      
\end{align*}
For $II^*$, we may use the anti-concentration result and $P^*(G_{boot} \cap \mathcal{A}_n) \xrightarrow{P} 0$ by Markov's inequality since the Lemmas hold for the unconditional measure, which is the expectation of the bootstrap measure.  We may use analogous reasoning to the CLT for the upper bound and the result follows.  

\end{proof}
\subsection{Proof of Lemma~\ref{lem:covdiff}}\label{sec:lemcovdiff}
\begin{proof}
Let $Y_i:=X_iX_i^T-\Sigma$. Also let $M_i=\E[D_{i-1}Y_i v_1 v_1^T Y_i D_{i-1}]$.
First note that
\ba{\label{eq:varDiffDecomp}
\E^* ZZ^T -\bigvar&= \frac{\eta_n}{2n}\sum_i D_{i-1} (Y_i-Y_{i-1}) v_1 v_1^T (Y_i-Y_{i-1}) D_{i-1}\notag\\
&=\frac{\eta_n}{n}\sum_i\frac{\bb{D_{i-1} Y_i v_1 v_1^T Y_i D_{i-1}-M_i}+\bb{D_{i-1} Y_{i-1} v_1 v_1^T Y_{i-1} D_{i-1}-M_i}}{2}\notag\\
&+\frac{\eta_n}{n}\sum_i \bb{D_{i-1} Y_i v_1 v_1^T Y_{i-1} D_{i-1}+D_{i-1} Y_{i-1} v_1 v_1^T Y_{i} D_{i-1}}
}

We first compute trace.
\bas{
\trace(\E^* ZZ^T -\bigvar)&=\frac{\eta_n}{2n}\sum_i \underbrace{\bb{\|D_{i-1}Y_i v_1\|^2-\E \|D_{i-1}Y_i v_1\|^2}}_{U_{1,i}}\\
&+\frac{\eta_n}{2n}\sum_i \underbrace{\bb{\|D_{i-1}Y_{i-1} v_1\|^2-\E \|D_{i-1}Y_i\|^2}}_{U_{2,i}}\\
&+\frac{\eta_n}{n}\sum_i \underbrace{v_1 Y_i D_{2(i-1)}Y_{i-1} v_1}_{U_{3,i}}
}

The last step is true because $D_{i-1}^2=D_{2(i-1)}$.
We start with the first term.
\bas{
\E U_{i,1}^2&\leq \E\|D_{i-1}Y_i v_1\|^4\leq \E \|Y_i\|^4\left(\frac{1+\eta_n\lambda_2/n}{1+\eta_n\lambda_1/n}\right)^{4(i-1)}\\
\var(\sum_i U_{1,i})&\leq \E \|Y_1\|^4 \sum_i \left(\frac{1+\eta_n\lambda_2/n}{1+\eta_n\lambda_1/n}\right)^{4(i-1)}\\
&\leq \frac{n}{\eta_n(\lambda_1-\lambda_2)}\\
&\leq \frac{n}{\eta_n}\E \|Y_1\|^4\min\bb{\frac{1}{\lambda_1-\lambda_2},\eta_n}
}

Finally, 
\bas{
\E [U_{3,i}^2]\leq \E\bb{v_1 Y_i D_{2(i-1)}Y_{i-1} v_1}^2\leq M_d^2 \left(\frac{1+\eta_n\lambda_2/n}{1+\eta_n\lambda_1/n}\right)^{2(i-1)}
}
Thus, we have
\bas{
\frac{\eta_n}{2n}\sum_i
U_{1,i}=O_P\bb{\sqrt{\frac{ \E\|Y_1\|^4}{n(\lambda_1-\lambda_2)}}}
}
We also have,
\bas{
\frac{\eta_n}{2n}\sum_i
U_{2,i} = O_P\bb{\sqrt{\frac{ \E\|Y_1\|^4}{n(\lambda_1-\lambda_2)}}}
}

Also note that while $U_{3,i}$ terms are 1-dependent, they are in fact uncorrelated. 
Thus, we have:
\bas{
\var(\sum_i U_{3,i})\leq \frac{M_d^2 n}{(\lambda_1-\lambda_2)},
}
and,
\bas{
\trace(\E^*ZZ^T-\bigvar)=O_P\bb{\sqrt{\frac{\E\|X_iX_i^T-\Sigma\|^4}{n(\lambda_1-\lambda_2)}}}
}

Now we bound the Frobenius norm. We will start with the expected Frobenius norm of the first term of Eq~\ref{eq:varDiffDecomp}.
\bas{
A_1 &=\E\left\|\frac{1}{2n}\sum_{i=1}^nD_{i-1} Y_i v_1 v_1^T Y_i D_{i-1}-M_i\right\|_F^2\\
&\leq  \frac{1}{4 n^2}\sum_i \E\|D_{i-1}Y_i v_1v_1^T Y_iD_{i-1}\|_F^2\leq \frac{ E\|Y_1\|^4}{4n\eta_n(\lambda_1-\lambda_2)}
}
Similarly,
\bas{
A_2&=\E\left\|\frac{1}{n}\sum_i D_{i-1}Y_iv_1v_1^TY_{i-1}D_{i-1}\right\|_F^2\\
&\leq \frac{1}{n\eta_n(\lambda_1-\lambda_2)} M_d^2
}

Thus
,
\bas{
\left\| \E^* ZZ^T-\bigvar \right\|_F= O_P\bb{\sqrt{\frac{\E\|X_1X_1^T-\Sigma\|^4}{n(\lambda_1-\lambda_2)}}}
}

\end{proof}
\subsection{The Gaussian comparison lemma}\label{sec:gausscompare}
We use the following lemma to compare to Gaussian random variables with mean $0$ and different covariance matrices.  Our result is related to \cite{gotzeCOV}, but our lemma below is easier to implement and does not require that $3\|\Sigma \|^2 \leq \|\Sigma \|_{F}^2$.
\begin{lemma}\label{lem:gauss-compare}[Comparison lemma for inner products of Gaussian random variables]

Suppose that  $Z \sim N(0, \mathbb{V} )$, $\check{Z} \sim N(0, \check{\mathbb{V}})$, $f = \| \mathbb{V}\|_F$, and $\Delta_1 = \mathrm{tr}(\mathbb{V} - \check{\mathbb{V}})$.  Then, there exists some constant $K >0$ such that for any $\epsilon >0$, 
\begin{align*}
\sup_{t \in \mathbb{R}} \left| P( Z^T Z \leq  t) - P( \check{Z}^T \check{Z} \leq  t ) \right| \lesssim  \sqrt{\frac{|\Delta_1| + \epsilon}{f}} + \mathrm{exp}\left\{ - \left(\frac{\epsilon^2}{K^2\| \mathbb{V} -  \check{\mathbb{V}} \|_{F}^2} \bigwedge \frac{\epsilon}{ K\| \mathbb{V} -  \check{\mathbb{V}} \|} \right) \right\}
\end{align*}
\end{lemma}

\begin{proof}
Let $\lambda_1 \geq \ldots \geq \lambda_p$ denote the eigenvalues $\mathbb{V}$, $\gamma \geq \ldots \geq \gamma_p$ denote the eigenvalues of $\check{\mathbb{V}}$.   Recall that $Z^T Z \sim \sum_{r=1}^p \lambda_r \eta_r$, $\check{Z}^T \check{Z} \sim \sum_{r=1}^p \gamma_r \eta_r$, where $\eta_r \sim \chi^2(1)$.  We upper bound the difference between the CDFs uniformly in $t$; the argument for the lower bound is analogous.  For $\epsilon >0$, let $t' = t - |\Delta_1| -\epsilon$.  It follows that: 
\begin{align*}
& \ P( Z^T Z \leq  t) - P( \check{Z}^T \check{Z} \leq  t ) \\ 
 = & \ P\left(  \frac{ \sum_{r=1}^p \lambda_r \eta_r}{f}  \leq \frac{t}{f}  \right) - P\left( \frac{ \sum_{r=1}^p \lambda_r \eta_r + \sum_{r=1}^p (\gamma_r - \lambda_r) \eta_r - \Delta_1  }{f}  \leq \frac{t - \Delta_1}{f} \right) \\ 
\leq & \ P\left( \frac{t^\prime}{f} \leq \frac{ \sum_{r=1}^p \lambda_r \eta_r}{f}  \leq \frac{t^\prime + |\Delta_1| + \epsilon}{f}  \right) + P\left( \left|\sum_{r=1}^p (\gamma_r - \lambda_r) \eta_r - \Delta_1\right| > \epsilon \right)
\end{align*}
Observe that $\sum_{r=1}^p (\lambda_r - \gamma_r)^2 \leq \| \mathbb{V} -  \check{\mathbb{V}} \|_{F}^2$ by Hoffman-Wielandt inequality and $\max_r | \lambda_r - \gamma_r | \leq \|  \mathbb{V} -  \check{\mathbb{V}} \|_{op}$ by Weyl's inequality.  Since $\chi^2(1)$ is sub-Exponential, by Bernstein's inequality (see for example Theorem 2.8.2 of \cite{verhsynin-high-dimprob}:
%
\begin{align*}
P\left( \left|\sum_{r=1}^p (\gamma_r - \lambda_r) \eta_r - \Delta_1\right| > \epsilon \right) &\leq \mathrm{exp}\left\{ - \left(\frac{\epsilon^2}{K^2\| \mathbb{V} -  \check{\mathbb{V}} \|_{F}^2} \bigwedge \frac{\epsilon}{ K\| \mathbb{V} -  \check{\mathbb{V}} \|} \right)  \right\}
\end{align*}
The upper bound follows from an application of Proposition \ref{prop-chisq-ac}.  The lower bound is analogous.  

\end{proof}

\subsection{Other supporting lemmas for bootstrap consistency}
\label{sec:bootsupplem}
Before presenting our supporting lemmas, we present some events we will use frequently.
Let $\ac_{\sin}$ denote the set 
\begin{align}\label{eq:sinset}
    \ac_{\sin}:=\bbs{1-(v_1^T\vo)^2\leq \frac{\sinsqerr}{\sinsqprob}}.
\end{align} Using Corollary~\ref{cor:sinsq}, and the remark thereafter, we have:
\ba{\label{eq:sinset_prob}
P\bb{1-(v_1^T\vo)^2\geq \frac{\sinsqerr}{\sinsqprob}} \leq \sinsqprob,
}
where, under the assumptions of Theorem~\ref{thm-oja-clt},
\ba{\label{eq:sinsqerr}
\sinsqerr= C_3\frac{M_d\eta_n}{n(\lambda_1-\lambda_2)}
}

Also let,
\begin{align}\label{eq:highprobset}
    \mathcal{A}_
n =  \left\{\max_{1 \leq i \leq n } \|X_i\|_2^2 \leq \alpha_n \right\}
\end{align}
\begin{lemma}\label{lem:boot-res}[Bounding the norm of bootstrap residual from $T_1^*$]
Let $\Delta_i=X_i X_i^T-X_{i-1}X_{i-1}^T$ and assume the conditions in Theorem~\ref{thm-oja-clt}. 
Let $D_i=\vorth\Lambda_\perp^i\vorth^T$, where $\Lambda_\perp(k,\ell)=\frac{1+\eta_n\lambda_{k+1}/n}{1+\eta_n\lambda_1/n}1(k=\ell)$. For any $\epsilon,\delta>0$, we have:
\bas{
&P\left(\bbs{\sqrt{\frac{n}{\eta_n}}\left\|\frac{\vp\vp^T T_1^* v_1(v_1^T u_0)}{|v_1^T u_0|(1+\eta_n\lambda_1/n)^n} - \frac{\eta_n}{n}\sum_i W_i D_{i-1}\Delta_i v_1\right\| \geq \epsilon}\cap \ac_n\right)\\
&\leq C''\frac{\alpha_n M_d\eta_n^3\log d}{n\epsilon^2\delta}+\delta
}
\end{lemma}

\begin{proof}

\bas{
&\frac{\vp\vp^T T_1^*v_1 (v_1^Tu_0
)}{|v_1^Tu_0|(1+\eta_n\lambda_1/n)^{n-1}} \\
&= \sn(v_1^Tu_0)\frac{\eta_n}{n} \sum_i W_i D_{i-1}\Delta_iv_1\\ &+\sn(v_1^Tu_0)\frac{\eta_n}{n}\underbrace{(\vp\vp^T-\vorth\vorth^T)\sum_i W_i  D_{i-1}\Delta_i v_1}_{r_1}\\
&+ \sn(v_1^Tu_0)\frac{\eta_n}{n}\left(\underbrace{\sum_i W_i\bb{ \frac{R_{1,i-1}\Delta_i v_1}{\bb{1+\lambda_1\eta_n/n}^{i}}}}_{r_2}\right.\\
&\qquad\qquad\qquad\qquad\left.+\underbrace{\frac{W_i(I+\eta_n\lambda_1/n)^{i-1}\Delta_iR_{i,n} v_1}{\bb{1+\lambda_1\eta_n/n}^{n-1}}}_{r_3} + \underbrace{W_i\frac{R_{1,i-1}\Delta_i R_{i,n}v_1}{\bb{1+\lambda_1\eta_n/n}^{n-1}}}_{r_4} \right)
}

Define
\begin{align}
\mathcal{B}_{1,j} &= \prod_{i=1}^{j} \left(I + \frac{\eta_n}{n} X_i X_i^T   \right) \qquad \mathcal{B}_{j,n} = \prod_{i=j}^{n} \left(I + \frac{\eta_n}{n} X_i X_i^T \right) \label{eq:b1j}
\end{align} 
 
When $j=0$, $\mathcal{B}_{1,j}=I$.

Using Lemma~\ref{lem:ward}
we have:
\ba{\label{eq:wardbounds}
R_{1,i}&=\mathcal{B}_{1,i}-(I+\eta_n\Sigma/n)^i\qquad R_{i,n}=\mathcal{
B}_{i,n}-(I+\eta_n\Sigma/n)^{n-i}\\
\E\|R_{1,i-1}\|^2 &\leq   eM_d(1+2\log d)\frac{\eta_n^2}{n^2} i  \left(1+\eta_n\lambda_1/n\right)^{2i}\\
\E\|R_{i,n}\|^2 &\leq   eM_d(1+2\log d)\frac{\eta_n^2}{n^2} (n-i)  \left(1+\eta_n\lambda_1/n\right)^{2(n-i)}
}

We have, on the good set $\ac_{\sin}$,
\bas{
\E^*\|r_1\|^2 \leq n\alpha_n\frac{\sinsqerr}{\sinsqprob} 
}
We also have:
\bas{
\E\left[\E^*\|r_2\|^2 1(\ac_n)\right]&\leq  \frac{\eta_n^2}{
n^2}\alpha_n\sum_i \E[\|R_{1,i}^2 1(\ac_n)\|^2]\\
&\leq eM_d(1+2\log d)\alpha_n\eta_n^2
}
The last step is true because $\E[\|R_{1,i}^2 1(\ac_n)\|^2]\leq \E[\|R_{1,i}^2\|^2]$.
Similarly 
\bas{
\E\left[\E^*\|r_3\|^2 1(\ac_n)\right]\leq eM_d(1+2\log d)\alpha_n\eta_n^2
}
and 

\bas{
\E\left[\E^*\|r_4\|^2 1(\ac_n)\right]\leq e^2M_d^2(1+2\log d)^2\alpha_n\eta_n^4/n
}

Finally, we have:
\bas{
&P\left(\bbs{\frac{\eta_n}{n}\|\sum_j r_j\|^2\geq \epsilon}\cap\ac_n\right)\leq P\left(\bbs{4\frac{\eta_n}{n}\sum_j \|r_j\|^2\geq \epsilon}\cap \ac_n\right)\\
&\leq \sum_i P\bb{\bbs{\|r_i\|^2\geq \frac{n\epsilon}{16\eta_n}}\cap \ac_n\cap \ac_{\sin}}+\sinsqprob\\
&\leq C\sum_i \E\left[\E^*\|r_i\|^21(\ac_n\cap \ac_{\sin})\right]\times\frac{\eta_n}{n\epsilon}+\sinsqprob\\
&\stackrel{(i)}{\leq} C'\bb{n\alpha_n\frac{\sinsqerr}{\sinsqprob}+M_d\log d\alpha_n\eta_n^2}\times\frac{\eta_n}{n\epsilon}+\sinsqprob\\
&\stackrel{(ii)}{\leq} C''\frac{\alpha_n M_d\eta_n^3\log d}{n\epsilon\sinsqprob}+\sinsqprob
}
Step (i) is true because $M_d\log d\eta_n^2/n\rightarrow 0$.
Step (ii) is true because of Eq~\ref{eq:sinsqerr}. Now setting $\sinsqprob$ to any $\delta>0$ gives the result.

\end{proof}

\begin{lemma}[Concentration of the norm for the bootstrap]
\label{lem:norm-concentration-lemma-boot}
Let $u_0$ be uniformly distributed on $\mathbb{S}^{d-1}$ and $a_1 = u_0^\prime v_1$  and $V_\perp V_\perp^T$ is orthogonal complement.
Suppose that $(\alpha_n)_{n \geq 1}$ satisfies $0 \leq \frac{(\eta_n \alpha_n)^2}{n} \leq 1$.
Then, for any $\epsilon >0, 0< \delta <1$ and some $C > 0$, 
\begin{align*}
& \ P\bb{\left\{\left|\frac{\|B_n^*u_0\|}{|a_1|(1+\eta_n\lambda_1/n)^n}-1\right|\geq \epsilon_n \right\} \  \bigcap \ \mathcal{A}_{n} } \\ 
\leq &  \ 
\frac{ d\exp\left(-\eta_n(\lambda_1 - \lambda_2) + \frac{\eta_n^2}{n} (\lambda_1^2 + M_d) \right) + \frac{\eta_n^2}{n} M_d \exp\left( \frac{\eta_n^2}{n}  \right) }{8\log^{-1}(1/\delta) \delta^2 \  \epsilon^2\left(1+\frac{\eta_n^2 \lambda_1^2}{n} \right) } \\ 
& \ + \frac{e^2 \eta_n^2 M_d(1+ \log d) }{2n\epsilon^2} + \frac{C \beta_n^* \log(1/\delta) }{(1- \beta_n^*) \delta^{2} \epsilon^2 } + C \delta,
\end{align*}
where $\beta_n^*$ is defined in (\ref{eq:b-star}) and $\ac_n$ is defined in Eq~\ref{eq:highprobset}.

\end{lemma}
\begin{proof}
First note that we may reduce the problem as follows:

 \begin{align*}
     & \ P \bb{\bbs{\left|\frac{\|B_n^*u_0\|}{|a_1|(1+\eta_n\lambda_1/n)^n}-1 \right| \geq \epsilon}\cap\ac_n }\\
     \leq & \ P\bb{\bbs{ \frac{\norm{B_n^*u_0 -B_nu_0}_2} {|a_1|(1+\eta_n\lambda_1/n)^n } + \left|\frac{\norm{B_nu_0}_2}{|a_1|(1+\eta_n\lambda_1/n)^n} -1 \right| > \epsilon}\cap \ac_n}   
     \\ \leq & \  \mathbb{E} \left[P^*\bb{ \frac{\norm{B_n^*u_0 -B_nu_0}_2} {|a_1|(1+\eta_n\lambda_1/n)^n } > \frac{\epsilon}{2}} 1(\ac_n)\right] + P\bb{ \left|\frac{\norm{B_nu_0}_2}{|a_1|(1+\eta_n\lambda_1/n)^n} -1 \right| > \frac{\epsilon}{2}} 
    %
     %
 \end{align*}
 The bound for the second term follows from Lemma \ref{lemma-norm-concentration}. For the first term, we invoke Proposition~\ref{prop:trace} so that, with probability at least $1- C \delta$,
\begin{align*}
 & \  \frac{\norm{(B_n^* -B_n)g}_2^2} { (v_1^T g)^2 (1+\eta_n\lambda_1/n)^{2n}}
 \leq  \   \frac{ \log(1/\delta) \ \|B_n^*-B_n\|_F^2}{\delta^{2}(1+\eta_n\lambda_1/n)^{2n}} 
 \end{align*}
 
 
 Now, using the fact that for any two Hayek projections $P_S^*$ and $P_T^*$, $\mathbb{E}[(P_S^*)^TP_T^*]$ = 0 and for any two matrices $\|AB\|_F \leq \|A\|_{F} \|B\|_{op}$, we have on the high probability set:
 %
 \begin{align*}
 & \ \ \E^*\|B_n^*-B_n\|_F^2 \\
 \leq & \  \sum_{k=1}^n \sum_{|S| = k} \left(\frac{\eta_n}{n}\right)^{2k} \prod_{i=1}^k  \norm{X_{S[i]}X_{S[i]}^\prime - X_{S[i]-1}X_{S[i]-1}^\prime}_{F}^2 \ \prod_{j=1}^{k+1} \norm{\mathcal{B}_{j,n}^{(S)}}_{op}^2,
 \end{align*}
 where $\mathcal{B}_{j,n}^{(S)}$ denotes a contiguous block of  $I + \frac{\eta_n}{n} X_iX_i^T$ only. More precisely, suppose $|S| = k$.  Let $S[i]$ denote the $i$th element of $S$, with $S[0]= 0$ and $S[k+1] =n-1$.  For each $1 \leq j \leq k+1$ if $S[j] > S[j-1]+1$ define $\mathcal{B}_{j,n}$ as:
\begin{align}\label{eq:bjn}
\mathcal{B}_{j,n}^{(S)} &= \prod_{i=S[j-1]+1}^{S[j]-1} \left(I + \frac{\eta_n}{n} X_i X_i^T \right) 
\end{align}   
otherwise, set $\mathcal{B}_{j,n}^{(S)} =I$.
 Now, we may repeat arguments in Lemma \ref{lemma-boot-hayek-remainder} equations (\ref{eq:sub-mult}), (\ref{eq:hayek-boot-summand}), and (\ref{eq:hayek-boot-upper-bound}) to conclude that, for some $C > 0$,

 \begin{align*}
 P\left( \frac{\log(1/\delta) \ \|B_n^*-B_n\|_F^2}{ \delta^2 (1+\eta_n\lambda_1/n)^{2n}} > \epsilon \  \bigcap \  \mathcal{A}_n \right) \leq  \frac{C \log(1/\delta) \beta_n^*}{(1- \beta_n^*) \delta^{2} \epsilon^2 }  
 \end{align*}
The result follows.  
\end{proof}

\begin{lemma}[Negligibility of higher-order Hoeffding projections for the bootstrap]
\label{lemma-boot-hayek-remainder}
Suppose $\alpha_n$ is defined so that $0 \leq \beta_n^* \leq 1$, where
\begin{align}
\label{eq:b-star}
\beta_n^* = \exp\left(\sqrt{\frac{C M_d^2 \eta_n^2 \log d}{n}} \right) \frac{4 \eta_n^2   \alpha_n^2}{n} \ 
\end{align}

Then for any $\epsilon >0 , 0 < \delta < 1$ and for some $C>0$,  
\begin{align*}
&P\left(\left\{ \frac{\sqrt{\frac{n}{\eta_n}}  \norm{\hat{V}_{\perp} \hat{V}_{\perp}^T \sum_{k >1} T_k^*u_0}}{|a_1|(1+ \frac{\eta_n\lambda_1}{n})^{n}} > \epsilon_n \right\}   \  \bigcap \ \mathcal{A}_n \right) \\
&\leq  \exp\left(\sqrt{\frac{C M_d^2 \eta_n^2 \log d}{n}} \right)    \frac{\log(1/\delta)}{\delta^2} \frac{\alpha_n^2 \beta_n^* \eta_n}{(1-\beta_n^*) \epsilon^2 } + C \delta    ,
\end{align*}
where $\ac_n$ is defined in Eq~\ref{eq:highprobset}.
\end{lemma}
\begin{proof}


Using the trace trick in Proposition~\ref{prop:trace} again, we have that, with probability at least $1-C\delta$ for some $C >0$,
\begin{align*}
& \frac{\frac{n}{\eta_n}  \norm{\hat{V}_{\perp} \hat{V}_{\perp}^T \sum_{k >1} T_k^*g}^2}{(v_1^Tg)^2(1+ \frac{\eta_n\lambda_1}{n})^{2n}} 
\leq \frac{\frac{n}{\eta_n} \log(1/\delta) \norm{\sum_{k >1} T_k}_F^2 }{\delta^2 (1+ \frac{\eta_n\lambda_1}{n})^{2n} }
\end{align*}

The Hoeffding decomposition (Proposition \ref{prop:boothoeff}), together with the fact that $\norm{AB}_F \leq \norm{A}_F \norm{B}_{op}$ implies: 
\begin{align}
\label{eq:sub-mult}
\begin{split}
& \ \mathbb{E}^*\left[\norm{\sum_{k >1} T_k^* }_F^2 \right] = \mathbb{E}^*\left[  \sum_{k >1} \norm{T_k^*}_F^2 \right]
\\ \leq & \ \sum_{k=2}^n \sum_{|S| = k} \left(\frac{\eta_n}{n}\right)^{2k} \prod_{i=1}^k  \norm{X_{S[i]}X_{S[i]}^T - X_{S[i]-1}X_{S[i]-1}^T}_{F}^2 \ \prod_{j=1}^{k+1} \norm{\mathcal{B}_{j,n}^{(S)}}_{op}^2
\end{split}
\end{align}


Now, that expectation corresponding to a given summand is given by: 
%
\begin{align}
\label{eq:hayek-boot-summand}
\begin{split}
\ & \  \int_{\mathcal{A}_n } \norm{X_{S[i]}X_{S[i]}^T - X_{S[i]-1}X_{S[i]-1}^T}_{F}^2 \ \prod_{j=1}^{k+1} \norm{\mathcal{B}_{j,n
}^{(S)}}^2 dP \\  
 \leq & \int_{\mathcal{A}_n } \prod_{i=1}^k 4 \alpha_n^2  \ \prod_{j=1}^{k+1} \norm{\mathcal{B}_{j,n}^{(S)}}^2 dP \\
  \leq  & \ \left( 4 \alpha_n^2 \right)^k \ \prod_{j=1}^{k+1} \mathbb{E}\left[\norm{\mathcal{B}_{j,n}^{(S)}}^2\right] 
  \end{split}
\end{align}
where $\mathcal{B}_{j,n}^{(S)}$ is defined in Eq~\ref{eq:bjn}.

To bound $\mathbb{E}\left[\norm{\mathcal{B}_{j,n}^{(S)}}^2\right]$, we invoke Lemma~\ref{lem:ward} Eq~\ref{eq:ward-op-exp}.
For some $C> 0$ uniformly in $S$: 
\begin{align*}
\prod_{j=1}^{k+1} \mathbb{E}\left[\norm{\mathcal{B}_{j,n}^{(S)}}^2\right] \leq \exp\left(\sqrt{\frac{C M_d^2 \eta_n^2 \log d}{n}} \right)^{k+1} \left(1 + \frac{\eta_n \lambda_1}{n}\right)^{2(n-k)}
\end{align*}
Therefore, by Markov's inequality,
\begin{align}
\begin{split}
\label{eq:hayek-boot-upper-bound}
& \ P\left(\left\{ \frac{\sqrt{\frac{n}{\eta_n}}  \norm{\hat{V}_{\perp} \hat{V}_{\perp}^T \sum_{k >1} T_k^*u_0}}{(1+ \frac{\eta_n\lambda_1}{n})^{n}} > \epsilon_n \right\}   \  \bigcap \ \mathcal{A}_n \right) \\ 
\leq & \ \frac{n}{\delta^{3}\epsilon_n^2\eta_n} \exp\left(\sqrt{\frac{C M_d^2 \eta_n^2 \log d}{n}} \right) \sum_{k=2}^n  \left(\frac{4 \eta_n^2   \alpha_n^2}{n} \ \exp\left(\sqrt{\frac{C M_d^2 \eta_n^2 \log d}{n}} \right) \right)^{k}
\\ \leq
 & \ \alpha_n^2 \eta_n \delta_n^{-3}  \epsilon_n^{-2} \exp\left(\sqrt{\frac{C M_d^2 \eta_n^2 \log d}{n}} \right)  \sum_{k=1}^n  \left(\frac{4 \eta_n^2   \alpha_n^2}{n} \ \exp\left(\sqrt{\frac{C M_d^2 \eta_n^2 \log d}{n}} \right) \right)^{k} \\ 
  \leq & \ \exp\left(\sqrt{\frac{C M_d^2 \eta_n^2 \log d}{n}} \right)    \frac{\alpha_n^2 \beta_n^* \eta_n}{(1-\beta_n^*) \epsilon_n^2\delta_n^{3}}
\end{split}
\end{align}
where the last line follows from a geometric series argument.
\end{proof}

\begin{lemma}\label{lem:geometric}
$$\sum_{i=0}^n \bb{1-\frac{\eta_n/n(\lambda_1-\lambda_2)}{1+\eta_n\lambda_1/n}}^{2i}\leq \frac{n}{\eta_n}\min\bb{\eta_n,\frac{1}{\lambda_1-\lambda_2}}$$
\end{lemma}
\begin{proof}
This follows from the definition of a geometric series.
\end{proof}

\begin{lemma}[Bounding the leading Hoeffding projection for the bootstrap on $V_\perp$]\label{lem:hajekremboot}
 Let $\lambda_1M_d(\log d)^2\frac{\eta_n^2}{n}\rightarrow 0$, and $\condition\rightarrow 0$. For any $\epsilon,\delta>0$, and $C_1,C_2\geq 0$, we have:
\bas{
P\bb{\bbs{\sqrt{\frac{n}{\eta_n}}\frac{\|\vp\vp^T T_1^* \vorth\vorth^T u_0\|}{(1+\eta_n\lambda_1/n)^{n}|v_1^T u_0|}\geq \epsilon}\cap \ac_n}\leq \frac{C_1\alpha_nM_d\eta_n^2\log(1/\delta)}{n(\lambda_1-\lambda_2)\delta^3}\frac{1}{\epsilon^2}+C_2\delta 
}
\bk
\end{lemma}
\begin{proof}
Using Proposition~\ref{prop:trace}, with probability at least $1-\delta$, 
\ba{\label{eq:normalized_higher}
\frac{\|\vp\vp^T T_1^* \vorth\vorth^T u_0\|^2}{(1+\eta_n\lambda_1/n)^{2n}\|v_1^T u_0\|^2}&\leq \frac{\log(1/\delta) \norm{\vp\vp^T T_1^* \vorth\vorth^T}_F^2 }{\delta^2(1+\eta_n\lambda_1/n)^{2n} }\notag\\ &=\frac{\log(1/\delta)\trace(\vp\vp^TT_1^*\vorth\vorth^T T_1^* \vp\vp^T)}{\delta^2 (1+\eta_n\lambda_1/n)^{2n}}\notag\\
&=\frac{\log(1/\delta)\norm{\vp\vp^TT_1^*\vorth}_F^2}{\delta^2 (1+\eta_n\lambda_1/n)^{2n}}
}


First note that, 
\bas{
\|V_\perp V_\perp^T-\vp\vp^T\|_F^2=\|v_1v_1^T-\vo\vo^T\|_F^2=2(1-(v_1^T\vo)^2)
}
Thus, we have
 \ba{\label{eq:T1v1decomp}
 &\E^*\|\vp\vp^T T_1^* V_\perp \|_F^2\notag\\
 &=\frac{\eta_n^2}{n^2}\sum_i \|\vp\vp^T \mathcal{B}_{1,i-1}(X_iX_i^T-X_{i-1}X_{i-1}^T)\mathcal{B}_{i+1,n}  V_\perp \|_F^2\notag\\
 &\leq 4\frac{\eta_n^2}{n^2}\sum_i\sum_{j=1}^6 \|r_{j,i}\|_F^2,
 }
 where $B_{1,i}$ are defined in Eq~\ref{eq:wardbounds}, and the residual vectors $r_{k,i}$ are defined as follows.
Recall the definition of $R_{1,i}$ and $R_{i,n}$ from Eq~\ref{eq:wardbounds}.
Now define the following vectors which contribute to the remainder.
\bas{
r_{1,i}&=\vp\vp^T R_{1,i-1}(Y_i-Y_{i-1})R_{i+1,n} V_\perp \\
r_{2,i}&=\vp\vp^T R_{1,i-1}(Y_i-Y_{i-1})(I+\eta_n/n\Sigma)^{n-i} V_\perp\\
r_{3,i}&=\vorth\vorth^T (I+\eta_n/n\Sigma)^{n-i}(Y_i-Y_{i-1})R_{i+1,n} V_\perp \\
r_{4,i}&=\vorth\vorth^T (I+\eta_n/n\Sigma)^{n-i}(Y_i-Y_{i-1})(I+\eta_n/n\Sigma)^{n-i} V_\perp \\
r_{5,i}&=(\vp\vp^T-\vorth\vorth^T) (I+\eta_n/n\Sigma)^{n-i}(Y_i-Y_{i-1})R_{i+1,n} V_\perp\\
r_{6,i}&=(\vp\vp^T-\vorth\vorth^T) (I+\eta_n/n\Sigma)^{n-i}(Y_i-Y_{i-1})(I+\eta_n/n\Sigma)^{n-i} V_\perp\\
}

First we will bound $\|r_{1,i}\|_F^2$. Recall the set $\ac_n$ where the maximum norm is bounded from~\ref{eq:highprobset}.


\ba{\label{eq:r1}
E_{1,i}&:=\int_{\mathcal{A}_n}\|r_{1,i}\|_F^2 dP\leq 2\alpha_n\int_{\mathcal{A}_n}\notag \|R_{1,i-1}\|^2\|R_{i+1,n}\|^2 dP\notag\\
&\leq 2\alpha_n\int \|R_{1,i}\|^2\|R_{i+1,n}\|^2 dP\leq 2\alpha_n\E\|R_{1,i}\|^2\E\|R_{i+1,n}\|^2
}
Similarly, 
\ba{\label{eq:r2}
E_{2,i}&=\int_{\mathcal{A}_n}\|r_{2,i}\|_F^2 dP\leq 2\alpha_n\left(1+\eta_n\lambda_2/n\right)^{2(n-i)}\E\|R_{1,i-1}\|^2
}
\ba{\label{eq:r3}
E_{3,i}&=\int_{\mathcal{A}_n}\|r_{3,i}\|_F^2 dP\leq 2\alpha_n\left(1+\eta_n\lambda_2/n\right)^{2(i-1)}\E\|R_{i+1,n}\|^2
}
Similarly,
\ba{\label{eq:r4}
E_{4,i}=\int_{\mathcal{A}_n}\|r_{4,i}\|_F^2 dP &\leq 2\alpha_n \left(1+\eta_n\lambda_2/n\right)^{2(n-1)}
}

Recall the set $\ac_{\sin}$ from Eq~\ref{eq:sinset}. With probability at least $1-\sinsqprob$,
\bas{
E_{5,i}&=\int_{\mathcal{A}_n\cap \ac_{\sin}}\|r_{5,i}\|_F^2 dP\leq 4\alpha_n\frac{\sinsqerr}{\sinsqprob}\left(1+\eta_n\lambda_1/n\right)^{2(i-1)}\E\|R_{i+1,n}\|^2\\
E_{6,i}&=\int_{\mathcal{A}_n\cap \ac_{\sin}}\|r_{6,i}\|_F^2 dP\leq 2\alpha_n\frac{\sinsqerr}{\sinsqprob}\left(1+\eta_n\lambda_1/n\right)^{2(i-1)}\left(1+\eta_n\lambda_2/n\right)^{2(n-i)}
}

Observe that, using Eq~\ref{eq:wardbounds}, we have, 
\bas{
\ce_1&:=\sum_i E_{1,i}\leq \frac{2\alpha_n e^2M_d^2(1+2\log d)^2\eta_n^4}{n} (1+\eta_n\lambda_1/n)^{2(n-1)}\\
\ce_2&:=\sum_i (E_{2,i}+E_{3,i})\leq \frac{4\alpha_n e M_d(1+2\log d)\eta_n^3}{n}\min\bb{\eta_n,\frac{1}{\lambda_1-\lambda_2}} (1+\eta_n\lambda_1/n)^{2n-1}\\
\ce_3&:=\sum_i E_{
4,i}\leq 2\alpha_n n \left(1+\eta_n\lambda_2/n\right)^{2n}\\ 
}
With probability at least $1-\sinsqprob$,
we have
\bas{
 \ce_4:=\sum_i E_{5,i}&\leq 4\alpha_n\frac{\sinsqerr}{\sinsqprob}eM_d(1+2\log d)\eta_n^2(1+\eta_n\lambda_1)^{2(n-1)} \\
\ce_5:=\sum_i E_{6,i}&\leq 2\alpha_n\frac{\sinsqerr}{\sinsqprob}\frac{n}{\eta_n}\min\bb{\eta_n,\frac{1}{\lambda_1-\lambda_2}}(1+\eta_n\lambda_1)^{2(n-1)}
}
If $\lambda_1M_d(\log d)^2\frac{\eta_n^2}{n}\rightarrow 0$, then $\ce_4\leq C_1\ce_5$ for some positive constant $C_1$. If $nd\exp(-2\eta_n(\lambda_1-\lambda_2))\rightarrow 0$, then $\ce_3\leq C_2 \ce_5$. 

Thus, under these conditions, 
\bas{
\ce_1,\ce_2\leq C_4 \ce_5
}

With probability at least $1-\sinsqprob$, for some positive constant $C'$,
\bas{
\frac{\sum_{i=1}^5\ce_i}{(1+\eta_n\lambda_1/n)^{2n}}\leq C'\alpha_n\frac{\sinsqerr}{\sinsqprob}
}

Finally, using Eq~\ref{eq:T1v1decomp} we get:
\ba{\label{eq:allbound}
 \frac{\int_{\ac_{\sin}\cap \ac_n}\E^*\|\vp\vp^T T_1^* V_\perp \|_F^2 dP}{(1+\eta_n\lambda_1/n)^{2n}}
 &\leq C''\alpha_n\frac{\eta_n^2}{n}\frac{\sinsqerr}{\sinsqprob}
 }

 Let $\ac_1$ denote the set where Eq~\ref{eq:normalized_higher} holds.  
\bas{
&P\bb{\bbs{\frac{n}{\eta_n}\frac{\|\vp\vp^T T_1^* \vorth\vorth^T u_0\|^2}{(1+\eta_n\lambda_1/n)^{2n}(v_1^T u_0)^2}\geq \epsilon}\cap \ac_n}\\
&\leq P\bb{\bbs{\frac{ \norm{\vp\vp^T T_1^* \vorth}_F^2 }{(1+\eta_n\lambda_1/n)^{2n} } \geq \frac{\epsilon \delta^2}{\log (1/\delta)}\frac{\eta_n}{n}}\cap \ac_n\cap \ac_1}+2\delta\\
&\leq P\bb{\bbs{\frac{ \norm{\vp\vp^T T_1^* \vorth}_F^2 }{(1+\eta_n\lambda_1/n)^{2n} } \geq \frac{\epsilon \delta^2}{\log (1/\delta)}\frac{\eta_n}{n}}\cap \ac_n\cap \ac_1\cap\ac_{\sin}}+2\delta+\sinsqprob\\
&\leq \E\left[\frac{\E^* \norm{\vp\vp^T T_1^* \vorth}_F^2 }{(1+\eta_n\lambda_1/n)^{2n}}\times \frac{\log(1/\delta)n}{\epsilon\delta^2\eta_n}1(\ac_n\cap \ac_1\cap\ac_{\sin})\right]+2\delta+\sinsqprob\\
&\stackrel{(i)}{\leq}
 \frac{C''\alpha_n\eta_n\log(1/\delta)}{\sinsqprob\delta^2}\frac{\sinsqerr}{\epsilon} +2\delta+\sinsqprob\\
 &\stackrel{(ii)}{\leq} \frac{C'''\alpha_nM_d\eta_n^2\log(1/\delta)}{n(\lambda_1-\lambda_2)\sinsqprob\delta^2}\frac{1}{\epsilon} +2\delta+\sinsqprob 
}
Step (i) follows from Eq~\ref{eq:allbound}. Step (ii) follows from the definition of $\sinsqerr$ in Eq~\ref{eq:sinsqerr}. Now setting $\sinsqerr=\delta$, we get the result.
\bk
\end{proof}

\section{Proof of  Proposition~\ref{prop:vardecay}}\label{sec:D}
\begin{proof}[Proof of Proposition~\ref{prop:vardecay}]
Since $\norm{X_{1j}}_{\psi_2} \leq \nu_j$ it follows that  $\norm{X_{1j}^2}_{\psi_1} \leq \nu_j^2$.  Observe that $(X_{1j}^2 - \mathbb{E}X_{1j}^2)/\nu_j^2$ is sub-Exponential with parameter at most 1 since $\norm{(X_{1j}^2 - \mathbb{E}[X_{1j}^2])/\nu_j^2 }_{\psi_1} \leq \norm{X_{1j}^2}_{\psi_1}/\nu_j^2 = 1$. 
By multivariate Holder inequality with $p_j = \sum_{j=1}^d\nu_j^2 / \nu_j^2$ and property (e) of Proposition 2.7.1 of \cite{verhsynin-high-dimprob}, for $|\lambda| < 1/(\sum_{i=1}^d \nu_i^2)$:  
\begin{align*}
 \ \mathbb{E}\left[\exp\left(\lambda\sum_{j=1}^d (X_{1j}^2 - \mathbb{E}[X_{1j}^2])  \right) \right]
& \leq  \prod_{j=1}^d \mathbb{E}\left[\exp\left(\lambda (X_{1j}^2  - \mathbb{E}[X_{1j}^2])  \right)^{\frac{\sum_{i=1}^d \nu_i^2}{\nu_j^2}} \right]^{\frac{\nu_j^2}{\sum_{i=1}^d \nu_i^2 }} \\ 
 & =  \prod_{j=1}^d \mathbb{E}\left[\exp\left( \frac{ \lambda (\sum_{i=1}^d \nu_i^2) (X_{1j}^2-\mathbb{E}[X_{1j}^2]) }{\nu_j^2} \right) \right]^{\frac{\nu_j^2}{\sum_{i=1}^d \nu_i^2}} 
\\ & \leq \ \prod_{j=1}^d \exp\left( \frac{ K\lambda^2  (\sum_{i=1}^d \nu_i^2)^2 \nu_j^2}{\sum_{i=1}^d \nu_i^2 } \right)
\\ & = \exp\left\{K \lambda^2 \left(\sum_{i=1}^d \nu_i^2\right)^2   \right\}
\end{align*}
Therefore, $\norm{\sum_{i=1}^d X_{1i}^2}_{\psi_1} \leq \sum_{i=1}^d \nu_i^2$.  Since a subexponential random variable  $T$ satisfy the tail condition:
\begin{align*}
P(T - \mathbb{E}[T]  > t) \leq \exp(-t/K \nu )     
\end{align*}
for another universal constant $K>0$, the second claim follows by a union bound and noting that $\mathbb{E}[\norm{X_1}_2^2] \leq \sum_{i=1}^d \nu_i^2 < C_2$ since absolute summability implies square summability.    
\end{proof}

\pagebreak

\bibliographystyle{abbrv}
\bibliography{references}

 \makeatletter\@input{zz.tex}\makeatother